\newtheorem{Theorem}{Theorem}[section]
\newtheorem{Definition}[Theorem]{Definition}
\newtheorem{Lemma}[Theorem]{Lemma}
\newtheorem{Corollary}[Theorem]{Corollary}
\newtheorem{Remark}[Theorem]{Remark}
\newtheorem{Hypothesis}{Hypothesis}
\numberwithin{equation}{section}
\theoremstyle{plain}
\newtheorem{thm}{\protect\theoremname}[section]
\theoremstyle{plain}
\newtheorem{prop}[thm]{\protect\propositionname}
\theoremstyle{plain}
\newtheorem{lem}[thm]{\protect\lemmaname}
\theoremstyle{remark}
\theoremstyle{plain}
\newtheorem{cor}[thm]{\protect\corollaryname}
\theoremstyle{remark}
\newtheorem*{rem*}{\protect\remarkname}
\def\r2{\mathbb{R}^2}
\def\le{\left}
\def\r{\right}
\def\a{\alpha}
\def\d{\delta}
\def\e{\epsilon}
\def\la{\lambda}
\def\si{\sigma}
\def\H{{\mathcal H}}
\def\s0t{\sup_{t \in [0,T]}}
\def\ds{\displaystyle}
\def\beq{\begin{equation}}
\def\eeq{\end{equation}}
\def\barr{\begin{array}}
\def\earr{\end{array}}
\def\vs{\vspace{.01mm}   \\}
\def\rd{\reals\,^{d}}
\newcommand{\nat}{\mathbb N}
\newcommand{\E}{\mathbb E}
\newcommand{\reals}{\mathbb R}
\newcommand{\norm}[1]{\left\| #1\right\|}
\newcommand{\inner}[1]{\langle #1\rangle}
\newcommand{\Inner}[1]{\big\langle #1\big\rangle}
\newcommand{\abs}[1]{\lvert #1\rvert}
\def\H{{\mathcal{H}}}
\def\E{{\mathbb{E}}}
\date{}
  \providecommand{\corollaryname}{Corollary}
  \providecommand{\lemmaname}{Lemma}
\providecommand{\theoremname}{Theorem}
\theoremstyle{plain}
\providecommand{\corollaryname}{Corollary}
\providecommand{\lemmaname}{Lemma}
\providecommand{\propositionname}{Proposition}
\providecommand{\remarkname}{Remark}
\providecommand{\theoremname}{Theorem}
\begin{document}
\global\long\def\divg{{\rm div}\,}%

\global\long\def\curl{{\rm curl}\,}%

\global\long\def\rt{\mathbb{R}^{3}}%

\global\long\def\rd{\mathbb{R}^{d}}%

\global\long\def\rtwo{\mathbb{R}^{2}}%

\global\long\def\e{\epsilon}%

\title{On the small noise limit in the Smoluchowski-Kramers approximation of nonlinear wave equations with variable friction}
\author{Sandra Cerrai\thanks{Department of Mathematics, University of Maryland, College Park, MD
20742, USA. Emails: cerrai@umd.edu, mxie2019@umd.edu}\ \,\thanks{Partially supported by NSF grants DMS-1712934 -  {\em Analysis of Stochastic Partial Differential Equations with Multiple Scales}  and  DMS-1954299 - {\em 
 Multiscale Analysis of Infinite-Dimensional Stochastic Systems}} \ and Mengzi Xie\footnotemark[1]}
\maketitle

\selectlanguage{english}

\date{}

\maketitle

\begin{abstract}
We study the validity of a large deviation principle for a class of  stochastic  nonlinear damped wave equations, of Klein-Gordon type, in the joint small mass and small noise limit. The friction term is assumed to be state dependent.

\vspace{.3cm}

{\em Key words}: Smoluchowski-Kramers approximation, Large deviations, stochastic nonlinear damped wave equations.
\end{abstract}

\section{Introduction}
\label{sec1}

In this article we deal with this class of stochastic wave equations with state-dependent damping on a bounded smooth domain $\mathcal{O}\subset \mathbb{R}^d$
\begin{equation}
\label{SPDE1}
\le\{\begin{array}{l}
\ds{\mu\partial_t^2 u_\mu(t,x)=\Delta u_\mu(t,x)- \gamma (u_\mu(t,x)) \partial_t u_\mu(t,x) + f(x,u_\mu(t,x))+ \si(u_\mu(t,\cdot))\partial_t w^Q(t,x),}\\[10pt]
\ds{u_\mu(0,x)=u_0(x),\ \ \ \ \partial_t u_\mu(0,x)=v_0(x),\ \ \ \ \ \ \  u_{\mu}(t,x)=0,\ \ x \in\,\partial \mathcal{O},}
\end{array}\r.
\end{equation}
depending on a parameter $0<\mu<<1$. Here the friction coefficient $\gamma$ is  strictly positive and bounded  and the nonlinearity $f$ is  either a Lipschitz-continuous  function (in this  case we can consider any $d\geq 1$) or a locally Lipschitz-continuous function  of the Klein-Gordon type (in this case we can only take $d=1$). The noise $w^Q(t)$ is a cylindrical $Q$-Wiener process and $\si$ is a suitable Lipschitz-continuous operator-valued function.

The solution $u_\mu$ of equation \eqref{SPDE1} can be seen  as the displacement field of some particles in a domain $\mathcal{O}$, subject to interaction forces represented by  the Laplacian and  to nonlinear reactions represented by $f$, in the presence of  a random external forcing $\si(u_\mu(t,\cdot))\partial_t w^Q(t)$ and a state-dependent friction $\gamma (u_\mu(t)) \partial_t u_\mu(t)$.  A series of papers has investigated the validity of the so-called Smoluchowski-Kramers approximation, that describes the limiting behavior of the solution $u_\mu$, as the  density $\mu$  of the particles vanishes. 
For the finite dimensional case, the existing literature is quite broad and  we refer in particular to \cite{f}, \cite{fh}, \cite{hhv}, \cite{hmdvw} and  \cite{spi} (see also \cite{CF3}, \cite{CWZ} and \cite{lee} for systems subject to a magnetic field and \cite{hu} and \cite{Nguyen} for some related multiscaling problems). 

In recent years there has been an intense activity dealing with the Smoluchowski-Kramers approximation of   infinite dimensional systems. To this purpose, we refer to \cite{CF1}, \cite{CF2}, \cite{salins} and \cite{Lv2} for the  case of constant damping term (see also \cite{CS3} where systems subject to a magnetic field are studied), and to \cite{CX} for the case of  state-dependent damping. As a matter of fact, these two situations are quite different. When $\gamma$ is constant,  $u_\mu$ converges to the solution of the stochastic parabolic problem  
\begin{equation}
\label{SPDE2}
\le\{\begin{array}{l}
\ds{\gamma\,\partial_t u(t,x)=\Delta u(t,x) + f(x,u(t,x))+ \si(u(t,\cdot))\partial_t w^Q(t,x),}\\[10pt]
\ds{u(0,x)=u_0(x),\ \ \ \ \partial_t u(0,x)=v_0(x),\ \ \ \ \ \ \  u_{\mu}(t,x)=0,\ \ x \in\,\partial \mathcal{O}.}
\end{array}\r.
\end{equation}
However, when $\gamma$ is not constant, because of the interplay between the state-dependent friction and  the noise, an extra drift is created and in \cite{CX} it has been proven that the  limiting equation becomes
\begin{equation}
\label{SPDE3}
\le\{\begin{array}{l}
\ds{\gamma(u(t,x))\partial_t u(t,x)= \Delta u(t,x) +f(u(t,x)) -\frac{\gamma'(u(t,x))}{2\gamma^2(u(t,x))} \sum_{i=1}^\infty |[\si(u(t,\cdot)Qe_i](x)|^2 +\si(u(t,\cdot))\partial_t w^Q(t,x),}\\[18pt]
\ds{u(0,x)=u_0(x), \ \ \ \ \ \ \  u(t)_{|_{\partial\mathcal{O}}}=0,}
\end{array}\r.
\end{equation}
where $\{Qe_i\}_{i \in\,\mathbb{N}}$ is a complete orthonormal basis of the reproducing kernel of the noise.

Once proved the validity of the small mass limit, it is important to understand how stable such an approximation is with respect to other important asymptotic features of the two systems, such as for example the long time behavior.  To this purpose,  in \cite{CGH} and \cite{CF1} it is shown that  the statistically invariant states of equation \eqref{SPDE1} (in case of constant friction) converge in a suitable sense to the invariant measure of  equation \eqref{SPDE2}. In the same spirit, the two papers  \cite{sal} and \cite{sal2} are devoted to an analysis of the convergence of the quasi-potential, that describes, as known, the asymptotics of the exit times and the large deviation principle for the invariant measure.

In the present paper we are interested in studying the validity of a large deviation principle for the following equation
\begin{equation}
\label{SPDE1-bis}
\le\{\begin{array}{l}
\ds{\mu\partial_t^2 u_\mu(t,x)=\Delta u_\mu(t,x)- \gamma (u_\mu(t,x)) \partial_t u_\mu(t,x) + f(x,u_\mu(t,x))+ \sqrt{\mu}\,\si(u_\mu(t,\cdot))\partial_t w^Q(t,x),}\\[10pt]
\ds{u_\mu(0,x)=u_0(x),\ \ \ \ \partial_t u_\mu(0,x)=v_0(x),\ \ \ \ \ \ \  u_{\mu}(t,x)=0,\ \ x \in\,\partial \mathcal{O},}
\end{array}\r.
\end{equation}
where, together with the mass, we are also assuming that  the intensity of the noise vanishes.
Our aim is proving that in the joint small mass and small noise limit the family of random variables $\{u_\mu\}_{\mu>0}$  satisfy a large deviation principle in the space $C([0,T];L^p(\mathcal{O}))$, (for some $p>2$ depending on the dimension $d$),  with respect to the action functional
\[I_T(u)=\frac 12\,\left\{\int_0^T \Vert \varphi(t)\Vert_H^2\,dt\ :\ u(t)=u^\varphi(t),\ t \in\,[0,T]\right\},\]
	where $u^\varphi(t)$ denotes the solution of  the controlled quasi-linear parabolic equation
	\begin{equation}
	\label{quasi}
	\left\{ \begin{array}{l}
 \ds{\gamma(u(t,x))\partial_t u(t,x)=\Delta u(t,x)+	f(x,u(t,x))+\sigma(u(t,\cdot))\varphi(t,x),}\\
 \vs
 \ds{u(0,x)=u_0(x),\ \ \ \ u(t,x)=0, \ \ \ x \in\,\partial \mathcal{O}.}
 \end{array}\right.
 \end{equation}

 This means in particular that, in spite of the fact  that in the presence of a non-constant friction coefficient the Smoluchowski-Kramers approximation of equation \eqref{SPDE1} leads to equation \eqref{SPDE3}, the large deviation principle is consistent with equation \eqref{SPDE2}. 
 
 Due to the nature of our problem, the weak-convergence approach to large deviation, as developed  in \cite{bdm} for SPDEs, is the ideal tool for our proof. As known, such an approach  requires a thorough  
analysis of the following controlled version of equation \eqref{SPDE1-bis}
\begin{equation}
	\label{fine}
\le\{\begin{array}{l}
\ds{\mu\partial_t^2 u_\mu(t,x)=\Delta u_\mu(t,x)- \gamma (u_\mu(t,x)) \partial_t u_\mu(t,x) + f(x,u_\mu(t,x))+\si(u_\mu(t,\cdot))Q\varphi(t,x)}\\
\vs
\ds{\quad\quad\quad\quad\quad\quad+ \sqrt{\mu}\,\si(u_\mu(t,\cdot))\partial_t w^Q(t,x),\ \ \ t>0,\ \ \ \ \ x \in\,\mathcal{O},}\\[10pt]
\ds{u_\mu(0,x)=u_0(x),\ \ \ \ \partial_t u_\mu(0,x)=v_0(x),\ \ \ \ \ \ \  u_{\mu}(t,x)=0,\ \ \ x \in\,\partial \mathcal{O},}
\end{array}\r.\end{equation}

 In \cite{CX}, only the case of Lipschitz $f$ and  bounded $\si$ is considered. However, in relevant models it is important to consider  non-Lipschitz nonlinearities. For this reason,   in this paper we are  considering also nonlinearities $f$ having polynomial growth and satisfying some monotonicity conditions. We would like to stress that this, together with the fact that we allow 
 the diffusion coefficients $\si$ to have  linear growth and  we have to add a control in equation \eqref{fine}, has required the introduction of  new arguments, compared with \cite{CX}, already in the proof of the well-posedness result. 

After we have shown that equation \eqref{fine} admits a unique solution $u_\mu^\varphi$, for every fixed $\mu>0$ and for every predictable control,   we have proven suitable a priori bounds for such solution and its time derivative. Then, we have introduced 
$\rho_\mu:=g(u_\mu^{\varphi_\mu})$, where $g^\prime=\gamma$ and $\{\varphi_\mu\}_{\mu>0}$ is a family of controls all contained $\mathbb{P}$-a.s. in a ball of $L^2(0,T;L^2(\mathcal{O}))$,  and we have shown that these estimates   imply the tightness of the family $\{\rho_\mu\}_{\mu \in\,(0,\mu_T)}$ in $C([0,T];H^{\d})$, for some $\mu_T>0$ and for every $\d<1$.

Next, we have shown how,  for every sequence $\{\mu_k\}_{k \in\,\mathbb{N}}$ converging to zero, every limit point $\rho$ of $\{\rho_{\mu_k}\}_{k \in\,\mathbb{N}}$ is a weak solution of the  
deterministic controlled problem
\[\left\{\begin{array}{l}
\ds{\partial_t \rho(t,x)=\text{div}\,[b(\rho(t,x))]+f_g(x,\rho(t,x))+\sigma_g(\rho(t,\cdot))\varphi(t,x),\ \ \ \ t>0,	\ \ \ \ \ x \in\,\mathcal{O},}\\
\vs\ds{\rho(0,x)=g(u_0(x)),\ \ \ \ \ \ \ \ \ \rho(t,x)=0,\ \ \ x \in\,\partial \mathcal{O},}
\end{array}\right.\]
where $b=1/\gamma \circ g^{-1}$, $f_g=f\circ g^{-1}$, and $\si_g=\si\circ g^{-1}$. 
In order to identify uniquely the limit point and prove that $\{\rho_{\mu_k}\}_{k \in\,\mathbb{N}}$ converges to $\rho$, we had to extend some results proved in \cite{liu} about  nonlinear evolution equations with locally monotone coefficients  to prove that the equation above has a unique solution.
Then, by defining $u:=g^{-1}(\rho)$, we have obtained the convergence of $u_\mu^{\varphi_\mu}$ to the solution of the controlled  equation \eqref{fine} and this has allowed us to conclude our proof.

Finally, we would like to mention that in Appendix \ref{A1} we have extended the results of \cite{CX} and provided a proof of the validity of the Smoluchowski-Kramers approximation for quasi-monotone $f$ having polynomial growth and unbounded diffusion $\sigma$ (see Hypothesis \ref{Hypothesis3}). This has required the proof of quite non-trivial a-priori bounds for the solution $u_\mu$ and its time derivative $\partial_t u_\mu$, and the introduction of suitable functional spaces where tightness holds and the small-mass limit can be proven.

\section{Notations and assumptions}
\label{sec2}
Throughout the present paper $\mathcal{O}$ is a bounded domain in $\mathbb{R}^d$, with smooth boundary. We denote by $H$ the Hilbert space $L^2(\mathcal{O})$ and by $\langle \cdot , \cdot \rangle_H$ the corresponding  inner product.  $H^1$ is the completion of $C_0^\infty(\mathcal{O})$ with respect to norm 
\[
\Vert u \Vert_{H^1}^2:=\Vert \nabla u \Vert_H^2=\int_\mathcal{O} \vert \nabla u(x)\vert^2 dx,
\]
and $H^{-1}$ is the dual space to $H^1$. Then $H^1$, $H$ and $H^{-1}$ are all complete separable metric spaces, and  $H^1\subset H\subset H^{-1}$,
with compact embeddings. In what follows,  we shall denote  
\begin{equation*}
    \mathcal{H}=H\times H^{-1},\ \ \ \ \  \mathcal{H}_1=H^1 \times H. 
\end{equation*}

Given the domain $\mathcal{O}$, we denote by $\{e_i\}_{i\in\mathbb{N}}\subset H^1$ the complete orthonormal basis of $H$ which diagonalizes the Laplacian $\Delta$, endowed with Dirichlet boundary conditions on $\partial\mathcal{O}$. Moreover, we denote by $\{-\alpha_i\}_{i\in \mathbb{N}}$ the corresponding sequence of eigenvalues, i.e.
 \[\Delta e_i=-\alpha_i e_i,\ \ \ \  i\in \mathbb{N}.\]
 % Given 
 % \[u=\sum_{i=1}^\infty b_i e_i,\ \ \ \  v=\sum_{i=1}^\infty c_i e_i,\]
 % for some sequences of real numbers $(b_i)_{i\in\,\mathbb{N}}$ and $(c_i)_{i\in\,\mathbb{N}}$,
%we have
%\begin{equation}\label{InnerProducts}
  %  \langle u,v \rangle_{H^1}=\sum_{i=1}^\infty \alpha_i b_i c_i,\quad \langle u,v \rangle_{H}=\sum_{i=1}^\infty b_i c_i,\quad \langle u,v\rangle_{H^{-1}}=\sum_{i=1}^\infty \frac{1}{\alpha_i} b_i c_i.
%\end{equation}
%From \eqref{InnerProducts} we can derive the Poincar\'e inequality
%\begin{equation}\label{Poincare}
   % \Vert u\Vert _{H}\leq \frac{1}{\sqrt{\alpha_1}}\Vert u\Vert _{H^1},\ \ \ \ u\in H^1,\ \ \ \ \  \Vert u\Vert _{H^{-1}}\leq \frac{1}{\sqrt{\alpha_1}} \Vert u\Vert _H,\ \ \ \ u\in H.
%\end{equation}

\subsection{The stochastic term}

We assume that $w^Q(t)$ is a cylindrical $Q$-Wiener process, defined on a complete stochastic basis  $(\Omega,\mathcal{F},(\mathcal{F}_t)_{t\geq 0},\mathbb{P})$. This means that $w^Q(t)$ can be formally written as 
\[
w^Q(t)=\sum_{i=1}^\infty Q e_i \beta_i(t),
\]
where $\{\beta_i\}_{i\in \mathbb{N}}$ is a sequence of independent standard Brownian motions on $(\Omega,\mathcal{F},(\mathcal{F}_t)_{t\geq 0},\mathbb{P})$,   $\{e_i\}_{i\in\,\mathbb{N}}$ is the complete orthonormal system introduced above that diagonalizes the Laplace operator, endowed with Dirichlet boundary conditions, and $Q:H\rightarrow H$ is a bounded  linear operator,. 
When $Q=I$, $w^I(t)$ will be denoted by $w(t)$. In particular, we have $w^Q(t)=Qw(t)$.

In what follows we shall denote by $H_Q$ the set $Q(H)$. $H_Q$ is the reproducing kernel of the noise $w^Q$ and  is a Hilbert space, endowed with the inner product
\[\langle Qh, Qk\rangle_{H_Q}=\langle h,k\rangle_H,\ \ \ \ h, k \in\,H.\] Notice that the sequence $\{Q e_i\}_{i \in\,\mathbb{N}}$ is a complete orthonormal system in $H_Q$. Moreover, if $U$ is any Hilbert space containing $H_Q$ such that the embedding  of $H_Q$ into $U$ is Hilbert-Schmidt, we have that 
\begin{equation}
  \label{contb}w^Q \in\,C([0,T];U).
\end{equation}

Next, we recall that for every two separable Hilbert spaces $E$ and $F$, $\mathcal{L}_2(E,F)$ denotes the space of Hilbert-Schmidt operators from $E$ into $F$. $\mathcal{L}_2(E,F)$ is a Hilbert space, endowed with the inner product
\[\langle A,B\rangle_{\mathcal{L}_2(E,F)}=\mbox{Tr}_E\,[A^\star B]=\mbox{Tr}_F[B A^\star].\]
As well known, $\mathcal{L}_2(E,F) \subset \mathcal{L}(E,F)$ and
\begin{equation}
\label{sm5}
\Vert A\Vert_{\mathcal{L}(E,F)}\leq 	\Vert A\Vert_{\mathcal{L}_2(E,F)}.
\end{equation}

\begin{Hypothesis}
\label{Hypothesis1}
The mapping $\si:H\to \mathcal{L}_2(H_Q,H)$ is defined by 
\[[\sigma(h)Qe_i](x) = \sigma_i(x,h(x)), \ \ \ \ x \in\,\mathcal{O}\ \ \ \ \ h \in\,H,\ \ \ \ i \in\,\mathbb{N},\] 
for some mapping $\sigma_i:\mathcal{O}\times \mathbb{R}\rightarrow \mathbb{R}$. We assume that there exists $L>0$ such that
\begin{equation}
\label{sgfine1}	
\sup_{x \in\,\mathcal{O}}\, \sum_{i=1}^\infty \vert \sigma_i(x,y_1) - \sigma_i(x,y_2)\vert^2 \leq L\,\vert y_1-y_2\vert^2,\ \ \ \ \ y_1, y_2 \in\,\mathbb{R}.
\end{equation}
Moreover,
\begin{equation}
\label{sm1}
\sup_{x \in\,\mathcal{O}}\, \sum_{i=1}^\infty \vert \sigma_i(x,0)\vert^2=:\si_0^2<\infty.
    \end{equation}

\end{Hypothesis}

\begin{Remark}
{\em 
\begin{enumerate}
\item Condition \eqref{sgfine1} implies that $\si$ is Lipschitz continuous. Namely
 for any $h_1,h_2\in H$
\begin{equation}
\label{sm2}
    \Vert\si(h_1)-\si(h_2)\Vert_{\mathcal{L}_2(H_Q,H)} \leq \sqrt{L}\, \Vert h_1 -h_2\Vert_H.
\end{equation}
This, together with condition \eqref{sm1}, implies also that $\si$ has linear growth,  that is
\begin{equation}
\label{sm4}
\Vert\si(h)\Vert_{\mathcal{L}_2(H_Q,H)} \leq \sqrt{L}\, \Vert h\Vert_H+|\mathcal{O}|^{1/2}\si_0.
\end{equation}
\item If $\si$ is constant,  then Hypothesis \ref{Hypothesis1} means that $\si Q$ is a Hilbert-Schmidt operator in $H$. \item If $\si$ is not constant,  Hypothesis \ref{Hypothesis1} is satisfied for example when
\[ [\si(h)Qk](x)=s(x,h(x))Qk(x),\ \ \ \ x \in\,\mathcal{O}, \ \ \ \ h, k \in\,H,\]
for some measurable function $s:\mathcal{O}\times \mathbb{R}\to \mathbb{R}$  such that $s(x,\cdot):\mathbb{R}\to \mathbb{R}$ is Lipschitz continuous, uniformly with respect to $x \in\,\mathcal{O}$, and for some $Q \in\,\mathcal{L}(H)$ such that
\begin{equation}
\label{sm6}
\sum_{i=1}^\infty \Vert Qe_i\Vert^2_{L^\infty(\mathcal{O})}<\infty.	
\end{equation}
In case $Q$ is diagonalizable with respect the basis $(e_i)_{i \in\,\mathbb{N}}$, with $Q e_i=\la_i e_i$,
condition \eqref{sm6} reads
\begin{equation}\label{gx005}
    \sum_{i=1}^\infty \la_i^2\Vert e_i\Vert^2_{L^\infty(\mathcal{O})}<\infty.
\end{equation}
In general (see \cite{greiser}), we have 
\[\Vert e_i\Vert_{L^\infty(\mathcal{O})} \leq c\, i^\alpha,\]
for some $\alpha>0$, and \eqref{gx005} becomes
\[\sum_{i=1}^\infty \la_i^2 \, i^{2\alpha}<\infty.\]
In particular, when $d=1$ or the domain is a hyperrectangle when $d>1$ the eigenfunctions $(e_i)_{i\in\mathbb{N}}$ are equi-bounded and \eqref{gx005} becomes $Q \in\,\mathcal{L}_2(H).$
\end{enumerate}
}
\end{Remark}

\subsection{The coefficients $\gamma$ and $f$}
Throughout the paper, we shall assume that the friction coefficient satisfies the following condition

\begin{Hypothesis}\label{Hypothesis2}
The mapping $\gamma$ belongs to $C^1_b(\mathbb{R})$ and there exist $\gamma_0$ and $\gamma_1$ such that
\begin{equation}
\label{nonlinearity assumption}
0<\gamma_0\leq \gamma( r)\leq \gamma_1,\ \ \ \ \ \  r\in\mathbb{R}.
\end{equation}

\end{Hypothesis}

In what follows, we shall define
\[g(r)=\int_0^r \gamma(\si)\,d\si,\ \ \ \ \ r \in\,\mathbb{R}.\]
\begin{Remark}
{\em \begin{enumerate}
\item Clearly $g(0)=0$ and $g^\prime(r)=\gamma(r)$. In particular, due to  \eqref{nonlinearity assumption},  $g$ is uniformly Lipschitz continuous on $\mathbb{R}$. 

\item The  function $g$ is strictly increasing and
\[
\le(g(r_1)-g(r_2)\r)(r_1-r_2)\geq \gamma_0\,|r_1-r_2|^2,\ \ \ \ \ r_1, r_2 \in\,\mathbb{R}.
\]

 \end{enumerate}
}
\end{Remark}

\bigskip

As far as the nonlinearity $f$ is concerned, in this paper we shall consider two situations: $f$ is Lipschitz continuous and $\mathcal{O}$ is a bounded smooth domain in $\mathbb{R}^d$, for any arbitrary $d\geq 1$, or $f$ is only locally Lipschitz continuous with  polynomial growth and $\mathcal{O}$ is a bounded interval in $\mathbb{R}$.	
 
 \begin{Hypothesis}
\label{Hypothesis3-bis}
The mapping $f:\mathcal{O}\times \mathbb{R}\to \mathbb{R}$ is measurable and there exists $c>0$ such that
\[\sup_{x \in\,\mathcal{O}}|f(x,r)-f(x,s)|\leq c\,|r-s|,\ \ \ \ r, s \in\,\mathbb{R}.\]
Moreover 
\[\sup_{x \in\,\mathcal{O}}|f(x,0)|<\infty.\]
\end{Hypothesis}
In what follows, for every function $u:\mathcal{O}\to\reals$, we shall denote
\[F(u)(x)=f(x,u(x)),\ \ \ \ \ x \in\,\mathcal{O}.\]

\begin{Hypothesis}
\label{Hypothesis3}
We have $\mathcal{O}=[0,L]$ and the mapping $f:[0,L]\times\mathbb{R}\to\mathbb{R}$ is  measurable and satisfies the following conditions.
\begin{enumerate}
	\item There exist $\theta>1$ and  $c_1>0$ such that for every $r \in\,\mathbb{R}$
\begin{equation}
\label{sm9}
\sup_{x \in\,[0,L]}|f(x,r)|\leq c_1\left(1+|r|^\theta\right),\ \ \ \ \ \sup_{x \in\,[0,L]}|\partial_r f(x,r)|\leq c_1\left(1+|r|^{\theta-1}\right).
\end{equation}

\smallskip

Moreover, there exists  $c_2>0$ such that for every $r \in\,\mathbb{R}$ and $x \in\,[0,L]$
\begin{equation}
\label{sm12}
\mathfrak{f}(x,r):=\int_0^r f(x,s)\,ds\leq c_2 \left(1-|r|^{\theta+1}\right).\end{equation}
\item For every $x \in\,[0,L]$, the function $f(x,\cdot):\mathbb{R}\to\reals$ is differentiable and 
\begin{equation}
\label{sm11}
\sup_{(x,r) \in\,[0,L]\times \reals}\partial_r f(x,r)\leq 0.
	\end{equation}
\item	For every $r \in\,\mathbb{R}$, the function $f(\cdot,r):[0,L]\to\reals$ is differentiable and 
\[
	\sup_{x \in\,[0,L]}\left|\partial_x f(x,r)\right|\leq c\left(1+|r|\right),\ \ \ r \in\,\mathbb{R}.
\] 

\end{enumerate}

\end{Hypothesis}

%In what follows, for every function $u:\mathcal{O}\to\mathbb{R}$ we shall define
%\[F(u)(x)=f(u(x)),\ \ \ \ x \in\,\mathcal{O}.\]

\begin{Remark}
	\label{sm8}
	{\em   \begin{enumerate}
   \item A typical example of a function $f$ satisfying Hypothesis \ref{Hypothesis3} is
   \[f(r)=-a\,|r|^{\theta-1}r.\]

%   \item In fact, our result holds even in the more general case condition \eqref{sm11} is replaced by the more general condition
 %  \[
%\left(f(r)-f(s)\right)(r-s)\leq c\,\left(r-s\right)^2,\ \ \ \ r,s \in\,\mathbb{R},
	%\]in
	%for some $c \in\,\mathbb{R}$. Here we are assuming \eqref{sm11} just for simplicity of presentation.
	
   \item When $d=1$, we have that $H^1\hookrightarrow L^\infty(\mathcal{O})$, and then $F(u)\in\,H$, for every $u \in\,H^1$.
   \item We are assuming \eqref{sm11} just for the sake of simplicity. In fact, our results remain true under the condition
   \[\sup_{(x,r) \in\,[0,L]\times \reals}\partial_r f(x,r)<\infty.\]
   
   \item From \eqref{sm12} and \eqref{sm11}, it is not hard to show that for every $r\in\mathbb{R}$
   \begin{equation}
   	\label{sm145}
   	\sup_{x\in[0,L]}rf(x,r)\leq c_{2}\Big(1-\abs{r}^{\theta+1}\Big).
   \end{equation}
   
   Indeed, if we consider the function 
   \begin{equation*}
   	G(x,r):=\mathfrak{f}(x,r)-rf(x,r),\ \ \ r\in\mathbb{R},\ x\in[0,L],
   \end{equation*}
	
	then for every $x\in[0,L]$, $\partial_{r}G(x,r)=-r\partial_{r}f(x,r)\geq0$ if $r>0$, and $\partial_{r}G(x,r)\leq0$ if $r<0$. Note that $G(x,0)=0$, we have $G(x,r)\geq0$, and thus \eqref{sm145} follows from \eqref{sm12}.

 \item Thanks to \eqref{sm11} we have
 \[
 \langle F(u)-F(v),u-v\rangle_H\leq 0,\ \ \ \ \ u, v \in\,H^1.
 \]
 In particular, there exists some $c>0$ such that
 \begin{equation}
 \label{sm100}
 \langle F(u),u\rangle_H\leq c\,\Vert u\Vert_{H},\ \ \ \  \ u \in\,H^1.	
 \end{equation}

\item Due to \eqref{sm9}, for every $u, v \in\,H^1$, we have
\[\Vert F(u)-F(v)\Vert_H^2 \leq c \int_\mathcal{O}\left(1+|u(x)|^{2(\theta-1)}+|v(x)|^{2(\theta-1)}\right)|u(x)-v(x)|^2\,dx,
\]
so that 
\begin{equation}
\label{sm15}
\begin{array}{ll}
\ds{\Vert F(u)-F(v)\Vert_H} & \ds{\leq c\left(1+\Vert u\Vert _{H^1}^{\theta-1}+	\Vert v\Vert _{H^1}^{\theta-1}\right)\Vert u-v\Vert_{H}.}
\end{array}
\end{equation}
In particular, we have
\[
 \Vert F(u) \Vert_H\leq c\left(1+\Vert u\Vert_{H^1}^\theta\right),\ \ \ \ u \in\,H^1.
 \]
 \item In the same way
\begin{equation}
\label{sm15-l1}
\begin{array}{ll}
\ds{\Vert F(u)-F(v)\Vert_{L^1}\leq }  &  \ds{c \int_\mathcal{O}\left(1+|u(x)|^{\theta-1}+|v(x)|^{\theta-1}\right)|u(x)-v(x)|\,dx}\\
& \vs
& \ds{\leq c\,\left(1+\Vert u\Vert_{L^{2(\theta-1)}}^{\theta-1}+\Vert v\Vert_{L^{2(\theta-1)}}^{\theta-1}\right)\Vert u-v\Vert_H.}
\end{array}
\end{equation}

   \end{enumerate}}
\end{Remark}

Now, by proceeding as in \cite{mm} (see also \cite{CF2}), for every $n \in\,\mathbb{N}$ and $x \in\,\mathcal{O}$ we define
\begin{equation}
\label{sm300}
f_{n}(x,r):=\begin{cases}
\ds{f(x,n)+(r-n)\partial_r f(x,n),}	&  \ds{\text{if}\ r\geq n,}\\
& \vs
\ds{f(x,r),} & \ds{\text{if}\ r \in\,[-n,n],}\\
&\vs
\ds{f(x,-n)+(r+n)\partial_r f(x,-n),}  &  \ds{\text{if}\ r\leq -n.}
\end{cases}	
\end{equation}
Clearly, for every $n \in\,\mathbb{N}$, the mapping $f_n(x,\cdot):\reals\to \reals$ is Lipschitz continuous, uniformly with respect to $x \in\,[0,L]$, and 
\[f_n(x,r)=f(x,r),\ \ \ \ x \in\,[0,L],\ \ r \in\,[-n,n].\] 
Moreover,
\begin{equation}
\label{sm9-n}
\sup_{x \in\,[0,L]}|f_n(x,r)|\leq c\left(1+|r|^\theta\right),\ \ \ \ \ \sup_{x \in\,[0,L]}|\partial_r f_n(x,r)|\leq c\left(1+|r|^{\theta-1}\right),
\end{equation}
for some constant $c$ independent of $n$, and there exists $n_0 \in\,\mathbb{N}$ such that for every $n\geq n_0$
\begin{equation}
\label{sm83}	\mathfrak{f}_n(x,r):=\int_0^r f_n(x,s)\,ds\leq c\left(1-n^{\theta+1}\right),\ \ \ \ r \in\,\reals,\ \ \ \ x \in\,[0,L].
\end{equation}
.

\section{The problem and the method}
\label{sec3}

As we mentioned in the introduction, we are interested in the study of the validity of a large deviation principle, as $\mu\downarrow 0$ for the family $\{\mathcal{L}(u_\mu)\}_{\mu >0}$, where $u_\mu$ is the solution of  equation \eqref{SPDE1}. Our final goal is proving the following result.

\begin{Theorem}
	\label{mainteo}
	Assume Hypotheses \ref{Hypothesis1} and \ref{Hypothesis2} and either Hypothesis \ref{Hypothesis3-bis} or Hypothesis \ref{Hypothesis3} and fix $p<\infty$, if $d=1,2$, and $p<2d/(d-2)$, if $d>2$. Then, for every $(u_0,v_0) \in\,\mathcal{H}_1$ and  $T>0$, the family $\{\mathcal{L}(u_\mu)\}_{\mu >0}$ satisfly a large deviation principle in $C([0,T];L^p(\mathcal{O}))$, as $\mu\downarrow 0$, with action functional
	\begin{equation}
	\label{sm17}	
	I_T(u)=\frac 12\,\left\{\int_0^T \Vert \varphi(t)\Vert_H^2\,dt\ :\ u(t)=u^\varphi(t),\ t \in\,[0,T]\right\},
	\end{equation}
	where $u^\varphi(t)$ denotes the unique weak solution to the quasi-linear parabolic equation
	\begin{equation}
	\label{sm18}
	\left\{ \begin{array}{l}
 \ds{\partial_t u(t,x)=\gamma^{-1}(u(t,x))\left[\Delta u(t,x)+	f(x,u(t,x))+\sigma(u(t,\cdot))\varphi(t,x)\r],}\\
 \vs\ds{u(0,x)=u_0(x),\ \ \ \ u(t,x)=0, \ \ \ x \in\,\partial \mathcal{O}.}
 \end{array}\right.
	\end{equation}

\end{Theorem}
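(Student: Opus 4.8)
The plan is to establish the Laplace principle, which on the Polish space $C([0,T];L^p(\mathcal{O}))$ is equivalent to the LDP with good rate function $I_T$, via the weak-convergence approach of Budhiraja--Dupuis--Maroulas \cite{bdm}. Since here the noise intensity $\sqrt\mu$ and the mass $\mu$ vanish \emph{simultaneously}, the solution map of \eqref{fine} depends on $\mu$ both through the noise scaling and through the singular term $\mu\partial_t^2$; accordingly I would reduce the LDP to two conditions involving the same limit passage. The first condition, yielding goodness of $I_T$ and the Laplace upper bound, is that for every $N$, if $\varphi_n\rightharpoonup\varphi$ weakly in the ball $S_N=\{\varphi\in L^2(0,T;H):\int_0^T\|\varphi(t)\|_H^2\,dt\le N\}$, then the skeleton solutions $u^{\varphi_n}$ of \eqref{sm18} converge to $u^{\varphi}$ in $C([0,T];L^p(\mathcal{O}))$, so that the level sets $\{u^\varphi:\varphi\in S_N\}$ are compact. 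The second condition, yielding the Laplace lower bound, is that for any family $\{\varphi_\mu\}$ of predictable controls lying $\mathbb{P}$-a.s. in $S_N$ and converging in distribution (weakly in $S_N$) to $\varphi$, the solution $u_\mu^{\varphi_\mu}$ of \eqref{fine} converges in distribution to $u^\varphi$ in $C([0,T];L^p(\mathcal{O}))$. The first is the deterministic ($\mu=0$, no noise) specialization of the second, so I would set up the machinery once.

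First I would prove well-posedness of \eqref{fine} for each fixed $\mu>0$ and each predictable $\varphi$ with $\int_0^T\|\varphi\|_H^2<\infty$ a.s., and of the skeleton \eqref{sm18} for each $\varphi\in L^2(0,T;H)$; under Hypothesis \ref{Hypothesis3} this goes beyond the Lipschitz theory of \cite{CX} and exploits the monotonicity \eqref{sm11}--\eqref{sm12} together with the linear growth \eqref{sm4}, in the spirit of the locally monotone framework of \cite{liu}. Writing $v_\mu=\partial_t u_\mu^{\varphi_\mu}$, I would then derive energy estimates uniform over $\mu\in(0,\mu_T)$ and over controls in $S_N$, controlling $\|u_\mu^{\varphi_\mu}\|_{H^1}$, the kinetic energy $\sqrt\mu\,\|v_\mu\|_H$, and the potential energy associated with $\mathfrak f$; the monotone reaction and the linear growth of $\sigma$ are what close these bounds in the presence of both the control term and the polynomial nonlinearity. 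The decisive device is the change of variable $\rho_\mu=g(u_\mu^{\varphi_\mu})$: since $t\mapsto u_\mu^{\varphi_\mu}(t)$ is absolutely continuous with derivative $v_\mu$ (the noise enters only through $\partial_t v_\mu$, so $u_\mu$ carries no quadratic variation), the chain rule holds classically, $\partial_t\rho_\mu=\gamma(u_\mu)v_\mu$, with no It\^o correction, and substituting the equation for $v_\mu$ gives
\[\partial_t\rho_\mu=\Delta\,g^{-1}(\rho_\mu)+f_g(\cdot,\rho_\mu)+\sigma_g(\rho_\mu)Q\varphi_\mu+\sqrt\mu\,\sigma_g(\rho_\mu)\partial_t w^Q-\mu\,\partial_t v_\mu,\]
where $\Delta\,g^{-1}(\rho)=\mathrm{div}\,[b(\rho)\nabla\rho]$ is a quasilinear, porous-medium-type operator with $b=1/\gamma\circ g^{-1}$, and $f_g=f\circ g^{-1}$, $\sigma_g=\sigma\circ g^{-1}$. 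The a priori bounds then yield tightness of $\{\rho_\mu\}$ in $C([0,T];H^\delta)$ for every $\delta<1$.

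For a sequence $\mu_k\downarrow 0$ with $\varphi_{\mu_k}\rightharpoonup\varphi$ I would pass to the limit in the weak formulation of the $\rho$-equation. Two terms vanish: the stochastic integral, being $O(\sqrt{\mu_k})$ by the small-noise scaling, and the singular term, since $\mu_k\,\partial_t v_{\mu_k}$ integrated by parts against a test function is controlled by $\sqrt{\mu_k}\,\|v_{\mu_k}\|_H$, hence $O(\sqrt{\mu_k})$. This is precisely why the It\^o--Stratonovich correction $-\tfrac{\gamma'}{2\gamma^2}\sum_i|[\sigma Qe_i]|^2$ present in the genuine Smoluchowski--Kramers limit \eqref{SPDE3} is \emph{absent} here: that correction is generated by an $O(1)$ stochastic integral, whereas under the large-deviations scaling the stochastic integral disappears, so the skeleton reverts to the naive form \eqref{sm18}. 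Passing to the limit in the quasilinear diffusion, in the polynomial reaction $f_g$, and in the product $\sigma_g(\rho_{\mu_k})Q\varphi_{\mu_k}$ of a strongly convergent factor with a weakly convergent control requires the strong convergence of $\rho_{\mu_k}$ supplied by the $C([0,T];H^\delta)$ tightness and the compact embeddings $H^1\subset H^\delta\subset H$, together with $H^\delta\hookrightarrow L^p$; the limit $\rho$ is a weak solution of $\partial_t\rho=\Delta\,g^{-1}(\rho)+f_g(\cdot,\rho)+\sigma_g(\rho)Q\varphi$. Finally, extending the uniqueness theory of \cite{liu} to this equation — whose coefficients are only locally monotone because of the superlinear term — identifies the limit uniquely, so the whole family converges; setting $u^\varphi:=g^{-1}(\rho)$ recovers a solution of \eqref{sm18}, and since $g^{-1}$ is globally Lipschitz ($\gamma\ge\gamma_0>0$) the convergence of $\rho_{\mu_k}$ transfers to $u_{\mu_k}^{\varphi_{\mu_k}}$ in $C([0,T];L^p(\mathcal{O}))$, establishing both conditions and hence the LDP.

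I expect the main obstacle to be the simultaneous management of the three difficulties — the singular degeneration $\mu\partial_t^2$, the non-Lipschitz, polynomially growing reaction, and the merely weak convergence of the controls — which is exactly what forces the change of variable $\rho_\mu=g(u_\mu)$ to recover a parabolic structure with enough compactness to pass to the limit in the nonlinear terms. The hardest analytic points are the uniform (in $\mu$ and in the control) bounds on $v_\mu$ that render both the singular term and the stochastic integral negligible, and the uniqueness of the limiting quasilinear equation, where the porous-medium-type diffusion $\mathrm{div}\,[b(\rho)\nabla\rho]$ combined with the superlinear reaction places the problem outside classical monotone-operator theory and necessitates the local-monotonicity extension.
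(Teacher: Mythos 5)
Your proposal is correct and follows essentially the same route as the paper: the Budhiraja--Dupuis--Maroulas weak-convergence reduction to the two conditions (compact level sets and convergence in distribution of $u_\mu^{\varphi_\mu}$ to $u^\varphi$), well-posedness of the controlled equation via monotonicity arguments, uniform energy estimates, the change of variable $\rho_\mu=g(u_\mu^{\varphi_\mu})$ with tightness in $C([0,T];H^\delta)$, identification of limit points as solutions of the quasilinear controlled parabolic equation, uniqueness via an extension of the locally monotone framework of \cite{liu}, and transfer back through the Lipschitz map $g^{-1}$. Your observation that the It\^o correction of \eqref{SPDE3} disappears because the stochastic integral is $O(\sqrt{\mu})$ under this scaling, and that compactness of the level sets is the deterministic specialization of the convergence condition, matches exactly how the paper argues.
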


Theorem \ref{mainteo} is proved by following  the classical weak convergence approach to large deviations, as developed for SPDEs in \cite{bdm}. To this purpose, we need first to introduce  some notations. For every $T>0$, we denote by $\mathcal{P}_T$ the set of predictable processes in $L^2(\Omega\times [0,T];H)$, and for every $M>0$ we introduce the sets
\[\mathcal{S}_{T,M}:=\left\{ \varphi \in\,L^2_w(0,T;H)\ :\ \Vert \varphi\Vert_{L^2(0,T;H)}\leq M\right\},\]
and
\[\Lambda_{T,M}:=\left\{ \varphi \in\,\mathcal{P}_T\ :\ \varphi \in \mathcal{S}_{T,M},\ \mathbb{P}-\text{a.s.}\right\}.\]

Next, for every $\varphi \in\,\mathcal{P}_T$ we consider the controlled version of equation \eqref{SPDE1}
\begin{equation}
\label{SPDE-controlled}
\le\{\begin{array}{l}
\ds{\mu\partial_t^2 u_\mu(t,x)=\Delta u_\mu(t,x)- \gamma (u_\mu(t,x)) \partial_t u_\mu(t,x) + f(x,u_\mu(t,x))+\si(u_\mu(t,\cdot))Q\varphi(t,x)}\\
\vs
\ds{\quad\quad\quad\quad\quad\quad+ \sqrt{\mu}\,\si(u_\mu(t,\cdot))\partial_t w^Q(t,x),\ \ \ t>0,\ \ \ \ \ x \in\,\mathcal{O},}\\[10pt]
\ds{u_\mu(0,x)=u_0(x),\ \ \ \ \partial_t u_\mu(0,x)=v_0(x),\ \ \ \ \ \ \  u_{\mu}(t,x)=0,\ \ \ x \in\,\partial \mathcal{O},}
\end{array}\r.
\end{equation}
The well-posedness of the equation above has been proven in \cite{CX} in the case the nonlinearity $f$ is Lipschitz-continuous, the diffusion coefficient $\si$ is bounded and the control $\varphi=0$.
In what follows, we will prove that also under the more general conditions we are assuming for $f$ and $\si$, the following results holds.

\begin{Theorem}
\label{well-posedness}
Under Hypotheses \ref{Hypothesis1} and \ref{Hypothesis2} and either Hypothesis \ref{Hypothesis3-bis} or Hypothesis \ref{Hypothesis3}, for every $T, M>0$ and $\varphi \in\,\Lambda_{T,M}$ and for every initial conditions $(u_0,v_0) \in\,\mathcal{H}_1$, there exists a unique adapted process $(u_\mu,v_\mu) \in\,L^2(\Omega,C([0,T];\mathcal{H}_1))$ which solves the systems of equations
\begin{equation}
\label{sm20}
\left\{\begin{array}{l}
\ds{u_\mu(t,x)=u_0(x)+\int_0^t v_\mu(s,x)\,ds,}\\
\vs
\ds{\mu\,v_\mu(t,x)=\mu\,v_0(x)+\int_0^t\left[\Delta u_\mu(s,x)-\gamma(u_\mu(s,x))v_\mu(s,x)+f(x,u_\mu(s,x))+\sigma(u_\mu(s))Q\varphi(s,x)\right]\,ds}\\
\vs
\ds{\quad\quad\quad\quad\quad+\sqrt{\mu}\int_0^t \si(u_\mu(s))\,dw^Q(s).}
\end{array}\right.
	\end{equation}
 	
\end{Theorem}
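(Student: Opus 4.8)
The plan is to set $v_\mu=\partial_t u_\mu$ and read \eqref{sm20} as an abstract stochastic evolution equation for the pair $(u_\mu,v_\mu)$ in $\mathcal{H}_1=H^1\times H$, driven by the $C_0$-group generated by the wave operator associated with $\partial_t u=v,\ \mu\,\partial_t v=\Delta u$, and to reduce the locally Lipschitz case (Hypothesis \ref{Hypothesis3}) to the globally Lipschitz one (Hypothesis \ref{Hypothesis3-bis}) by truncating $f$. The structural fact I would exploit throughout is that, since $g'=\gamma$, the friction is an exact time derivative, $\gamma(u_\mu)v_\mu=\partial_t g(u_\mu)$; accordingly I would introduce the variable $w_\mu:=\mu v_\mu+g(u_\mu)$, for which the friction cancels, $\partial_t w_\mu=\Delta u_\mu+F(u_\mu)+\sigma(u_\mu)Q\varphi+\sqrt{\mu}\,\sigma(u_\mu)\partial_t w^Q$, turning the damping into the strictly monotone nonlinearity $g$ (recall $(g(r_1)-g(r_2))(r_1-r_2)\geq\gamma_0\,|r_1-r_2|^2$). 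This monotone reformulation is what renders the estimates independent of the dimension $d$.

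For globally Lipschitz $f$ I would obtain existence by a Galerkin scheme: projecting \eqref{sm20} onto $\mathrm{span}\{e_1,\dots,e_N\}$ gives a finite-dimensional SDE whose coefficients have linear growth---by \eqref{sm4} and Hypotheses \ref{Hypothesis1}--\ref{Hypothesis3-bis}---hence is globally solvable. Uniform bounds come from Itô's formula applied to the energy $\mathcal{E}_\mu(u,v)=\tfrac{\mu}{2}\|v\|_H^2+\tfrac12\|u\|_{H^1}^2-\int_{\mathcal{O}}\mathfrak{f}(x,u(x))\,dx$: the friction contributes the dissipation $\langle\gamma(u)v,v\rangle_H\geq\gamma_0\|v\|_H^2$, the control term $\langle\sigma(u)Q\varphi,v\rangle_H$ is absorbed by Young's inequality using the linear growth of $\sigma$ and $\|\varphi\|_{L^2(0,T;H)}\leq M$, and the stochastic term is handled by Burkholder--Davis--Gundy. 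This yields $\mathbb{E}\sup_{[0,T]}(\|u_\mu^N\|_{H^1}^2+\|v_\mu^N\|_H^2)\leq C$ uniformly in $N$; compactness in time (Aubin--Lions, giving strong convergence of $u_\mu^N$ in $C([0,T];H)$) together with the continuity of $F,\sigma,\gamma$ then identifies the nonlinear terms in the limit and produces a solution in $\mathcal{H}_1$.

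Uniqueness is where the variable $w_\mu$ is indispensable. For two solutions I would estimate the difference in the weak energy $\tfrac12\|\bar w\|_{H^{-1}}^2+\tfrac{\mu}{2}\|\bar u\|_H^2$, with $\bar w=\mu\bar v+(g(u_1)-g(u_2))$: pairing the $w$-equation against $(-\Delta)^{-1}\bar w$ produces on the left the monotone gain $\langle\bar u,g(u_1)-g(u_2)\rangle_H\geq\gamma_0\|\bar u\|_H^2$, while the Lipschitz bounds on $F$ and $\sigma$ bound the remainder by $(c+C\|\varphi(t)\|_H^2)(\|\bar u\|_H^2+\|\bar w\|_{H^{-1}}^2)$, so Gronwall closes the estimate because $\|\varphi(\cdot)\|_H^2\in L^1(0,T)$. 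For Hypothesis \ref{Hypothesis3} I would truncate $f$ by the Lipschitz approximants $f_n$ of \eqref{sm300}, solve each problem by the previous step, and bound the solutions uniformly in $n$ via the same energy identity, now using \eqref{sm12}--\eqref{sm83} to keep $-\int\mathfrak{f}_n$ bounded below and \eqref{sm11} to sign the nonlinear contribution; since $d=1$ forces $H^1\hookrightarrow L^\infty$, the $H^1$-bound also controls $\|u_\mu^n\|_{L^\infty}$. Removal of the truncation is then a localization: with $\tau_R^n=\inf\{t:\|u_\mu^n(t)\|_{H^1}>R\}$, once $n$ exceeds the Sobolev bound on $\{\|u\|_{H^1}\leq R\}$ one has $f_n(\cdot,u_\mu^n)=f(\cdot,u_\mu^n)$ on $[0,\tau_R^n)$, the uniqueness estimate (with the $n$-independent local Lipschitz constant of $f$ read off from \eqref{sm9}) makes these local solutions consistent, and the uniform a priori bound forces $\mathbb{P}(\tau_R<T)\to0$ as $R\to\infty$, giving a global solution.

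The main obstacle is the friction term, both in the limit passages and in uniqueness: the product $(\gamma(u_1)-\gamma(u_2))v_2$ cannot be controlled by the $\mathcal{H}_1$-norm of the difference when $d\geq2$, and only barely---through $H^1\hookrightarrow L^\infty$---when $d=1$, so a naive energy estimate in $\mathcal{H}_1$ breaks down and the monotonicity of $g$, accessed through $w_\mu=\mu v_\mu+g(u_\mu)$, must carry the argument. A secondary difficulty, absent from \cite{CX}, is that here $\sigma$ has only linear growth and the control $\varphi$ is merely square-integrable in time; I would accommodate both throughout by Young's inequality and the time-integrability of $\|\varphi(t)\|_H^2$.
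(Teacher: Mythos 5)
Your structural reduction is the same one the paper uses: introducing $w_\mu=\mu v_\mu+g(u_\mu)$ (the paper's $\eta=v+g(u)$ after normalizing $\mu=1$, which is legitimate since $\mu$ is fixed here) so that the state-dependent friction becomes the strictly monotone map $g$; running the contraction/uniqueness estimate in the weak norm of $\mathcal{H}=H\times H^{-1}$; and treating Hypothesis \ref{Hypothesis3} by the truncations $f_n$ of \eqref{sm300} together with stopping times $\tau_n$ built on the embedding $H^1\hookrightarrow L^\infty$ (which is why $d=1$ is needed), followed by $\mathbb{P}(\tau_n\le T)\to 0$. Your uniqueness argument --- pairing the $w$-equation with $(-\Delta)^{-1}\bar w$ to extract $\gamma_0\Vert\bar u\Vert_H^2$, absorbing the $F$- and $\sigma$-terms via \eqref{sm15} and the integrability of $\Vert\varphi(\cdot)\Vert_H^2$, and localizing with $\tau_R$ --- is, in concrete form, exactly the paper's quasi-dissipativity argument in Step 4 of Subsection \ref{subsection4.2}.

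The genuine gap is in your existence step under Hypothesis \ref{Hypothesis3-bis}. You propose Galerkin approximations and then pass to the limit by \emph{pathwise} Aubin--Lions compactness plus continuity of $F,\sigma,\gamma$. In the stochastic setting this limit passage fails as stated: Aubin--Lions applied $\omega$ by $\omega$ produces a subsequence that depends on $\omega$, so it yields no candidate process at all (let alone an adapted one), and the Itô integral $\int_0^t P_N\Sigma(z^N)\,dw^Q$ is not a pathwise-defined object, so one cannot pass to the limit in it along $\omega$-dependent subsequences. One needs either (i) a Cauchy-sequence argument requiring no subsequence extraction, (ii) stochastic compactness (tightness, Skorokhod, and a Gy\"ongy--Krylov argument resting on pathwise uniqueness), or (iii) a Minty-type identification valid for weak limits of adapted processes. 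The paper takes route (i), with Yosida rather than Galerkin approximations: since $A(u,\eta)=(-g(u)+\eta,\Delta u+F(u))$ is quasi m-dissipative on $\mathcal{H}$, the regularized equations \eqref{sm27} with Lipschitz drift $A_\lambda$ have unique solutions $z_\lambda$; the uniform $\mathcal{H}_1$ bound \eqref{sm35} and the two-parameter dissipativity estimate \eqref{sm49} show that $\{z_\lambda\}$ is Cauchy in $L^2(\Omega;C([0,T];\mathcal{H}))$, see \eqref{sm46}, and the limit is then identified directly, with no compactness anywhere. Your scheme is repairable in the same spirit: for $N<M$ the Galerkin difference satisfies
\begin{equation*}
\langle P_NA(z^N)-P_MA(z^M),z^N-z^M\rangle_{\mathcal{H}}\leq \kappa\,\Vert z^N-z^M\Vert_{\mathcal{H}}^2+\Vert A(z^N)\Vert_{\mathcal{H}}\,\Vert(I-P_N)z^M\Vert_{\mathcal{H}},
\end{equation*}
and the error term is $O(\alpha_{N+1}^{-1/2})$ by \eqref{sm30} once you have the uniform $\mathcal{H}_1$ bound, which gives a Cauchy estimate exactly parallel to \eqref{sm49}; but it is that argument, not Aubin--Lions, that has to close the proof.
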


Once proved Theorem \ref{well-posedness}, 
we introduce the following two conditions.
\begin{enumerate}

\item[C1.] Let $\{\varphi_\mu\}_{\mu>0}$ be an arbitrary family of processe in $ \Lambda_{T,M}$ such that 
\[\lim_{\mu\to 0} \varphi_\mu=\varphi,\ \ \ \ \text{in distribution in}\ \ \ L_w^2(0,T;H),\]
where $L^2_w(0,T;H)$ is the space $L^2(0,T;H)$ endowed with the weak topology and  $\varphi \in\,\Lambda_{T,M}$. Then, for every $p<\infty$ we have
\[\lim_{\mu\to 0} u^{\varphi_\mu}_\mu=u^\varphi,\ \ \ \ \text{weakly in}\ \ \ C([0,T],L^p(\mathcal{O})),\]
where $u^{\varphi_\mu}_\mu$ is the solution to \eqref{SPDE-controlled}, corresponding to the control $\varphi_\mu$,  and $u^\varphi$ is the solution to \eqref{sm18}, corresponding to the control $\varphi$.
\item[C2.] For every $T, R>0$ and $p<\infty$, the level sets $\Phi_{T,R}=\{I_T	\leq R\}$ are compact in the space $C([0,T];L^p(\mathcal{O}))$.

\end{enumerate}

As shown in \cite{bdm}, 
Conditions  C1. and C2. imply the validity of a Laplace principle with action functional $I_T$ in the space $C([0,T];L^p(\mathcal{O}))$ for the family $\{u_\mu\}_{\mu>0}$. Due to the compactness of the level sets $\Phi_{T, R}$ stated in C1. this is equivalent to the validity of Theorem \ref{mainteo}.

Thus, in what follows our strategy will be first proving Theorem \ref{well-posedness}, for every fixed $\mu>0$, and then proving conditions C1. and C2.

\section{Well-posedness of equation \eqref{sm20} }
\label{sec4}

In Theorem \ref{well-posedness} the parameter $\mu>0$ is fixed. This means that in this section, without any loss of generality, we can assume $\mu=1$.
If we denote
\[\eta:=v+g(u),\ \ \ \ z=(u,\eta),\]
then system \eqref{sm20} can be rewitten as the following abstract stochastic evolution equation
\begin{equation}
\label{abstract}	
dz(t)=\left[A(z(t))+B_\varphi(t,z(t))\right]\,dt+\Sigma(z(t))dw^Q(t),\ \ \ \ z(0)=(u_0,v_0+g(u_0)),
\end{equation}
where
\begin{equation*}
	A(u,\eta)=\left(-g(u)+\eta,\Delta u+F(u)\right),\ \ \ \ \ \ \ (u,\eta)\in D(A)=\mathcal{H}_{1},
\end{equation*}

\begin{equation*}
	B_\varphi(t,(u,\eta))=(0,\sigma(u)Q\varphi(t)),\ \ \ \ \ \ \ \ (u,\eta)\in \mathcal{H},\ \ \ \ t\in[0,T],
\end{equation*}

and

\begin{equation*}
	\Sigma(u,\eta)=(0,\sigma(u)),\ \ \ \ \ \ \ \ (u,\eta)\in \mathcal{H}.
\end{equation*}
This means that the adapted $\mathcal{H}_1$-valued process $z(t)=(u(t),\eta(t))$ is the unique solution to the equation
\begin{equation}
\label{sm21}
z(t)=(u_0,v_0+g(u_0))+\int_0^t[A(z(s))+B_\varphi(s,z(s))]\,ds+\int_0^t \Sigma(z(s))\,dw^Q(s),	
\end{equation}
if and only if the adapted $\mathcal{H}_1$-valued process $(u(t),v(t))=(u(t),-g(u(t))+\eta(t))$ is the unique solution of 
\begin{equation}
\label{sm23}
\left\{\begin{array}{l}
\ds{u(t,x)=u_0(x)+\int_0^t v(s,x)\,ds,}\\
\vs
\ds{v(t,x)=\mu\,v_0(x)+\int_0^t\left[\Delta u(s,x)-\gamma(u(s,x))v(s,x)+f(x,u(s,x))+\sigma(u(s,\cdot))Q\varphi(s,x)\right]\,ds}\\
\vs
\ds{\quad\quad\quad+\int_0^t \si(u(s,\cdot))\,dw^Q(s).}
\end{array}\right.
	\end{equation}

In our proof of Theorem \ref{well-posedness} we first assume that $f:\mathbb{R}\to \mathbb{R}$ is Lipschitz-continuous and then we consider the case Hypothesis \ref{Hypothesis3} holds.

\subsection{The case  when $f$ satisfies Hypothesis \ref{Hypothesis3-bis}}
\label{subsection4.1}

In \cite[Section 3]{CX} an analogous result has been proved, in the case $\varphi=0$ (and hence $B_\varphi=0$) and   $\sigma$. Here, we extend the arguments used in \cite{CX} to consider the case of an arbitrary $\varphi \in\,\Lambda_{T,M}$, $\sigma$ having linear growth.  As in \cite[Section 3]{CX}, the arguments we are using here are based on classical tools from the theory of monotone non-linear operators and we refer to the monograph \cite{Barbu} for all details.

Since $f$ is assumed to be Lipschitz continuous, we have  
\begin{equation}
\label{sm30}
\Vert A(z)\Vert_{\mathcal{H}}\leq c\left(1+\Vert z\Vert_{\mathcal{H}_1}\right),\ \ \ \ \ z \in\,D(A),\end{equation}
and, as shown in \cite[Lemma 3.1]{CX}, there exists $\kappa \in\,\mathbb{R}$ such that 
\[\langle A(z_1)-A(z_2),z_1-z_2\rangle_{\mathcal{H}}\leq \kappa\,\Vert z_1-z_2\Vert^2_{\mathcal{H}}.\]
Moreover, for every $\la>0$ small enough 
\[\text{Range}\,(I-\la A)=\mathcal{H}.\]
This means that  the operator $A:D(A)\subset \mathcal{H}\to\mathcal{H}$ is {\em quasi m-dissipative}. In particular, this implies that there exists $\lambda_0>$ such that  
\[J_\la:=\left(I-\la A\right)^{-1},\ \ \ \ \la \in\,(0,\la_0),\]
is a well-defined Lipschitz-continuous mapping in $\mathcal{H}$ and
we can introduce the {\em Yosida approximation} of $A$, defined as
\[A_\lambda(z):=A(J_\la(z))=\frac 1\la \left(J_\la(z)-z\right),\ \ \ \ \ z \in\,\mathcal{H}.\]
Notice that
 \begin{equation}
 \label{diss_Yodida}
 	\langle A_{\lambda}(z_{1})-A_{\lambda}(z_{2}),z_{1}-z_{2}\rangle_{\mathcal{H}}\leq\frac{\kappa}{1-\lambda\kappa}\,\Vert z_{1}-z_{2}\Vert^2_{\mathcal{H}},
 \end{equation}
 and
 \begin{equation}\label{Lip_Yosida}
 	\Vert A_{\lambda}(z_{1})-A_{\lambda}(z_{2})\Vert_{\mathcal{H}}\leq \frac{2}{\lambda(1-\lambda\kappa)}\Vert z_{1}-z_{2}\Vert_{\mathcal{H}}.
 \end{equation}
 Moreover, for every $z\in D(A)$ 
 \[
 	\Vert A^{\lambda}(z)\Vert_{\mathcal{H}}\leq\frac{1}{1-\lambda\kappa}\,\Vert A(z)\Vert _{\mathcal{H}},
 \]
so that for every $z \in\,D(A)$
 \[
 	\Vert J_{\lambda}(z)-z\Vert_{\mathcal{H}}=\lambda\Vert A^{\lambda}(z)\Vert\leq\frac{\lambda}{1-\lambda\kappa}\Vert A(z)\Vert_{\mathcal{H}},
 \]
 and 
 \[
 	\lim\limits_{\lambda\to0}\Vert A_{\lambda}(z)-A(z)\Vert _{\mathcal{H}}=0.
 \]
 In \cite[Proof of Theorem 3.2]{CX}, it has been shown that there exists some $\la_1 \in\,(0,\la_0)$ such that for every $\la \in\,(0,\la_1)$ 
 \begin{equation}
 \label{sm41}
 \langle A_\la(z),z\rangle_{\mathcal{H}_1}\leq -\frac{\gamma_0}2\,\Vert J_\la(z)_1\Vert^2_{H^1}+c\,\Vert J_\la(z)_2\Vert_{H^{-1}}^2,\ \ \ \ \ z \in\,\mathcal{H}_1.	
 \end{equation}
 Furthermore, for every $\la, \nu \in\,(0,\la_0)$  and $z_1, z_2 \in\,\mathcal{H}_1$ it holds
 \begin{equation}
 \label{sm49}
 \begin{array}{l}
 \ds{\langle A_\la(z_1)-A_\nu(z_2),z_1-z_2\rangle_{\mathcal{H}}\leq \Vert z_1-z_2\Vert_{\mathcal{H}}^2+c\left(\la+\nu\right)\left(\Vert z_1\Vert_{\mathcal{H}_1}^2+\Vert z_2\Vert_{\mathcal{H}_1}^2	+1\right).}
 \end{array}
	\end{equation}

Concerning the random  operator $B_\varphi$, according to Hypothesis \ref{Hypothesis1} for every $t \in\,[0,T]$ we have that  $B_\varphi(t,\cdot):\mathcal{H}\to\mathcal{H}_{1}$ is well defined and, in view of \eqref{sm2},  for any $z_{1}, z_{2}\,\in\mathcal{H}$
\begin{equation}
\label{sm25}
	\begin{aligned}
		\Vert B_\varphi(t,z_{1})-B_\varphi(t,z_{2})\Vert _{\mathcal{H}_{1}}
		&=\Vert(\sigma(u_{1})-\sigma(u_{2}))\,Q\varphi(t)\Vert_{H}\leq \sqrt{L}\,\Vert u_1-u_2\Vert_H\,\Vert\varphi(t)\Vert_H.
	\end{aligned}
\end{equation}
Finally, Hypothesis \ref{Hypothesis1} implies that the mapping $\Sigma:\mathcal{H}\to \mathcal{L}_2(H_Q,\mathcal{H}_1)$ is well-defined and due to \eqref{sm2} for any $z_{1}, z_{2}\,\in\mathcal{H}$
\begin{equation}
	\label{sm26}
	\Vert \Sigma(z_1)-\Sigma(z_2)\Vert_{\mathcal{L}_2(H_Q,\mathcal{H}_1)}=\Vert \si(u_1)-\si(u_2)\Vert_{\mathcal{L}_2(H_Q,H)}\leq \sqrt{L}\,\Vert u_1-u_2\Vert_H.
\end{equation}

\bigskip

{\em Step 1.}
For every $\la \in\,(0,\la_0)$  the approximating problem
\begin{equation}
\label{sm27}	dz(t)=\left[A_\la(z(t))+B_\varphi(t,z(t))\right]\,dt+\Sigma(z(t))dw^Q(t),\ \ \ \ z(0)=(u_0,v_0+g(u_0))
\end{equation}
admits a unique solution $z_\la \in\,L^2(\Omega;C([0,T];\mathcal{H}))$.
\smallskip

{\em Proof of Step 1.}
According to  \eqref{Lip_Yosida}, we have 
\begin{equation}
\label{sm32}
\int_{0}^{T}\Vert A_{\lambda}(z_1(s))-A_{\lambda}(z_2(s))\Vert_{\mathcal{H}}^2\,ds
 	\leq \frac{c}{\lambda^2}\int_{0}^{T}\Vert z_1(s)-z_2(s)\Vert_{\mathcal{H}}^2\,ds\leq \frac{c_{T}}{\la^2}\sup_{t\in[0,T]}\Vert z_1(t)-z_2(t)\Vert_{\mathcal{H}}^2.\end{equation}
 	According to \eqref{sm25}, if $\varphi \in\,\Lambda_{T,M}$, we have
 	\begin{equation}
 	\label{sm33}	
 	\begin{array}{ll}	
\ds{\int_0^T\Vert B_\varphi(t,z_{1})-B_\varphi(t,z_{2})\Vert^2 _{\mathcal{H}_{1}}\,dt}  &  \ds{\leq L \int_0^T \Vert u_1(t)-u_2(t)\Vert^2_H\,\Vert\varphi(t)\Vert^2_H\,dt}\\
& \vs
& \ds{\leq L\,M^2\sup_{t\in[0,T]}\Vert z_1(t)-z_2(t)\Vert_{\mathcal{H}}^2,\ \ \ \ \ \mathbb{P}-\text{a.s}.}
\end{array}
\end{equation}
Finally, according  to \eqref{sm26}, we have
\begin{equation}
\label{sm34}
\begin{array}{ll}
\ds{\mathbb{E}\sup_{t \in\,[0,T]}\left \Vert \int_0^t(\Sigma(z_{1}(s))-\Sigma(z_{2}(s))dw^Q(s)\right\Vert^2_{\mathcal{H}_1}}  &  \ds{\leq c\int_0^T\mathbb{E}\,\Vert\Sigma(z_{1}(s))-\Sigma(z_{2}(s))\Vert_{\mathcal{L}_{2}(H_{Q},\mathcal{H}_1)}^2\,ds	}\\
& \vs
&\ds{\leq T L\,\mathbb{E}\sup_{t\in[0,T]}\Vert z_1(t)-z_2(t)\Vert_{\mathcal{H}}^2.}
\end{array}
	\end{equation}
	Therefore, in view of \eqref{sm32}, \eqref{sm33} and \eqref{sm34}, for every $\la \in\,(0,\la_0)$  the mapping
	\[\Phi_\la(z)(t)=(u_0,v_0+g(u_0))+\int_0^t[A_\la(z(s))+B_\varphi(s,z(s))]\,ds+\int_0^t \Sigma(z(s))\,dw^Q(s)\]
	is Lipschitz continuous from $L^2(\Omega;C([0,T];\mathcal{H}))$ into itself, and then for every $\la \in\,(0,\la_0)$ equation \eqref{sm27} admits   a unique solution $z_\la \in\,L^2(\Omega;C([0,T];\mathcal{H}))$.
	
	\bigskip
	
	{\em Step 2.} There exists $c_T>0$ such that		\begin{equation}
	\label{sm35}
	\mathbb{E}\sup_{t \in\,[0,T]}\Vert z_\la(t)\Vert^2_{\mathcal{H}_1}\leq c_T\left(1+\Vert z_0\Vert_{\mathcal{H}_1}^2\right),\ \ \ \ \ \la \in\,(0,\la_1).
	\end{equation}
	
	\smallskip
	
	{\em Proof of Step 2.} As a consequence of It\^o's formula, we have
	\begin{align*}
 	\Vert z_{\lambda}(t)\Vert_{\mathcal{H}_1}^2
 	&=\Vert z_{0}\Vert_{\mathcal{H}_1}^2+2\int_{0}^{t}\langle A_{\lambda}(z_{\lambda}(s)),z_{\lambda}(s)\rangle_{\mathcal{H}_1}\,ds+2\int_{0}^{t}\langle B_\varphi(s,z_{\lambda}(s)),z_{\lambda}(s)\rangle_{\mathcal{H}_1}\,ds\\
 	&\vs
 	&\ \ \ \  +\int_{0}^{t}\Vert\Sigma(z_{\lambda}(s))\Vert_{\mathcal{L}_{2}(H_{Q},\mathcal{H}_1)}^2ds+2\int_{0}^{t}\langle z_{\lambda}(s),\Sigma(z_{\lambda}(s))dw^{Q}(s)\rangle_{\mathcal{H}_1}.
 \end{align*}
	Due to \eqref{sm41}, we have
	\begin{equation}
	\label{sm44}
	\begin{array}{ll}
\ds{\int_{0}^{t}\langle A_{\lambda}(z_{\lambda}(s)),z_{\lambda}(s)\rangle_{H^{-1}}\,ds}
 &  \ds{\leq -\frac{\gamma_0}2\,\int_0^t\Vert J_\la(z_\la(s))_1\Vert^2_{H^1}\,ds+c\int_0^t\Vert J_\la(z_\la(s))\Vert_{\mathcal{H}}^2\,ds}\\
&\vs
& \ds{\leq -\frac{\gamma_0}2\,\int_0^t\Vert J_\la(z_\la(s))_1\Vert^2_{H^1}\,ds+\int_0^t\Vert z_\la(s)\Vert_{\mathcal{H}}^2\,ds.}
\end{array}	
\end{equation}
Next, 
 recalling that $\varphi \in\,\Lambda_{T,M}$, due to \eqref{sm25} for every $\d>0$ we have
 \begin{equation}
 \label{sm39}
 \begin{array}{l}
\ds{\left|\int_{0}^{t}\langle B_\varphi(s,z_{\lambda}(s)),z_{\lambda}(s)\rangle_{\mathcal{H}_1}ds\right|\leq \d\int_0^t\Vert B_\varphi(s,z_\la(s))\Vert^2_{\mathcal{H}_1}\,ds+\frac c\delta  \int_0^t\Vert z_\la(s)\Vert^2_{\mathcal{H}_1}\,ds}\\
\vs\ds{\leq \d\,c_M\left(1+\sup_{r \in\,[0,t]}\Vert z_\la(r)\Vert^2_{\mathcal{H}}\right) + \frac c\delta  \int_0^t\Vert z_\la(s)\Vert^2_{\mathcal{H}_1}\,ds,\ \ \ \ \ \mathbb{P}-\text{a.s}.}
	\end{array}
	\end{equation}
	In the same way, thanks to \eqref{sm26} for every $\d>0$
\begin{equation}
\label{sm40}
\begin{array}{l}
 \ds{\mathbb{E}\sup_{r\in[0,t]}\left\vert \int_{0}^{r}\langle z_{\lambda}(s),\Sigma(z_{\lambda}(s))dw^{Q}(s)\rangle_{\mathcal{H}_1}\right\vert\leq c\,\mathbb{E}\left(\sup_{r\in[0,t]}\Vert z_{\lambda}(r)\Vert_{\mathcal{H}_1}^2\int_{0}^{t}\Vert\Sigma(z_{\lambda}(s))\Vert_{\mathcal{L}_{2}(H_{Q},\mathcal{H}_1)}^2 ds\right)^{\frac{1}{2}}}\\
 \vs
 	\ds{\leq \d\,\mathbb{E}\sup_{r\in[0,t]}\Vert z_{\lambda}(r)\Vert_{\mathcal{H}_1}^2+\frac{c}{\d}\int_{0}^{t}\mathbb{E}\Vert z_{\lambda}(s)\Vert_{\mathcal{H}_1}^2 \,ds+\frac{c_T}\d.}
 \end{array}
 \end{equation}
 Finally, due to \eqref{sm26} we get
 \begin{equation}
 \label{sm45}
 \int_{0}^{t}\Vert\Sigma(z_{\lambda}(s))\Vert_{\mathcal{L}_{2}(H_{Q},\mathcal{H}_1)}^2\,ds	\leq c_T\left(1+\int_0^t \mathbb{E}\Vert z_\la(s)\Vert_{\mathcal{H}}^2\,ds\right)
 \end{equation}

Therefore, if we choose  $\d>0$ small enough, from \eqref{sm44}, \eqref{sm39}, \eqref{sm40} and \eqref{sm45} we get
 \[
 \begin{array}{l}
\ds{	\mathbb{E}\sup_{ r \in\,[0,t]}\Vert z_\la(r)\Vert^2_{\mathcal{H}_1}+\frac{\gamma_0}2\,\int_0^t\mathbb{E}\Vert J_\la(z_\la(s))_1\Vert^2_{H^1}\,ds\leq c_T\int_0^t \mathbb{E}\,\sup_{r \in\,[0,s]}\Vert z_\la(r)\Vert_{\mathcal{H}_1}^2\,ds+c_T.}
\end{array}
\]
Now, Gronwall's lemma allows to obtain \eqref{sm35}.

\bigskip

{\em Step 3.}
There exists $z \in\,L^2(\Omega; C([0,T];\mathcal{H}))$ such that
\begin{equation}
\label{sm46}
\lim_{\la\to 0} \mathbb{E} \sup_{t \in\,[0,T]}\Vert z_\la(t)-z(t)\Vert_{\mathcal{H}}^2=0.
\end{equation}
	
\smallskip
	
	{\em Proof of Step 3.} 	For every $\la, \nu \in\,(0,\la_1)$, we denote $\rho_{\la,\nu}(t):=z_\la(t)-z_\nu(t)$. We have
	 \begin{equation*}
 \begin{array}{ll}
 \ds{d\rho_{\lambda,\nu}(t)=}  &  \ds{\big[A_{\lambda}(z_{\lambda}(t))-A_{\nu}(z_{\nu}(t))\big]dt}\\
 & \vs
 &\ds{	+\big[B_\varphi(t,z_{\lambda}(t))-B_\varphi(t,z_{\nu}(t))\big]dt+\big[\Sigma(z_{\lambda}(t))-\Sigma(z_{\nu}(t))\big]dw^{Q}(t).}
 \end{array}
 \end{equation*}
We have
 \[\begin{array}{l}
 	\ds{\Vert\rho_{\lambda,\nu}(t)\Vert_{\mathcal{H}}^2=2\int_{0}^{t}\langle A_{\lambda}(z_{\lambda}(s))-A_{\nu}(z_{\nu}(s)),\rho_{\lambda,\nu}(s)\rangle_{\mathcal{H}}ds}\\
 	\vs
 	\ds{+2\int_{0}^{t}\langle B_\varphi(s,z_{\lambda}(s))-B_\varphi(s,z_{\nu}(s)),\rho_{\lambda,\nu}(s)\rangle_{\mathcal{H}}ds+\int_{0}^{t}\Vert\Sigma(z_{\lambda}(s))-\Sigma(z_{\nu}(s))\Vert_{\mathcal{L}_{2}(H_{Q},\mathcal{H})}^2\,ds}\\
 	\vs
 	\ds{+2\int_{0}^{t}\langle \rho_{\lambda,\nu}(s),\left(\Sigma(z_{\lambda}(s))-\Sigma(z_{\nu}(s))\right)dw^{Q}(s)\rangle_{\mathcal{H}}=:\sum_{k=1}^4 I_k(t).}
 \end{array}\]
 Due to \eqref{sm49}, we have
 \begin{equation}
 \label{sm50}
 |I_1(t)| \leq c\int_{0}^{t}\Vert\rho_{\lambda,\nu}(s)\Vert_{\mathcal{H}}^2ds+c(\lambda+\nu)\int_{0}^{t}\left(\Vert z_{\lambda}(s)\Vert_{\mathcal{H}_{1}}^2+\Vert z_{\nu}(s)\Vert_{\mathcal{H}_{1}}^2+1\right)ds,	
 \end{equation}
 and due to \eqref{sm26} we have
 \begin{equation}
 \label{sm51}
 |I_3(t)|\leq c\int_0^t \Vert \rho_{\la, \nu}(s)\Vert_{\mathcal{H}}^2\,ds.
 \end{equation}
 Moreover, by proceeding as in the proof of \eqref{sm39} and \eqref{sm40}, for every $\d>0$ we have
 \begin{equation}
 \label{sm52}
 \mathbb{E}\sup_{r \in\,[0,t]}\vert I_2(r)\vert +\mathbb{E}\sup_{r \in\,[0,t]}\vert I_4(r)\vert\leq \d\, \mathbb{E}\sup_{r \in\,[0,t]}\Vert \rho_{\la, \nu}(r)\Vert_{\mathcal{H}}^2+\frac c\d\int_0^t \mathbb{E}\,\Vert \rho_{\la, \nu}(r)\Vert_{\mathcal{H}}^2\,dr.
 \end{equation}
Therefore, if we choose $\d>0$ sufficiently small, from \eqref{sm50}, \eqref{sm51} and \eqref{sm52}, we obtain
\[\begin{array}{ll}
\ds{\mathbb{E}	\sup_{r \in\,[0,t]}\Vert\rho_{\lambda,\nu}(r)\Vert_{\mathcal{H}}^2\leq}  &  \ds{ c\int_0^t \mathbb{E}\,\sup_{r \in\,[0,s]}\Vert \rho_{\la, \nu}(r)\Vert_{\mathcal{H}}^2\,ds}\\
& \vs
&\ds{+c\,(\lambda+\nu)\int_{0}^{t}\left(\Vert z_{\lambda}(s)\Vert_{\mathcal{H}_{1}}^2+\Vert z_{\nu}(s)\Vert_{\mathcal{H}_{1}}^2+1\right)ds,}
\end{array}\]
and the Gronwall lemma, together with \eqref{sm35}, gives 
\[\begin{array}{ll}
\ds{\mathbb{E}	\sup_{r \in\,[0,T]}\Vert\rho_{\lambda,\nu}(r)\Vert_{\mathcal{H}}^2}  &  \ds{	\leq c_T(\lambda+\nu)\int_{0}^{T}\left(\Vert z_{\lambda}(s)\Vert_{\mathcal{H}_{1}}^2+\Vert z_{\nu}(s)\Vert_{\mathcal{H}_{1}}^2+1\right)ds}\\
&\vs
&\ds{\leq c_T(\la+\nu)\left(1+\Vert z_0\Vert_{\mathcal{H}}^2\right).}
\end{array}
\]
This implies that
\[\lim_{\la, \nu\to 0}\mathbb{E}	\sup_{r \in\,[0,T]}\Vert\rho_{\lambda,\nu}(r)\Vert_{\mathcal{H}}^2=0,\]
so that the family $\{z_\la\}_{\la \in\,(0,\la_1)}$ is Cauchy and \eqref{sm46} follows.

\bigskip

{\em Step 4.} There exists a unique solution $z \in\,L^2(\Omega;C([0,T];\mathcal{H}_1))$ for equation \eqref{abstract}.

\smallskip

{\em Proof of step 4.} For every $\la \in\,(0,\la_0)$ we have that $z_\la$ satisfies equation \eqref{sm27}. Then, by proceeding as in \cite[Proof of Theorem 3.2]{CX},  we take the limit, as $\la$ goes to zero, of both sides of \eqref{sm27} in $L^2(\Omega;C([0,T];\mathcal{H}_{-1}))$ and thanks to \eqref{sm46} we obtain that $z$ satisfies the equation
\[z(t)=(u_0,v_0+g(u_0))+\int_0^t[A(z(s))+B_\varphi(s,z(s))]\,ds+\int_0^t \Sigma(z(s))\,dw^Q(s),\]
 and $z \in\,L^2(\Omega;L^\infty(0,T;\mathcal{H}_1))$.

Next, by using again arguments analogous to those used in \cite[Proof of Theorem 3.2]{CX}, we can show  that $z$ has continuous trajectories and is the unique solution of equation \eqref{sm21}.

\subsection{The case when $f$ satisfies Hypothesis \ref{Hypothesis3}}
\label{subsection4.2}

In view of what we have seen in Subsection \ref{subsection4.1}, for every $n \in\,\nat$ and for every $\varphi \in\,\Lambda_{T, M}$ and $(u_0, v_0) \in\,\mathcal{H}_1$ there exists a unique solution $(u_n,v_n) \in\,L^2(\Omega;C([0,T];\mathcal{H}_1))$ for the equation 
\begin{equation}
\label{SPDE-controlled-alpha}
\left\{\begin{array}{l}
\ds{u_n(t,x)=u_0(x)+\int_0^t v_n(s,x)\,ds,}\\
\vs
\ds{v_n(t,x)=\mu\,v_0(x)+\int_0^t\left[\Delta u_n(s,x)-\gamma(u_n(s,x))v_n(s,x)+f_n(x,u(s,x))+\sigma(u_n(s,\cdot))Q\varphi(s,x)\right]\,ds}\\
\vs
\ds{\quad \quad\quad\quad+\int_0^t \si(u(s,\cdot))\,dw^Q(s,x),}
\end{array}\right.
\end{equation}
where $f_n$ is the function defined in \eqref{sm300}.

\bigskip

For every $n\in\mathbb{N}$, we define
\[\tau_n:=\inf\,\left\{t\geq 0\ :\ \Vert u_n(t)\Vert_{H^1}\geq n/C\right\},\]
with $\inf \emptyset=+\infty$, where $C>0$ is a constant such that $\norm{\cdot}_{L^{\infty}(0,L)}\leq C\norm{\cdot}_{H}$. Clearly $\{\tau_n\}_{n \in\,\mathbb{N}}$ is an increasing sequence of stopping times.\\

We denote $\tau:=\sup_{n \in\,\mathbb{N}}\tau_{n}$, and for every $\omega\in\Omega$ and $t<\tau(\omega)\wedge T$, define
\begin{equation*}
	z(t)(\omega):=z_{n}(t)(\omega),\ \ \ \text{if}\ t<\tau_{n}(\omega)\leq T.
\end{equation*}
Notice that this is a good definition, as $f_n(r)=f_m(r)$, for every $n\leq m$ and $|r|\leq n$. Moreover, since for $\omega \in\Omega$ and $t\leq \tau_n(\omega)\wedge T$
\[\Vert u_n(t)(\omega)\Vert_{L^\infty([0,L])}\leq \Vert u_n(t)(\omega)\Vert_{H^1}\leq n,\]
we have that $f_n(u(s)(\omega))=f(u(s)(\omega))$. This means 
$z(t)=z_{n}(t)$ solves equation \eqref{sm23} for $t\leq \tau_{n}\wedge T$.\\

{\em Step 1.} There exists $c_{T}>0$ independent of $n\in\mathbb{N}$ such that
\begin{equation}\label{sm148}
	\begin{array}{l}
		\ds{\E\sup_{t\in[0,T]}\norm{u(t\wedge\tau_{n})}_{H}^{2}+\int_{0}^{T}\E\norm{u(t\wedge\tau_{n})}_{H^{1}}^{2}dt+\int_{0}^{T}\E\norm{u(t\wedge\tau_{n})}_{L^{\theta+1}}^{\theta+1}dt}\\
		\vs
		\ds{\leq c_{T}\Big(1+\int_{0}^{T}\E\norm{u(t\wedge\tau_{n})}_{H}^{2}dt+\int_{0}^{T}\E\norm{v(t\wedge\tau_{n})}_{H}^{2}dt+\E\sup_{t\in[0,T]}\norm{v(t\wedge\tau_{n})}_{H}^{2}\Big).}
	\end{array}
\end{equation}

\medskip

{\em Proof of Step 1.} Recall that for $t< \tau_{n}\leq T$, $z(t)=z_{n}(t)$ is a solution of equation \eqref{sm23}, by proceeding as in \cite[Proof of Lemma 4.1]{CX}, we have 
\begin{equation}
	\label{sm146}
	\begin{array}{l}
		\ds{\frac{\gamma_{0}}{4}\norm{u(t\wedge\tau_{n})}_{H}^{2}+\int_{0}^{t\wedge\tau_{n}}\norm{u(s)}_{H^{1}}^{2}\leq c+c\norm{v(t\wedge\tau_{n})}_{H}^{2}+\int_{0}^{t\wedge\tau_{n}}\norm{v(s)}_{H}^{2}ds}\\
		\vs
		\ds{+\int_{0}^{t\wedge\tau_{n}}\inner{F(u(s)),u(s)}_{H}ds+\int_{0}^{t\wedge\tau_{n}}\inner{u(s),\sigma(u(s))Q\varphi(s)}_{H}ds+\int_{0}^{t\wedge\tau_{n}}\inner{u(s),\sigma(u(s))dw^{Q}(s)}_{H}}.
	\end{array}
\end{equation}
Thanks to \eqref{sm145}, we have 
\begin{equation}
	\label{sm147}
	\int_{0}^{t\wedge\tau_{n}}\inner{F(u(s)),u(s)}_{H}ds\leq -c_{2}\int_{0}^{t\wedge\tau_{n}}\norm{u(s)}_{L^{\theta+1}}^{\theta+1}ds+c_{2}t
\end{equation}
Moreover, by proceeding as in the proof of \eqref{sm39} and \eqref{sm40}, for every $\d>0$ we have
\[
\begin{array}{l}	
	\ds{\mathbb{E}\sup_{r\in[0,t]}\left|\int_{0}^{r\wedge\tau_{n}}\langle u(s),\sigma(u(s))Q\varphi(s)\rangle_{H}ds\right|+\mathbb{E}\sup_{r\in[0,t]}\left|\int_{0}^{r\wedge\tau_{n}}\langle u(s),\sigma(u(s))dw^{Q}(s)\rangle_{H}\right|}\\
	\vs
	\ds{\leq \d 	\mathbb{E}\sup_{r\in[0,t]}\Vert u(r\wedge\tau_{n})\Vert_{H}^2+ \frac c\d \int_0^{t} \mathbb{E}\Vert u(s\wedge\tau_{n})\Vert_{H}^2\,ds.}
\end{array}
\]
Therefore, if we choose $\d>0$ sufficiently small above, this, together with  \eqref{sm146} and \eqref{sm147} allows to conclude that \eqref{sm148} holds true.

\bigskip

{\em Step 2.} There exists $c_T>0$ independent of $n \in\,\mathbb{N}$ such that
\begin{equation}
	\label{sm149}
\begin{array}{l}
\ds{\mathbb{E}\,	\sup_{t \in \,[0,T]}\Vert(u(t\wedge\tau_{n}),v(t\wedge\tau_{n}))\Vert_{\mathcal{H}_1}^2+\E\sup_{t\in[0,T]}\norm{u(t\wedge\tau_{n})}_{L^{\theta+1}}^{\theta+1}+\gamma_{0}\int_0^T\mathbb{E}\Vert v(s)\Vert_H^2\,ds\leq c_T\left(1+\Vert u_0\Vert_{H^1}^{\theta+1}+\Vert v_0\Vert_{H}^2\right).}	
\end{array}
 \end{equation}

\medskip

{\em Proof of Step 2.} From the It\^o formula we have
\[\begin{array}{l}
\ds{\frac{1}{2}\Big[\Vert u(t\wedge\tau_{n})\Vert_{H^1}^2+\Vert v(t\wedge\tau_{n})\Vert^2_{H}\Big]=\frac{1}{2}\Big[\Vert u_0\Vert_{H^1}^2+\Vert v_0\Vert^2_{H}\Big]-\int_0^{t\wedge\tau_{n}} \langle \gamma(u(s))v(s),v(s)\rangle_H\,ds}\\
\vs
\ds{+\int_{\mathcal{O}}\mathfrak{f}(x,u_n(t\wedge\tau_{n},x))\,dx-\int_{\mathcal{O}}\mathfrak{f}(x,u_0(x))\,dx+\int_0^{t\wedge\tau_{n}}\langle \sigma(u(s))Q\varphi(s),v(s)\rangle_H\,ds}\\
\vs
\ds{+\int_0^{t\wedge\tau_{n}}\langle \sigma(u(s))dw^Q(s),v(s)\rangle_H+\frac{1}{2}\int_{0}^{t\wedge\tau_{n}}\norm{\sigma(u(s))}_{\mathcal{L}_{2}(H_{Q},H)}^{2}ds.}	
\end{array}\]
Thanks to \eqref{sm12} we have
\[\int_{\mathcal{O}}\mathfrak{f}(x,u(t\wedge\tau_{n},x))\,dx\leq c-c_{2}\norm{u(t\wedge\tau_{n})}_{L^{\theta+1}}^{\theta+1},\]
and \[\left|\int_{\mathcal{O}}\mathfrak{f}(x,u_0(x))\,dx\right|\leq c\left(1+\int_{\mathcal{O}}|u_0(x)|^{\theta+1}\,dx\right)=c\left(1+\Vert u_0\Vert_{L^{\theta+1}}^{\theta+1}\right)\leq c\left(1+\Vert u_0\Vert_{H^1}^{\theta+1}\right).\]
Therefore, due to \eqref{sm4},
\[\begin{array}{l}
\ds{\sup_{r \in\,[0,t]}\Vert u(r\wedge\tau_{n})\Vert_{H^1}^2+\sup_{r\in[0,t]}\norm{u(r\wedge\tau_{n})}_{L^{\theta+1}}^{\theta+1}+\sup_{r \in\,[0,t]}\Vert v(r\wedge\tau_{n})\Vert^2_{H}+\gamma_{0}\int_0^{t\wedge\tau_{n}}\Vert v(s)\Vert_H^2\,ds}\\
\vs
\ds{\leq c\left(1+\Vert u_0\Vert_{H^1}^{\theta+1}+\Vert v_0\Vert^2_{H}\right)+\sup_{r \in\,[0,t]}\left|\int_0^{r\wedge\tau_{n}}\langle \sigma(u(s))Q\varphi(s),v(s)\rangle_H\,ds\right|}\\
\vs
\ds{+\sup_{r \in\,[0,t]}\left|\int_0^{r\wedge\tau_{n}}\langle \sigma(u(s))dw^Q(s),v(s)\rangle_H\right|+c\int_{0}^{t\wedge\tau_{n}}\norm{u(s)}_{H}^{2}ds.}
\end{array}\]
Since $\varphi \in\,\Lambda_{T,M}$, by proceeding as in \eqref{sm39} and \eqref{sm40}, this implies that for every $\d>0$ 
\[\begin{array}{l}
\ds{\E\sup_{r \in\,[0,t]}\Vert u(r\wedge\tau_{n})\Vert_{H^1}^2+\E\sup_{r\in[0,t]}\norm{u(r\wedge\tau_{n})}_{L^{\theta+1}}^{\theta+1}+\E\sup_{r \in\,[0,t]}\Vert v(r\wedge\tau_{n})\Vert^2_{H}+\gamma_{0}\E\int_0^{t\wedge\tau_{n}}\Vert v(s)\Vert_H^2\,ds}\\
\vs
\ds{\leq c\left(1+\Vert u_0\Vert_{H^1}^{\theta+1}+\Vert v_0\Vert^2_{H}\right)+\d\,\mathbb{E}\sup_{r \in\,[0,t]}\Vert u(r\wedge\tau_{n})\Vert_{H}^2+\frac c\d\int_0^{t}\,\E\Vert v(s\wedge\tau_{n})\Vert_{H}^2\,ds+c\int_{0}^{t}\E\norm{u(s\wedge\tau_{n})}_{H}^{2}ds.}
\end{array}\]
In particular, if we choose $\d$ small enough we have
\begin{equation}
	\label{sm150}
	\begin{array}{l}
		\ds{\E\sup_{r \in\,[0,t]}\Vert u(r\wedge\tau_{n})\Vert_{H^1}^2+\E\sup_{r\in[0,t]}\norm{u(r\wedge\tau_{n})}_{L^{\theta+1}}^{\theta+1}+\E\sup_{r \in\,[0,t]}\Vert v(r\wedge\tau_{n})\Vert^2_{H}}\\
		\vs
		\ds{\leq c\left(1+\Vert u_0\Vert_{H^1}^{\theta+1}+\Vert v_0\Vert^2_{H}\right)+ c\int_0^{t}\,\E\Vert v(s\wedge\tau_{n})\Vert_{H}^2\,ds+c\int_{0}^{t}\E\norm{u(s\wedge\tau_{n})}_{H}^{2}ds,}
	\end{array}
\end{equation}
and if we choose $\delta$ large enough we have
\begin{equation}
	\label{sm151}
	\begin{array}{l}
		\ds{\E\sup_{r\in[0,t]}\norm{u(r\wedge\tau_{n})}_{L^{\theta+1}}^{\theta+1}+\E\sup_{r \in\,[0,t]}\Vert v(r\wedge\tau_{n})\Vert^2_{H}+\gamma_{0}\E\int_0^{t\wedge\tau_{n}}\Vert v(s)\Vert_H^2\,ds}\\
		\vs
		\ds{\leq c\left(1+\Vert u_0\Vert_{H^1}^{\theta+1}+\Vert v_0\Vert^2_{H}\right)+c\int_{0}^{t}\E\norm{u(s\wedge\tau_{n})}_{H}^{2}ds.}
	\end{array}
\end{equation}
Combining \eqref{sm151} with \eqref{sm148} yields that
\begin{equation}
	\label{sm152}
	\E\sup_{r \in\,[0,t]}\Vert u(r\wedge\tau_{n})\Vert_{H}^2\leq c\int_{0}^{t}\E\norm{u(s\wedge\tau_{n})}_{H}^{2}ds.
\end{equation}
Thanks to Gronwall's lemma, \eqref{sm149} follows from \eqref{sm150}.

\bigskip

{\em Step 3.} There exists $z=(u,v) \in\,L^2(\Omega;C([0,T];\mathcal{H}_1))$ solution to problem \eqref{sm23} such that
\begin{equation}
\label{sm91}
\begin{array}{l}
\ds{\mathbb{E}\,	\sup_{t \in \,[0,T]}\Vert(u(t),v(t))\Vert_{\mathcal{H}_1}^2+\E\sup_{t\in[0,T]}\norm{u(t)}_{L^{\theta+1}}^{\theta+1}+\gamma_{0}\int_0^T\mathbb{E}\Vert v(s)\Vert_H^2\,ds\leq c_T\left(1+\Vert u_0\Vert_{H^1}^{\theta+1}+\Vert v_0\Vert_{H}^2\right).}	
\end{array}
 \end{equation}.

\medskip

{\em Proof of Step 3.} According  to \eqref{sm149}, for every $T>0$ we have
\[\mathbb{P}(\tau_n\leq T)\leq \frac{C^{2}}{n^{2}}\E\big(\norm{u(\tau_{n})}_{H^{1}}^{2};\tau_{n}\leq T\big)\leq \frac {C^{2}}{n^2}\mathbb{E}\sup_{t \in\,[0,T]}\Vert u(t\wedge\tau_{n})\Vert_{H^1}^2\leq \frac c{n^2},\]
so that 
\[\lim_{n\to\infty}\mathbb{P}(\tau_n\leq T)=0,\]	
and hence $\mathbb{P}(\tau=\infty)=1$. This implies for every $t\in[0,T]$, $z(t\wedge\tau_{n})\to z(t)$, $\mathbb{P}$-a.s. as $n\to\infty$, so that $z$ belongs to $L^2(\Omega;C([0,T];\mathcal{H}_1))$ and solves equation \eqref{sm23}. By taking the limit as $n\to\infty$ in \eqref{sm149}, we get \eqref{sm91}.

\bigskip

{\em Step 4.} The solution $z$ is unique in $L^2(\Omega;C([0,T];\mathcal{H}_1))$.

\medskip
{\em Proof of Step 4.} Let $z_1$ and $z_2$ be two solutions of equation \eqref{abstract} in $L^2(\Omega;C([0,T];\mathcal{H}_1))$. For every $R>0$, we define
\[\tau_R:=\tau_{1,R}\wedge \tau_{2,R},\] 
where
\[\tau_{i,R}:=\inf\,\left\{t\geq 0\ :\ \Vert u_i(t)\Vert_{H^1}\geq R\right\},\ \ \ \ i=1, 2.\]
Since $z_1$ and $z_2$ belong to $L^2(\Omega;C([0,T];\mathcal{H}_1))$, we have 
\begin{equation}
\label{sm87}
\lim_{R\to\infty} \mathbb{P}\left(\tau_R<T\right)=0.	
\end{equation}

Now, if we define $\rho=z_1-z_2$, from the It\^o formula we have
\[\begin{array}{l}
 	\ds{\Vert\rho(t\wedge \tau_R)\Vert_{\mathcal{H}}^2=2\int_{0}^{t\wedge \tau_R}\langle A(z_1(s))-A(z_2(s)),\rho(s)\rangle_{\mathcal{H}}ds}\\
 	\vs
 	\ds{+2\int_{0}^{t\wedge \tau_R}\langle B_\varphi(s,z_1(s))-B_\varphi(s,z_2(s)),\rho(s)\rangle_{\mathcal{H}}ds+\int_{0}^{t\wedge \tau_R}\Vert\Sigma(z_1(s))-\Sigma(z_2(s))\Vert_{\mathcal{L}_{2}(H_{Q},\mathcal{H})}^2\,ds}\\
 	\vs
 	\ds{+2\int_{0}^{t\wedge \tau_R}\langle \rho(s),\left(\Sigma(z_1(s))-\Sigma(z_2(s))\right)dw^{Q}(s)\rangle_{\mathcal{H}}=:\sum_{k=1}^4 I_{k}(t\wedge \tau_R).}
 \end{array}\]
We have
\[\begin{array}{ll}
\ds{\langle A(z_1(s))-A(z_2(s)),\rho(s)\rangle_{\mathcal{H}}=}  &  \ds{-\langle(g(u_1(s))-g(u_2(s)),u_1(s)-u_2(s)\rangle_H}\\
& \vs
&\ds{+\langle F(u_1(s))-F(u_2(s)),\eta_1(s)-\eta_2(s)\rangle_{H^{-1}},}
\end{array}\]	
so that, according to \eqref{sm15}
\[\begin{array}{l}
\ds{\left|\langle A(z_1(s))-A(z_2(s)),\rho(s)\rangle_{\mathcal{H}}\right|\leq c\,\Vert \rho(s)\Vert_{\mathcal{H}}^2+c\,\Vert F(u_1(s))-F(u_2(s))\Vert_{H}^2}\\
\vs
\ds{\leq c\,\Vert \rho(s)\Vert_{\mathcal{H}}^2+c\left(1+\Vert u_1(s)\Vert_{H^1}^{2(\theta-1)}+\Vert u_2(s)\Vert_{H^1}^{2(\theta-1)}\right)\Vert u_1(s)-u_2(s)\Vert^2_{H}.}
\end{array}\]
This implies that 
\begin{equation}
\label{sm88}
\sup_{r \in\,[0,t]}|I_{1}(r\wedge \tau_R)|\leq c(R)	\int_0^{t\wedge \tau_R}\Vert \rho(s\wedge \tau_R)\Vert_{\mathcal{H}}^2\,ds.
\end{equation}
Moreover, by proceeding as in \eqref{sm39} and \eqref{sm40}, for every $\d>0$ we have
\begin{equation}
\label{sm89}
\mathbb{E}\sup_{r \in\,[0,t]}\left(|I_2(r\wedge \tau_R)|+|I_4(r\wedge \tau_R)|\right)\leq \d \mathbb{E}\sup_{r \in [0,t]}\Vert \rho(r\wedge \tau_R)\Vert_{\mathcal{H}}^2+\frac c\d \int_0^{t}\mathbb{E}\Vert \rho(s\wedge \tau_R)\Vert_{\mathcal{H}}^2\,ds.	
\end{equation}
Therefore, since 
\[\sup_{r \in\,[0,t]}|I_3(r\wedge \tau_R)|\leq c\int_0^{t\wedge \tau_R}\Vert \rho(s\wedge \tau_R)\Vert_{\mathcal{H}}^2\,ds,\]	
thanks to \eqref{sm88} and \eqref{sm89}, if we fix some $\d>0$ small enough, we get
\[\mathbb{E}\sup_{r \in\,[0,t]}\Vert\rho(r\wedge \tau_R)\Vert_{\mathcal{H}}^2\leq c(R)\int_0^{t}\E\Vert \rho(r\wedge \tau_R)\Vert_{\mathcal{H}}^2\,dr.\]
This implies that for every $R>0$
\[\mathbb{E}\sup_{r \in\,[0,T]}\Vert\rho(r\wedge \tau_R)\Vert_{\mathcal{H}}^2=0.\]
In view of \eqref{sm87}, by taking the limit as $R\uparrow\infty$ this gives
$\mathbb{E}\sup_{r \in\,[0,T]}\Vert\rho(r)\Vert_{\mathcal{H}}^2=0$ and  uniqueness follows.

\section{A-priori bounds and tightness}\label{sec5}
In the previous section we have proved that for any $\mu>0$ and any $T>0$ there exists a unique solution $(u_\mu, \partial_t u_\mu) \in\,L^2(\Omega;C([0,T];\mathcal{H}_1))$ to system \eqref{sm23}. Our purpose here is proving a  bound for $(u_\mu, \partial_t u_\mu)$, which is uniform with respect to $\mu$.

\begin{Lemma}
\label{lemma5.1}
Under Hypotheses \ref{Hypothesis1} and \ref{Hypothesis2} and either Hypothesis \ref{Hypothesis3-bis} or Hypothesis \ref{Hypothesis3}, for every $T, M>0$  and for every initial condition $(u_0,v_0) \in\,\mathcal{H}_1$,  there exist $c_T>0$ and $\mu_T>0$	 such that for every $\varphi \in\,\Lambda_{T,M}$ and $\mu \in\,(0,\mu_T)$
\begin{equation}
\label{sm92}
\mathbb{E}\sup_{t \in\,[0,T]}\Vert u_\mu(t)\Vert_{H^1}^2+\mathbb{E}\sup_{t \in\,[0,T]}\Vert u_\mu(t)\Vert_{L^{\theta+1}}^{\theta+1}+\mu\,\mathbb{E} \sup_{t \in\,[0,T]}\Vert \partial_t u_\mu(t)\Vert_{H}^2+\int_0^T\mathbb{E}\Vert \partial_t u_\mu(t)\Vert_{H}^2\,dt\leq c_T.
\end{equation}
\end{Lemma}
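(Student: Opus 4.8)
The plan is to run an energy estimate on the natural wave energy and to neutralize the random control coefficient $\Vert\varphi\Vert_H^2$ by an integrating factor, the smallness of $\mu$ being needed only at the very end, to absorb the stochastic integral. Writing $v_\mu=\partial_t u_\mu$, I set
\[
\mathcal{E}_\mu(t)=\tfrac12\Vert u_\mu(t)\Vert_{H^1}^2+\tfrac{\mu}{2}\Vert v_\mu(t)\Vert_H^2-\int_{\mathcal O}\mathfrak{f}(x,u_\mu(t,x))\,dx .
\]
Exactly as in the It\^o computation of Step 2 of Subsection \ref{subsection4.2} (and in \cite[Lemma 4.1]{CX}), the terms $\langle\Delta u_\mu,v_\mu\rangle$ and $\langle F(u_\mu),v_\mu\rangle$ cancel, yielding the energy identity
\[
\mathcal{E}_\mu(t)+\int_0^t\langle\gamma(u_\mu)v_\mu,v_\mu\rangle_H\,ds=\mathcal{E}_\mu(0)+\int_0^t\langle v_\mu,\sigma(u_\mu)Q\varphi\rangle_H\,ds+\tfrac12\int_0^t\Vert\sigma(u_\mu)\Vert_{\mathcal{L}_2(H_Q,H)}^2\,ds+M_\mu(t),
\]
with $M_\mu(t)=\sqrt{\mu}\int_0^t\langle v_\mu,\sigma(u_\mu)\,dw^Q\rangle_H$. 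By \eqref{sm12} (or, in the Lipschitz case, by the quadratic bound on $\mathfrak{f}$) the shifted energy $\widehat{\mathcal{E}}_\mu:=\mathcal{E}_\mu+c$ obeys $\widehat{\mathcal{E}}_\mu\ge\tfrac12\Vert u_\mu\Vert_{H^1}^2+\tfrac\mu2\Vert v_\mu\Vert_H^2+c_2\Vert u_\mu\Vert_{L^{\theta+1}}^{\theta+1}\ge1$; in particular $\Vert u_\mu\Vert_H^2\le c\,\widehat{\mathcal{E}}_\mu$, so that by \eqref{sm4} and \eqref{sm5} one has $\Vert\sigma(u_\mu)\Vert_{\mathcal{L}_2(H_Q,H)}^2\le c_3\,\widehat{\mathcal{E}}_\mu$. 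To keep every stochastic integral a genuine martingale I would run all of the following up to the stopping times $\tau_n$ already available, and pass to the limit $n\to\infty$ by Fatou at the end; this also guarantees the finiteness needed to justify the absorptions below.

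The key device is the integrating factor. Using $\Vert\sigma(u_\mu)Q\varphi\Vert_H\le\Vert\sigma(u_\mu)\Vert_{\mathcal{L}_2(H_Q,H)}\Vert\varphi\Vert_H$ and Young's inequality, the drift of $\widehat{\mathcal{E}}_\mu$ is bounded by $-\tfrac{\gamma_0}{2}\Vert v_\mu\Vert_H^2+\beta(s)\widehat{\mathcal{E}}_\mu(s)$, where $\beta(s):=c_3\big(\tfrac{1}{2\gamma_0}\Vert\varphi(s)\Vert_H^2+\tfrac12\big)$ satisfies $\int_0^T\beta(s)\,ds\le K:=\tfrac{c_3}{2\gamma_0}M^2+\tfrac{c_3}{2}T$ almost surely, since $\varphi\in\Lambda_{T,M}$. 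Writing $A(t)=\int_0^t\beta\,ds$ and $Y(t)=\widehat{\mathcal{E}}_\mu(t)e^{-A(t)}$, It\^o's formula gives
\[
Y(t)+\tfrac{\gamma_0}{2}\int_0^t e^{-A(s)}\Vert v_\mu(s)\Vert_H^2\,ds\le Y(0)+\widetilde M(t),\qquad \widetilde M(t):=\int_0^t e^{-A(s)}\,dM_\mu(s).
\]
Taking expectations (the martingale vanishes) and using $e^{-A}\in[e^{-K},1]$ immediately produces the non-supremum bounds $\sup_{t\le T}\mathbb{E}\,\widehat{\mathcal{E}}_\mu(t)\le e^{K}\mathbb{E}\,\widehat{\mathcal{E}}_\mu(0)$ and $\int_0^T\mathbb{E}\Vert v_\mu\Vert_H^2\,ds\le\tfrac{2}{\gamma_0}e^{K}\mathbb{E}\,\widehat{\mathcal{E}}_\mu(0)=:c_T$; no smallness of $\mu$ is needed here, and the factor $e^{K}$ is exactly what makes $c_T$ depend on $M$ and $T$.

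For the supremum bounds I take the supremum over $r\le t$ above: since $e^{A(r)}\le e^{K}$ pathwise, $\sup_{r\le t}\widehat{\mathcal{E}}_\mu(r)\le e^{K}\big(\widehat{\mathcal{E}}_\mu(0)+\sup_{r\le t}|\widetilde M(r)|\big)$, and by the Burkholder--Davis--Gundy inequality together with $\langle\widetilde M\rangle_t\le\langle M_\mu\rangle_t\le\mu\int_0^t\Vert\sigma(u_\mu)\Vert_{\mathcal{L}_2(H_Q,H)}^2\Vert v_\mu\Vert_H^2\,ds$,
\[
\mathbb{E}\sup_{r\le t}|\widetilde M(r)|\le c\,\mathbb{E}\Big[\big(\mu\sup_{s\le t}\Vert\sigma(u_\mu(s))\Vert_{\mathcal{L}_2(H_Q,H)}^2\big)^{1/2}\Big(\int_0^t\Vert v_\mu(s)\Vert_H^2\,ds\Big)^{1/2}\Big].
\]
This is the one place where $\mu$ must be small: bounding $\sup_s\Vert\sigma(u_\mu)\Vert_{\mathcal{L}_2}^2\le c\big(1+\sup_{r\le t}\widehat{\mathcal{E}}_\mu(r)\big)$ and invoking the already-established $\mathbb{E}\int_0^t\Vert v_\mu\Vert_H^2\,ds\le c_T$, Young's inequality gives $\mathbb{E}\sup_{r\le t}|\widetilde M(r)|\le c\mu\,\mathbb{E}\sup_{r\le t}\widehat{\mathcal{E}}_\mu(r)+c_T$, so choosing $\mu_T>0$ with $c\,e^{K}\mu_T<\tfrac12$ lets me absorb the supremum term on the left, for all $\mu\in(0,\mu_T)$. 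This yields $\mathbb{E}\sup_{r\le T}\widehat{\mathcal{E}}_\mu(r)\le c_T$, whence the four quantities in \eqref{sm92} follow from $\widehat{\mathcal{E}}_\mu\ge\tfrac12\Vert u_\mu\Vert_{H^1}^2+\tfrac\mu2\Vert v_\mu\Vert_H^2+c_2\Vert u_\mu\Vert_{L^{\theta+1}}^{\theta+1}$ together with the dissipation bound. The main obstacle is precisely the interaction of the random coefficient $\Vert\varphi\Vert_H^2$, which is only bounded in $L^1_t$ almost surely and hence forces a Gronwall-type argument, with the stochastic integral, which forces the use of BDG and therefore of expectations; the integrating factor resolves both at once, turning the Gronwall contribution into the harmless deterministic multiplier $e^{K}$ while keeping $\widetilde M$ a bona fide martingale, and the $\sqrt{\mu}$ in front of the noise supplies the smallness that finally closes the supremum estimate.
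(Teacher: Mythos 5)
Your proof is correct, but it follows a genuinely different route from the paper's. The paper runs \emph{two} coupled energy estimates: first an $L^2$-level bound obtained by testing against $u_\mu$ itself (Step 1, inequality \eqref{energy1}, following \cite[Lemma 4.1]{CX}), and then the $H^1$ wave-energy bound (Step 2), where the control and noise terms are estimated as in \eqref{sm39}--\eqref{sm40} by pulling $\sup_r\Vert u_\mu(r)\Vert_H^2$ out with a Young parameter $\delta$; the same inequality \eqref{sm104} is then read twice, once with $\delta$ small (giving \eqref{sm105}) and once with $\delta$ large so that the dissipation $\gamma_0\int\Vert\partial_t u_\mu\Vert_H^2$ absorbs the velocity integral (giving \eqref{sm106}), and the two outputs are fed back into \eqref{energy1}, where the smallness $\mu<\mu_T$ absorbs the $\mu$-weighted velocity terms and Gronwall's lemma yields the $H$-bound \eqref{sm101}, from which \eqref{sm92} follows. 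You dispense with the auxiliary $L^2$ estimate entirely: you absorb the control term directly into the friction dissipation by Young's inequality, accepting a drift term $\beta(s)\widehat{\mathcal{E}}_\mu(s)$ with a \emph{random} coefficient $\beta\in L^1(0,T)$ bounded pathwise by $K$ (since $\varphi\in\Lambda_{T,M}$), and you neutralize it with the pathwise integrating factor $e^{-A(t)}$ -- a stochastic-Gronwall device that appears nowhere in the paper. The two approaches also localize the role of $\mu$ differently: in the paper $\mu_T$ arises when combining \eqref{energy1} with \eqref{sm106}, whereas in your argument it arises only in the Burkholder--Davis--Gundy absorption of $\sup_r|\widetilde M(r)|$, and as a by-product you obtain $\int_0^T\mathbb{E}\Vert\partial_t u_\mu\Vert_H^2\,dt\le c_T$ with no smallness condition on $\mu$ at all. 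What your route buys is a single, self-contained functional with explicit constants ($e^K$ with $K=\tfrac{c_3}{2\gamma_0}M^2+\tfrac{c_3}{2}T$); what the paper's route buys is that it never requires the energy itself to dominate $\Vert\sigma(u_\mu)\Vert^2_{\mathcal{L}_2(H_Q,H)}$, which matters under Hypothesis \ref{Hypothesis3-bis}: there $\mathfrak f$ only satisfies a quadratic upper bound, so your coercivity claim $\widehat{\mathcal{E}}_\mu\ge\tfrac12\Vert u_\mu\Vert_{H^1}^2+\tfrac\mu2\Vert\partial_t u_\mu\Vert_H^2$ fails as stated and the term $-c\Vert u_\mu\Vert_H^2$ would have to be folded into the integrating-factor coefficient $\beta$ (which is straightforward but should be said); since the paper also writes out only the Hypothesis \ref{Hypothesis3} case, this is a parity issue rather than a gap. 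Your appeal to localization via stopping times and Fatou, and to the qualitative finiteness of $\mathbb{E}\sup_t\Vert(u_\mu,\partial_tu_\mu)(t)\Vert_{\mathcal{H}_1}^2$ from Theorem \ref{well-posedness} to justify the final absorption, is the right bookkeeping and matches what the paper implicitly relies on.
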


\begin{proof}
We give our proof in case Hypothesis \ref{Hypothesis3} holds and we leave to the reader the proof in case Hypothesis \ref{Hypothesis3-bis} holds. 

\bigskip

{\em Step 1.} There exists $c_T>0$ such that for all $\mu \in\,(0,1)$
\begin{equation}\label{energy1}
		\begin{array}{l}
			\ds{\mathbb{E}\sup_{r\in[0,t]}\Vert u_{\mu}(r)\Vert_{H}^2
			+\int_{0}^{t}\mathbb{E}\Vert u_{\mu}(s)\Vert_{H^{1}}^2\,ds}+\int_{0}^{t}\E\norm{u_{\mu}(s)}_{L^{\theta+1}}^{\theta+1}ds\\
			\vs
			\ds{\leq c_{T}\left(1+\int_{0}^{t}\mathbb{E}\Vert u_{\mu}(s)\Vert_{H}^2ds+\mu\int_{0}^{t}\mathbb{E}\Vert\partial_{t}u_{\mu}(s)\Vert_{H}^2ds+\mu^2\mathbb{E}\sup_{r\in[0,t]}\Vert\partial_{t}u_{\mu}(r)\Vert_{H}^2\right).}
		\end{array}
	\end{equation} 

\medskip
{\em Proof of Step 1.}
As shown in \cite[Proof of Lemma 4.1]{CX}, for every $\mu \in\,(0,1)$ we have
\begin{equation}
\label{sm301}\begin{array}{l}
		\ds{\frac{\gamma_{0}}{4}\Vert u_{\mu}(t)\Vert _{H}^2 +\int_{0}^{t}\Vert u_{\mu}(s)\Vert _{H^{1}}^2ds\leq}
		\ds{ \ c+c\mu^2\Vert \partial_{t}u_{\mu}(t)\Vert _{H}^2+\mu\int_{0}^{t}\Vert \partial_{t}u_{\mu}(s)\Vert _{H}^2ds}\\
		\vs
	\ds{+\int_{0}^{t}\langle F(u_{\mu}(s)),u_{\mu}(s)\rangle_{H}ds+\int_{0}^{t}\langle u_{\mu}(s),\sigma(u_{\mu}(s))Q\varphi_{\mu}(s)\rangle_{H}ds+\sqrt{\mu}\int_{0}^{t}\langle u_{\mu}(s),\sigma(u_{\mu}(s))dw^{Q}(s)\rangle_{H}.}
	\end{array}	
\end{equation}

Due to \eqref{sm145}, we have
\begin{equation}
\label{102}
\int_{0}^{t}\langle F(u_{\mu}(s)),u_{\mu}(s)\rangle_{H}ds\leq -c_{2}\int_{0}^{t}\norm{u_{\mu}(s)}_{L^{\theta+1}}^{\theta+1}ds+c_{2}t.
\end{equation}
	Moreover, by proceeding as in the proof of \eqref{sm39} and \eqref{sm40}, for every $\d>0$ we have
\[
\begin{array}{l}	
\ds{\mathbb{E}\sup_{r\in[0,t]}\left|\int_{0}^{r}\langle u_{\mu}(s),\sigma(u_{\mu}(s))Q\varphi_{\mu}(s)\rangle_{H}ds\right|+\sqrt{\mu}\,\mathbb{E}\sup_{r\in[0,t]}\left|\int_{0}^{t}\langle u_{\mu}(s),\sigma(u_{\mu}(s))dw^{Q}(s)\rangle_{H}\right|}\\
\vs
\ds{\leq \d 	\mathbb{E}\sup_{r\in[0,t]}\Vert u_{\mu}(r)\Vert_{H}^2+ \frac c\d \int_0^t \mathbb{E}\Vert u_{\mu}(s)\Vert_{H}^2\,ds.}
\end{array}
\]
Therefore, if we choose $\d>0$ sufficiently small above, this, together with  \eqref{102} and \eqref{sm301} allows to conclude that \eqref{energy1} holds true.

\bigskip
{\em Step 2.} For every $T>0$ there exist $c_T>0$ and $\mu_T>0$ such that
\begin{equation}
\label{sm101}
\sup_{\mu \in\,(0,\mu_T)} \mathbb{E}\sup_{t \in\,[0,T]}\Vert u_\mu(t)\Vert_{H}^2\leq c_T,	
\end{equation}
and \eqref{sm92} holds.

\medskip

{\em Proof of Step 2.}
As in the previous section, from It\^o's formula, we have
\[\begin{array}{l}
\ds{\Vert u_\mu(t)\Vert_{H^1}^2+\mu \Vert \partial_t u_\mu(t)\Vert^2_{H}=\Vert u_0\Vert_{H^1}^2+\mu \Vert v_0\Vert^2_{H}-\int_0^t \langle \gamma(u_\mu(s))\partial_t u_\mu(s),\partial_t u_\mu(s)\rangle_H\,ds}\\
\vs
\ds{+\int_{\mathcal{O}}\mathfrak{f}(x,u_\mu(t,x))\,dx-\int_{\mathcal{O}}\mathfrak{f}(x,u_0(x))\,dx+\int_0^t\langle \sigma(u_\mu(s))Q\varphi(s),\partial_t u_\mu(s)\rangle_H\,ds}\\
\vs
\ds{+\sqrt{\mu}\int_0^t\langle \sigma(u_\mu(s))dw^Q(s),\partial_t u_\mu(s)\rangle_H+\int_0^t\Vert \sigma(u_\mu(s))\Vert^2_{\mathcal{L}_2(H_Q,H)}\,ds.}	
\end{array}\]
Due to \eqref{sm4}, \eqref{nonlinearity assumption} and \eqref{sm12}, this gives
\[\begin{array}{l}
\ds{\Vert u_\mu(t)\Vert_{H^1}^2+c \Vert u_\mu(s)\Vert_{L^{\theta+1}}^{\theta+1}+\mu \Vert \partial_t u_\mu(t)\Vert^2_{H}+\gamma_0\int_0^t \Vert \partial_t u_\mu(s)\Vert^2_H\,ds}\\
\vs
\ds{\leq c\,\left(1+\Vert u_0\Vert_{H^1}^{\theta+1}+\mu \Vert v_0\Vert^2_{H}\right)+\int_0^t\Vert \si(u_\mu(s))\Vert^2_{\mathcal{L}_2(H_Q,H)}\,ds}\\
\vs
\ds{+\int_0^t\langle \sigma(u_\mu(s))Q\varphi(s),\partial_t u_\mu(s)\rangle_H\,ds+\sqrt{\mu}\int_0^t\langle \sigma(u_\mu(s))dw^Q(s),\partial_t u_\mu(s)\rangle_H.}	
\end{array}\]
Therefore, by proceeding as in Step 1, for every $\d>0$ we have
\begin{equation}
\label{sm104}
\begin{array}{l}
\ds{\mathbb{E}\sup_{r \in\,[0,t]}\Vert u_\mu(r)\Vert_{H^1}^2+\mathbb{E}\sup_{r \in\,[0,T]}\Vert u_\mu(r)\Vert_{L^{\theta+1}}^{\theta+1}+\mu\,\mathbb{E} \sup_{r \in\,[0,t]}\Vert \partial_t u_\mu(r)\Vert_{H}^2+\int_0^t\mathbb{E}\Vert \partial_t u_\mu(s)\Vert_{H}^2\,ds}	\\
\vs\ds{\leq c\,\left(1+\Vert u_0\Vert_{H^1}^{\theta+1}+\mu \Vert v_0\Vert^2_{H}\right)+\d \mathbb{E}\sup_{r \in\,[0,t]}\Vert u_\mu(r)\Vert_{H}^2+\frac c\d \int_0^t\mathbb{E}\Vert \partial_t u_\mu(s)\Vert_{H}^2\,ds+c\int_0^t\E\Vert u_{\mu}(s)\Vert_H^2\,ds.}
\end{array}	
\end{equation}
If we take $\d>0$ in \eqref{sm104},
we get
\begin{equation}
\label{sm105}
\begin{array}{l}
\ds{\mathbb{E}\sup_{r \in\,[0,t]}\Vert u_\mu(r)\Vert_{H^1}^2+\mathbb{E}\sup_{r \in\,[0,T]}\Vert u_\mu(r)\Vert_{L^{\theta+1}}^{\theta+1}+\mu\,\mathbb{E} \sup_{r \in\,[0,t]}\Vert \partial_t u_\mu(r)\Vert_{H}^2	}\\
\vs
\ds{\leq c\left(1+\int_0^t\mathbb{E}\Vert \partial_t u_\mu(s)\Vert_{H}^2\,ds\right)+c\int_0^t\E\Vert u_{\mu}(s)\Vert_H^2\,ds,}	
\end{array}
\end{equation}
and if we take $\d>0$ large enough we get
\begin{equation}
\label{sm106}
\mathbb{E}\sup_{r \in\,[0,T]}\Vert u_\mu(r)\Vert_{L^{\theta+1}}^{\theta+1}+\mu\,\mathbb{E} \sup_{r \in\,[0,t]}\Vert \partial_t u_\mu(r)\Vert_{H}^2+\int_0^t\mathbb{E}\Vert \partial_t u_\mu(s)\Vert_{H}^2\,ds\leq c\left(1+\mathbb{E}\sup_{r \in\,[0,t]}\Vert u_\mu(r)\Vert_{H}^2\right).	
\end{equation}
By combining together \eqref{energy1} and \eqref{sm106}, we can fix $\mu_T>0$ such that for every $\mu \in\,(0,\mu_T)$
\[\mathbb{E}\sup_{r\in[0,t]}\Vert u_{\mu}(r)\Vert_{H}^2\leq c_{T}\left(1+\int_{0}^{t}\mathbb{E}\Vert u_{\mu}(s)\Vert_{H}^2ds\right),\]
which implies \eqref{sm101}. Thus, from \eqref{sm101}, \eqref{sm105} and \eqref{sm106}, we obtain \eqref{sm92}.

\end{proof}

Now, for every $T>0$ and $\mu>0$ we define
\begin{equation*}
	\rho_{\mu}(t,x)=g(u_{\mu}(t,x)),\ \ \ (t,x)\in[0,T]\times \mathcal{O}.
\end{equation*}
According to Hypothesis \ref{Hypothesis2}, we know that 
\begin{equation*}
	|g(r)|\leq \gamma_{1}|r|,\ \ \ \ \ \ \ |g^\prime(r)|\leq \gamma_{1},\ \ \ \ \ \ r \in\,\mathbb{R}
\end{equation*}
so that for every $\mu>0$ and $t\in[0,T]$,
\begin{equation}\label{rho}
	\Vert \rho_{\mu}(t)\Vert_{H}\leq \gamma_{1}\Vert u_{\mu}(t)\Vert_{H},\ \ \ \ \ \ \ \ \ \  \Vert\rho_{\mu}(t)\Vert_{H^{1}}\leq \gamma_{1}\Vert u_{\mu}(t)\Vert_{H^{1}},\ \ \ \ \ \ \ \ \ \  \Vert \partial_t \rho_\mu(t)\Vert_{H}\leq \gamma_1\,\Vert\partial_tu_\mu(t)\Vert_H.
\end{equation}

Since the function $g$ is strictly increasing, it is invertible and  we have 
\begin{equation*}
	u_{\mu}(t,x)=g^{-1}(\rho_{\mu}(t,x)),\ \ \ (t,x)\in[0,T]\times \mathcal{O},
\end{equation*}

which implies that 
\begin{equation*}
	\Delta u_{\mu}(t,x)=\text{div}\left[\nabla g^{-1}(\rho_{\mu}(t))\right]=\text{div}\left[\frac{1}{\gamma(g^{-1}(\rho_{\mu}(t)))}\nabla\rho_{\mu}(t)\right].
\end{equation*}

Moreover, by the definition of $\rho_{\mu}$, 
\begin{equation*}
	\nabla \rho_{\mu}(t)=\gamma(u_{\mu}(t))\nabla u_{\mu}(t),\ \ \ \ \ \ \ \partial_{t}\rho_{\mu}(t)=\gamma(u_{\mu}(t))\partial_{t}u_{\mu}(t).
\end{equation*}

This means that if we integrate  equation (\ref{SPDE-controlled}) (with $\varphi_\mu$) with respect to $t$ we have
\begin{equation}\label{sto4}
	\begin{aligned}
		\rho_{\mu}(t)+\mu\partial_{t}u_{\mu}(t)
		& = g(u_{0})+\mu v_{0}+\int_{0}^{t}\text{div}\big[b(\rho_{\mu}(s))\nabla\rho_{\mu}(s)\big]ds+\int_{0}^{t}F_{g}(\rho_{\mu}(s))ds\\
		&\ \ \ \ +\int_{0}^{t}\sigma_{g}(\rho_{\mu}(s))Q\,\varphi_{\mu}(s)ds+\sqrt{\mu}\int_{0}^{t}\sigma_{g}(\rho_{\mu}(s))dw^{Q}(s),
	\end{aligned}
\end{equation}
where for every $r \in\,\mathbb{R}$ and $x \in\,\mathcal{O}$
\begin{equation} \label{end1}
	b(r):=\frac{1}{\gamma(g^{-1}(r))},\ \ \ \ \ \ \ \ f_{g}(x,r):=f(x,g^{-1}(r)),\end{equation}
and for every   $u \in\,H^1$
\begin{equation}  \label{end2}F_g(u)=f_g\circ u,\ \ \ \ \ \ \ \ \ \ \sigma_{g}(u):=\sigma(g^{-1}\circ u).\end{equation}
\begin{Theorem}
	\label{tightness}
Assume Hypotheses \ref{Hypothesis1} and \ref{Hypothesis2} and either Hypothesis \ref{Hypothesis3-bis} or Hypothesis \ref{Hypothesis3} and fix an arbitrary $T>0$ and $(u_0,v_0) \in\,\mathcal{H}_1$. Then, for any family of predictable controls $\{\varphi_\mu\}_{\mu \in\,(0,\mu_T)}\subset \Lambda_{T, M}$, the family of probabilities $\{\mathcal{L}(\rho_\mu)\}_{\mu \in\,(0,\mu_T)}$ is tight in $C([0,T];H^{\delta})$, for every $\d<1$.
\end{Theorem}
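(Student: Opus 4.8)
The plan is to establish tightness in $C([0,T];H^\delta)$ via the Aldous–Kolmogorov–type criterion, namely by showing that the laws are concentrated, uniformly in $\mu$, on a compact subset of $C([0,T];H^\delta)$. Since the embeddings $H^1 \hookrightarrow H^\delta \hookrightarrow H$ are compact for $\delta<1$, the natural compactness comes from a Sobolev bound in a higher space together with an equicontinuity-in-time estimate in a lower space; the standard tool is the compact embedding of $C^\alpha([0,T];H)\cap L^\infty(0,T;H^1)$ into $C([0,T];H^\delta)$ for suitable $\alpha$ and $\delta<1$. First I would record, from Lemma \ref{lemma5.1} together with the comparison estimates \eqref{rho}, the uniform bound $\mathbb{E}\sup_{t\in[0,T]}\Vert \rho_\mu(t)\Vert_{H^1}^2 \leq c_T$ for $\mu\in(0,\mu_T)$, which controls the spatial regularity uniformly.

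The main work is the time-regularity estimate, and here the decomposition \eqref{sto4} is the key. I would estimate the $H^{-1}$-increments (or $H$-increments of the regularized pieces) of $\rho_\mu(t)+\mu\partial_t u_\mu(t)$ term by term. The divergence term $\int_s^t \mathrm{div}[b(\rho_\mu)\nabla\rho_\mu]\,dr$ lies in $H^{-1}$ with norm controlled by $\Vert \rho_\mu\Vert_{H^1}$, hence contributes a Hölder-$1$ modulus in $H^{-1}$; the reaction term $F_g(\rho_\mu)$ and the controlled term $\sigma_g(\rho_\mu)Q\varphi_\mu$ are handled using the polynomial bound \eqref{sm9-n}, the linear growth \eqref{sm4}, the uniform bounds from Lemma \ref{lemma5.1}, and $\Vert\varphi_\mu\Vert_{L^2(0,T;H)}\leq M$, yielding a modulus of the form $(t-s)^{1/2}$ in $H$ (or $L^1$); the stochastic integral $\sqrt\mu\int_s^t \sigma_g(\rho_\mu)\,dw^Q$ is estimated by the Burkholder–Davis–Gundy inequality and carries the prefactor $\sqrt\mu$, which is harmless. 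Finally the term $\mu\,\partial_t u_\mu(t)$ must be shown to be negligible: by Lemma \ref{lemma5.1} we have $\mu\,\mathbb{E}\sup_t\Vert\partial_t u_\mu(t)\Vert_H^2\leq c_T$, so $\Vert\mu\,\partial_t u_\mu(t)\Vert_H\leq \sqrt\mu\,(\mu\Vert\partial_t u_\mu(t)\Vert_H^2)^{1/2}$ tends to zero in $\mu$ uniformly in expectation, and thus contributes no obstruction to equicontinuity.

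Assembling these bounds, I obtain a uniform fractional Sobolev-in-time estimate, e.g. $\mathbb{E}\Vert \rho_\mu\Vert_{W^{\alpha,2}(0,T;H^{-1})}^2\leq c_T$ for some $\alpha\in(0,1/2)$, together with $\mathbb{E}\Vert \rho_\mu\Vert_{L^2(0,T;H^1)}^2\leq c_T$. By the compactness of the embedding
\[
L^2(0,T;H^1)\cap W^{\alpha,2}(0,T;H^{-1}) \hookrightarrow C([0,T];H^\delta),\ \ \ \ \delta<1,
\]
(an interpolation/Aubin–Lions–Simon-type result), these uniform bounds force the laws $\{\mathcal{L}(\rho_\mu)\}_{\mu\in(0,\mu_T)}$ onto the balls of the intersection space, which are relatively compact in $C([0,T];H^\delta)$; a Chebyshev argument then gives tightness.

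The hard part will be the time-continuity estimate for the divergence term, because $b(\rho_\mu)\nabla\rho_\mu$ is only an $L^2$ object in space, so its time-increments live in $H^{-1}$ rather than $H$; one must therefore carry out the whole modulus-of-continuity argument in the weaker space $H^{-1}$ (or interpolate between $H^1$-control and $H^{-1}$-increments to land in $H^\delta$), and check carefully that the polynomial nonlinearity $F_g$ does not spoil integrability — this is precisely where the uniform estimate $\mathbb{E}\sup_t\Vert u_\mu(t)\Vert_{L^{\theta+1}}^{\theta+1}\leq c_T$ from \eqref{sm92} is needed. The treatment of the $\mu\,\partial_t u_\mu$ remainder, though conceptually simple, also requires care to ensure it does not obstruct the passage to a compact set uniformly in $\mu$.
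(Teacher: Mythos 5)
Your strategy takes the long way around and, as written, breaks at the final compactness step. The paper's own proof never touches the integrated equation \eqref{sto4}: since $\rho_\mu=g(u_\mu)$, the chain rule gives $\partial_t\rho_\mu=\gamma(u_\mu)\partial_t u_\mu$, so \eqref{rho} together with the last term in \eqref{sm92} yields directly the uniform bounds $\mathbb{E}\sup_{t\in[0,T]}\Vert\rho_\mu(t)\Vert_{H^1}^2\leq c_T$ and $\int_0^T\mathbb{E}\Vert\partial_t\rho_\mu(t)\Vert_H^2\,dt\leq c_T$ for $\mu\in(0,\mu_T)$. Chebyshev then puts $\rho_\mu$, with probability at least $1-\epsilon$, in a ball $K_\epsilon$ of $C([0,T];H^1)\cap W^{1,2}(0,T;H)$, and such a ball is compact in $C([0,T];H^\delta)$ for every $\delta<1$ (Aubin--Lions/Arzel\`a--Ascoli: the $W^{1,2}(0,T;H)$ bound gives $\Vert\rho(t)-\rho(s)\Vert_H\leq |t-s|^{1/2}\Vert\partial_t\rho\Vert_{L^2(0,T;H)}$, hence equicontinuity in $H$, hence, interpolating with the $C([0,T];H^1)$ bound, equicontinuity in $H^\delta$). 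None of your term-by-term increment estimates, the BDG bound for the stochastic integral, or the treatment of the remainder $\mu\,\partial_t u_\mu$ is needed: the whole point of introducing $\rho_\mu=g(u_\mu)$ is that its time derivative is controlled pathwise by $\partial_t u_\mu$, which Lemma \ref{lemma5.1} already bounds in $L^2(\Omega\times[0,T];H)$ uniformly in $\mu$.

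The genuine gap in your route is the claimed compact embedding $L^2(0,T;H^1)\cap W^{\alpha,2}(0,T;H^{-1})\hookrightarrow C([0,T];H^\delta)$ for $\alpha\in(0,1/2)$: this is false. For $\alpha<1/2$ the step function $t\mapsto \mathbf{1}_{[0,T/2]}(t)\,h$, with $h\in H^1$ fixed, has finite Gagliardo seminorm in $W^{\alpha,2}(0,T;H^{-1})$ and lies in $L^\infty(0,T;H^1)$, so the intersection is not even contained in $C([0,T];H^{-1})$; Aubin--Lions--Simon with these exponents only yields compactness into $L^2(0,T;H^\delta)$, whereas the theorem asserts tightness in a space of continuous functions. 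To land in $C([0,T];H^\delta)$ you need genuine equicontinuity, e.g.\ a bound in $C^\beta([0,T];H^{-1})$ or in $W^{\alpha,p}$ with $\alpha p>1$, and for the stochastic integral this requires moments of order $p>2$ (Kolmogorov/BDG with higher exponents), which the second-moment estimates of Lemma \ref{lemma5.1} do not provide without further work. A second, smaller problem: your argument makes $\mu\,\partial_t u_\mu$ small only in $C([0,T];H)$, while tightness is claimed in $C([0,T];H^\delta)$, whose norm is stronger; transferring tightness from $\rho_\mu+\mu\,\partial_t u_\mu$ to $\rho_\mu$ therefore needs an extra interpolation step (using the uniform $H^1$ bound), which you do not supply.
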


\begin{proof}
According to \eqref{sm92} and \eqref{rho}, we have
that	
\[\mathbb{E}\sup_{r\in[0,t]}\Vert \rho_{\mu}(r)\Vert_{H}^2
			+\int_{0}^{t}\mathbb{E}\Vert \partial_t\rho_{\mu}(s)\Vert_{H^{1}}^2\,ds\leq c_T,\ \ \ \ \ \mu \in\,(0,\mu_T).\]
This means that for every $\e>0$ there exists $L_\e>0$ such that if we denote by $K_\e$ the ball of radius $L_\e$ in $C([0,T];H^1)\cap W^{1,2}([0,T];H)$, then
\[\inf_{\mu \in\,(0,\mu_T)}\mathbb{P}(\rho_\mu \in\,K_\e)\geq 1-\e.\]
This allows to conclude as, due to the Aubin-Lions lemma, the set $K_\e$ is compact in $C([0,T];H^\d)$, for every $\d<1$.
\end{proof}

\section{The limit controlled problem}\label{sec6}

In order to prove conditions C1 and C2,  we need first to understand better the  controlled quasi-linear parabolic problem
\begin{equation}
\label{limit-det}
\left\{\begin{array}{l}
\ds{\partial_t \rho(t,x)=\text{div}\,[b(\rho(t,x))]+f_g(x,\rho(t,x))+\sigma_g(\rho(t,\cdot))\varphi(t,x),\ \ \ \ t>0,	\ \ \ \ \ x \in\,\mathcal{O},}\\
\vs\ds{\rho(0,x)=g(u_0(x)),\ \ \ \ \ \ \ \ \ \rho(t,x)=0,\ \ \ x \in\,\partial \mathcal{O}.}
\end{array}\right.
	\end{equation}

In view of Hypothesis \ref{Hypothesis2}, 
we have
\[\frac 1{\gamma_1}\,|r|\leq |g^{-1}(r)|\leq \frac{1}{\gamma_0},\ \ \ \ r \in\,\mathbb{R}.\]
When Hypothesis \ref{Hypothesis3} holds, thanks to \eqref{sm9}, this means that for every $u \in\,L^{\theta+1}([0,L])$ 
\begin{equation}
\label{sm120}
\Vert F_g(u)\Vert_{H^{-1}}\leq c\int_0^L(1+|g^{-1}(u(x))|^\theta)\,dx\leq c\left(1+\Vert u\Vert_H^{\frac 2{\theta-1}}\Vert u\Vert_{L^{\theta+1}}^{\frac{(\theta+1)(\theta-2)}{\theta-1}}\right).	
\end{equation}
Next, again due to Hypothesis \ref{Hypothesis2}
we have
\begin{equation*}
	\frac 1\gamma_{1}\leq \frac{g^{-1}(r)}{r}\leq \frac 1\gamma_{0},\ \ \ \ \ \ \  \ \ \ \ r \in\,\mathbb{R},
\end{equation*}
so that, thanks to  Hypothesis \ref{Hypothesis3}, we get,
\[
	f_{g}(r)r=f(g^{-1}(r))g^{-1}(r)\cdot\frac{r}{g^{-1}(r)}\leq c\left(1-|r|^{\theta+1}\right),\ \ \ \ \ \ r \in\,\mathbb{R},
\]
(here we define $g^{-1}(0)/0=1/\gamma(0)$). In particular, we have
\begin{equation}
\label{sm108}
\langle F_g(u),u\rangle_H\leq c\left(1-\Vert u\Vert^{\theta+1}_{L^{\theta+1}}\right),\ \ \ \ \ u \in\,H^1.	
\end{equation}
Moreover, since  $g^{-1}$ is  increasing and $f$ is decreasing, we have that $f_g$ is decreasing, so that 
\begin{equation}
\label{sm109}
\langle F_g(u_1)-F_g(u_2),u_1-u_2\rangle_H\leq 0.\end{equation}
Finally, thanks  to Hypotheses \ref{Hypothesis2} and \ref{Hypothesis1},
\begin{equation}
\label{sm112}
|b(r)|\leq \frac 1{\gamma_0},\ \ \ \ \ b(r)\geq \frac 1{\gamma_1},\ \ \ \ \ |b(r)-b(s)|\leq c\,|r-s|,\ \ \ \ \ r,s \in\,\mathbb{R},	\end{equation}
and
\begin{equation}
\label{sm113}
\Vert \si_g(u_1)-\si_g(u_2)\Vert_{\mathcal{L}(H_Q,H)}\leq c\,\Vert u_1-u_2\Vert_H,\ \ \ \ \ \ u_1, u_2 \in\,H.
	\end{equation}

\begin{Definition}
	A function $\rho \in\, L^2(0,T;H^1)$ is a weak solution to equation \eqref{limit-det} if for every test function $\psi \in\,C^\infty_0(\mathcal{O})$ and $t \in\,[0,T]$
	\begin{equation}
	\label{sm123}
		\begin{array}{ll}
			\ds{\langle\rho(t),\psi\rangle_{H}=} & \ds{\langle g(u_{0}),\psi\rangle_H-\int_{0}^{t}\langle b(\rho(s))\nabla\rho(s),\nabla\psi\rangle_{H}ds+\int_{0}^{t}\langle F_{g}(\rho(s))+\sigma_{g}(\rho(s))Q\varphi(s),\psi\rangle_{H}\,ds.}
		\end{array}
	\end{equation}
\end{Definition}

\begin{Theorem}
\label{teo-det}
Assume Hypotheses \ref{Hypothesis1} and \ref{Hypothesis2} and either Hypothesis \ref{Hypothesis3-bis} or Hypothesis \ref{Hypothesis3} and fix any $T>0$ and $	\varphi \in\,L^2(0,T;H)$. Then, for every $u_0 \in\,H^1$ there exists a unique weak solution $\rho$ to equation \eqref{limit-det} such that
\begin{equation}
\label{sm110}
\rho \in\,C([0,T];H)\cap L^2(0,T;H^1),\ \ \ \ \partial_t\rho \in\,L^2(0,T;H^{-1}).	
\end{equation}
\end{Theorem}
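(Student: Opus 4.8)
The plan is to exploit the potential structure of the quasilinear diffusion. Since $b=1/(\gamma\circ g^{-1})$ is the derivative of $B(r):=\int_0^r b(s)\,ds$, we have $b(\rho)\nabla\rho=\nabla B(\rho)$ and hence $\divg[b(\rho)\nabla\rho]=\Delta B(\rho)$, so that \eqref{limit-det} takes the filtration-type form $\partial_t\rho=\Delta B(\rho)+F_g(\rho)+\sigma_g(\rho)Q\varphi$. By \eqref{sm112} the map $B$ is strictly increasing and bi-Lipschitz, with $B'=b\in[1/\gamma_1,1/\gamma_0]$. I would work in the Gelfand triple $H^1\subset H\subset H^{-1}$, using that in dimension one $H^1\hookrightarrow L^\infty$, so that $F_g$ maps $H^1$ into $H$ with the growth \eqref{sm120} and that the dissipativity relations \eqref{sm108}, \eqref{sm109} are at our disposal.

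For existence I would run a Galerkin scheme on the eigenbasis $\{e_i\}$. Testing the projected equation against $\rho_n$ and using $b\geq 1/\gamma_1$ for the coercive gradient term, the dissipativity \eqref{sm108} for the reaction (which produces $-c\,\Vert\rho_n\Vert_{L^{\theta+1}}^{\theta+1}$), and the linear growth of $\sigma_g$ together with $\varphi\in L^2(0,T;H)$ to bound the control term by $c\,\Vert\varphi(t)\Vert_H(1+\Vert\rho_n\Vert_H^2)$, Gronwall's lemma yields uniform bounds for $\{\rho_n\}$ in $L^\infty(0,T;H)\cap L^2(0,T;H^1)\cap L^{\theta+1}((0,T)\times\mathcal O)$; from the equation and \eqref{sm120}, \eqref{sm112}, $\{\partial_t\rho_n\}$ is bounded in $L^{(\theta+1)/\theta}(0,T;H^{-1})$. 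The Aubin--Lions lemma then gives a subsequence with $\rho_n\to\rho$ strongly in $L^2(0,T;H)$ and a.e. No Minty argument is needed to pass to the limit in the nonlinear terms: since $B$ and $f_g$ are continuous Nemytskii maps, $B(\rho_n)\to B(\rho)$ in $L^2(0,T;H)$ and, being bounded in $L^2(0,T;H^1)$, converges weakly there, whence $\Delta B(\rho_n)\rightharpoonup\Delta B(\rho)$ in $L^2(0,T;H^{-1})$; likewise $F_g(\rho_n)\rightharpoonup F_g(\rho)$ by a.e. convergence plus the uniform $L^{\theta+1}$ bound, and $\sigma_g(\rho_n)\to\sigma_g(\rho)$ by \eqref{sm113}. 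The regularity \eqref{sm110}, the continuity $\rho\in C([0,T];H)$ and the initial condition $\rho(0)=g(u_0)$ then follow from $\rho\in L^2(0,T;H^1)$, $\partial_t\rho\in L^2(0,T;H^{-1})$ via the standard Lions--Magenes embedding.

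The main obstacle is uniqueness, precisely because the quasilinear operator $\rho\mapsto\Delta B(\rho)$ is \emph{not} monotone when tested in $H$: the cross term $\langle(\nabla b(\rho))\rho,\nabla\rho\rangle$ forces a local monotonicity constant of order $\Vert\nabla\rho_i\Vert_H^4$, which is not integrable in time under \eqref{sm110}. To circumvent this I would test the equation for the difference $\rho:=\rho_1-\rho_2$ against $(-\Delta)^{-1}\rho$, i.e. work in $H^{-1}$. The diffusion then contributes $-\langle B(\rho_1)-B(\rho_2),\rho_1-\rho_2\rangle\leq -\frac{1}{\gamma_1}\Vert\rho\Vert_H^2$, with \emph{no} local constant, exactly because $B'\geq 1/\gamma_1$; the control term is $\leq c\,\Vert\varphi(t)\Vert_H\Vert\rho\Vert_{H^{-1}}^2$ after Young's inequality and \eqref{sm113}. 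The reaction term $\langle F_g(\rho_1)-F_g(\rho_2),(-\Delta)^{-1}\rho\rangle$ no longer sees the monotonicity \eqref{sm109}, so I would estimate it by means of the $L^1$-Lipschitz bound \eqref{sm15-l1} transported to $F_g$ and the one-dimensional embedding $L^1\hookrightarrow H^{-1}$, obtaining $\varepsilon\Vert\rho\Vert_H^2+C\big(1+\Vert\rho_1\Vert_{L^{2(\theta-1)}}^{2(\theta-1)}+\Vert\rho_2\Vert_{L^{2(\theta-1)}}^{2(\theta-1)}\big)\Vert\rho\Vert_{H^{-1}}^2$; the term $\varepsilon\Vert\rho\Vert_H^2$ is absorbed into the diffusion coercivity, and a Gronwall argument in $\Vert\rho(t)\Vert_{H^{-1}}^2$ closes the uniqueness, provided the coefficient $t\mapsto\Vert\rho_i(t)\Vert_{L^{2(\theta-1)}}^{2(\theta-1)}$ belongs to $L^1(0,T)$.

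Verifying this last integrability is the genuinely delicate point and the place where the locally monotone theory of \cite{liu} must be extended: one controls $\Vert\rho_i\Vert_{L^{2(\theta-1)}}^{2(\theta-1)}$ in $L^1(0,T)$ by interpolating the a priori bounds $\rho_i\in L^\infty(0,T;H)\cap L^2(0,T;H^1)\cap L^{\theta+1}((0,T)\times\mathcal O)$ through one-dimensional Gagliardo--Nirenberg inequalities. Finally, under Hypothesis \ref{Hypothesis3-bis} the map $F_g$ is globally Lipschitz, the reaction term is estimated by $c\,\Vert\rho\Vert_H\Vert\rho\Vert_{H^{-1}}$ with no growing local constant, and both existence and uniqueness follow from the standard globally monotone variational theory; the argument above is required only for the polynomially growing nonlinearity of Hypothesis \ref{Hypothesis3}.
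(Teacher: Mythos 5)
Your existence argument is correct, and it is a genuinely different (and in this deterministic setting, arguably more elementary) route than the paper's: both run a Galerkin scheme with the same coercivity estimate, but where you invoke Aubin--Lions compactness and pass to the limit by strong convergence, the paper identifies the limit through the locally monotone, Minty-type machinery of \cite{liu}, built on the local monotonicity estimate \eqref{sm114}. The real divergence, however, is in uniqueness, and that is where your proposal has a genuine gap. The paper pairs the difference of two solutions with $\rho_1-\rho_2$ in $H$: there the decreasing reaction contributes with the good sign, $\langle F_g(\rho_1)-F_g(\rho_2),\rho_1-\rho_2\rangle_H\leq 0$ by \eqref{sm109}, so the $f_g$-term never has to be measured, and only the quasilinear cross term must be absorbed (this is exactly what \eqref{sm114} records, with a local constant $\Vert\rho_2\Vert_{H^1}^2$ that is integrable in time under \eqref{sm110}). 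You instead pair with $(-\Delta)^{-1}(\rho_1-\rho_2)$: the diffusion then becomes genuinely monotone thanks to $B'=b\geq 1/\gamma_1$, but the reaction loses its sign and must be estimated, and that trade is what breaks.

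Concretely, your Gronwall coefficient requires $t\mapsto\Vert\rho_i(t)\Vert_{L^{2(\theta-1)}}^{2(\theta-1)}$ to lie in $L^1(0,T)$, and you claim this follows by one-dimensional Gagliardo--Nirenberg from $\rho_i\in L^\infty(0,T;H)\cap L^2(0,T;H^1)\cap L^{\theta+1}((0,T)\times\mathcal{O})$. It does not once $\theta>4$. In $d=1$ one has $\Vert\rho\Vert_{L^q}^q\leq c\,\Vert\rho\Vert_{H^1}^{q/2-1}\Vert\rho\Vert_{H}^{q/2+1}$, so $L^\infty(0,T;H)\cap L^2(0,T;H^1)$ embeds into space--time $L^{q}$ only for $q\leq 6$ (the constraint $q/2-1\leq 2$), and since Lebesgue interpolation is convex in the reciprocal exponents, adding the space--time $L^{\theta+1}$ bound cannot produce any diagonal exponent beyond $\max(6,\theta+1)$. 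Because $2(\theta-1)>\max(6,\theta+1)$ exactly when $\theta>4$, and Hypothesis \ref{Hypothesis3} places no upper bound on $\theta$, your coefficient need not be integrable and the Gronwall argument does not close: the proposal proves the theorem under Hypothesis \ref{Hypothesis3-bis} (where your $H^{-1}$ argument is clean in any dimension) and under Hypothesis \ref{Hypothesis3} only for $\theta\leq 4$, not as stated. A secondary flaw: in the uniqueness class \eqref{sm110} you are not entitled to the $L^{\theta+1}((0,T)\times\mathcal{O})$ bound at all, since it is an a priori bound of the constructed solution rather than part of \eqref{sm110} (in $d=1$ this is harmless only because \eqref{sm110} already gives space--time $L^6$). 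To cover all $\theta$ one must arrange, as the paper does, that the reaction enters the uniqueness estimate only through its monotonicity, i.e. paired against $\rho_1-\rho_2$ itself; that forces the in-$H$ pairing, and the sharpened treatment of the diffusion cross term claimed in \eqref{sm114} --- precisely the term your proposal dismisses as producing a non-integrable $\Vert\rho_2\Vert_{H^1}^4$ --- is where the paper's argument does its real work.
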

\begin{proof}
We prove the result above in case Hypothesis \ref{Hypothesis3} is satisfied. We leave to the reader the proof in the case Hypothesis \ref{Hypothesis3-bis} is satisfied. For every $\varphi \in\,L^2(0,T;H)$ and $\psi \in\,H^1$, equation \eqref{sm123} can be written as
\[\langle\rho(t),\psi\rangle_{H}=\langle g(u_{0}),\psi\rangle_H+\int_0^t \langle S_\varphi(s,\rho(s)),\psi\rangle_H\,ds,\]
where we have defined
\[S_\varphi(t,\rho):=\text{div}\big[b(\rho)\nabla\rho\big]+F_{g}(\rho)+\sigma_{g}(\rho)Q\varphi(t),\ \ t\in[0,T],\ \ \ \ \ \rho\in H^{1}.
	\]
	Due to the presence of $\varphi$, the operator $S_\varphi$ does not satisfy the assumptions  of \cite[Theorem 1.1]{liu} and for this reason we cannot apply directly the well-posedness result proved therein. However, as we are going to show below, the operator $S_\varphi$ satisfies some generalized conditions that still allow to use techniques introduced in \cite{liu} to prove our result.
	
	First of all, we notice that for every $t \in\,[0,T]$ and $\rho_1, \rho_2, \rho \in\,H^1$ the mapping
	$s \in\,\mathbb{R} \mapsto \langle S_\varphi(t,\rho_1+s \rho_2),\rho\rangle_H,$
	is continuous, so that the operator $S_\varphi$ is {\em hemi-continuous}.
	
Thanks to \eqref{sm109}, \eqref{sm112} and \eqref{sm113},   for every  $t\in[0,T]$ and $\rho_{1},\rho_{2}\in\, H^{1}$
	\[\begin{array}{l}
	\ds{\langle\mathcal{S}_\varphi(t,\rho_{1})-\mathcal{S}_\varphi(t,\rho_{2}),\rho_{1}-\rho_{2}\rangle_{H}= \langle\text{div}\left[b(\rho_{1})\nabla\rho_{1}-b(\rho_{2})\nabla\rho_{2}\right],\rho_{1}-\rho_{2}\rangle_{H}}\\
		\vs
		\ds{+\langle F_{g}(\rho_{1})-F_{g}(\rho_{2}),\rho_{1}-\rho_{2}\rangle_{H} +\langle(\sigma_{g}(\rho_{1})-\sigma_{g}(\rho_{2}))Q\varphi(t),\rho_{1}-\rho_{2}\rangle_{H}}\\
		\vs
		\ds{= -\langle b(\rho_{1})\nabla(\rho_{1}-\rho_{2}),\nabla(\rho_{1}-\rho_{2})\rangle_{H} -\langle (b(\rho_{1})-b(\rho_{2}))\nabla\rho_{2},\nabla(\rho_{1}-\rho_{2})\rangle_{H}}\\
		\vs
		\ds{+\langle F_{g}(\rho_{1})-F_{g}(\rho_{2}),\rho_{1}-\rho_{2}\rangle_{H} +\langle(\sigma_{g}(\rho_{1})-\sigma_{g}(\rho_{2}))Q\varphi(t),\rho_{1}-\rho_{2}\rangle_{H}}\\
		\vs
		\ds{\leq -\frac{1}{\gamma_{1}}\Vert\rho_{1}-\rho_{2}\Vert_{H^{1}}^2+c\, \Vert\rho_{1}-\rho_{2}\Vert_{H}^2\,\Vert\rho_{2}\Vert_{H^{1}}^2 +\frac{1}{2\gamma_{1}}\Vert\rho_{1}-\rho_{2}\Vert_{H^{1}}^2+c\left(1+\Vert\varphi(t)\Vert_{H}^2\right)\Vert\rho_{1}-\rho_{2}\Vert_{H}^2}		\end{array}\]
		 so that
		\begin{equation}
		\label{sm114}
		\begin{array}{l}
		\ds{	\langle\mathcal{S}_\varphi(t,\rho_{1})-\mathcal{S}_\varphi(t,\rho_{2}),\rho_{1}-\rho_{2}\rangle_{H}\leq c\left(1+\Vert\varphi(t)\Vert^2_{H}+\Vert\rho_{2}\Vert_{H^{1}}^2\right)\Vert\rho_{1}-\rho_{2}\Vert_{H}^2.}               		\end{array}
\end{equation}

Moreover, thanks to \eqref{sm108}, for every $\e>0$ there exists $c_\e>0$ such that for every $t\in[0,T]$ and $\rho\in H^{1}$
	\begin{equation} 
	\label{115}	
		\begin{aligned}
		\ds{\langle\mathcal{S}_\varphi(t,\rho),\rho\rangle_{H}}
		&\ds{=\langle\text{div}\left[b(\rho)\nabla\rho\right],\rho\rangle_{H}+\langle F_{g}(\rho),\rho\rangle_{H}+\langle\sigma_{g}(\rho)Q\varphi(t),\rho\rangle_{H} }\\
		\vs
		&\ds{ \leq -\frac{1}{\gamma_{1}}\Vert\rho\Vert_{H^{1}}^2-c\,\Vert \rho\Vert_{L^{\theta+1}}^{\theta+1}+c_\e\,\,\Vert \rho\Vert_H^2+c+\e\,\Vert\varphi(t)\Vert^2_{H}\Vert\rho\Vert_H    ^2.}
	\end{aligned}
	\end{equation}

Finally, according to \eqref{sm120} and \eqref{sm112}, for every $t \in\,[0,T]$ and $\rho \in\,H^1$ 
\begin{equation}
\label{sm121}	
\Vert\mathcal{S}(t,\rho)\Vert_{H^{-1}}\leq c\,\left(\Vert\rho\Vert_{H^{1}}+\Vert\rho\Vert_{L^{\theta+1}}^{\theta+1}+\Vert \rho\Vert_H^2+\Vert \varphi(t)\Vert_H^2\right).	
\end{equation}

For every $n$, we denote  $H_n:=\text{span}\left\{e_1,\ldots,e_n\right\}$ and we denote by $P_n$ the projection of $H$ onto $H_n$.
Next, we introduce the finite dimensional problem
\[u^\prime_n(t)=P_n S_\varphi(t,\rho_n(t)),\ \ \ \ \ u_n(0)=P_n \rho(0) \in\,H_n.\]
Since $P_n S_\varphi$ is quasi-monotone and coercive in $H_n$, there exists a unique solution $\rho_n \in\,C([0,T];H_n)$, such that $\rho_n^\prime \in\,L^2(0,T;H_n)$.

Now, we are showing that there exists $c_T>0$ such that for every $n \in\,\mathbb{N}$
\begin{equation}
	\label{sm125}
	\sup_{t \in\,[0,T]}\Vert \rho_n(t)\Vert_H^2+\int_0^T \Vert \rho_n(t)\Vert^{\theta+1}_{L^{\theta+1}}\,dt+\int_0^T \Vert S_\varphi(t,\rho_n(t))\Vert_{H^{-1}}^2\,dt\leq c_T.
\end{equation}

 Due to \eqref{115}, we have
\[\begin{array}{l}
\ds{\Vert \rho_n(t)\Vert_H^2=\Vert \rho_n(0)\Vert_H^2+\int_0^t\langle S_\varphi(s,\rho_n(s))\rangle_H\,ds\leq \Vert \rho_n(0)\Vert_H^2+c\,T}\\
\vs
\ds{ -\frac{1}{\gamma_{1}}\int_0^t\Vert\rho_n(s)\Vert_{H^{1}}^2\,ds-c\int_0^t\Vert \rho_n(s)\Vert_{L^{\theta+1}}^{\theta+1}\,ds+c_\e\,\,\int_0\Vert \rho_n(s)\Vert_H^2\,ds+\e\,\int_0^t\Vert\varphi(s)\Vert^2_{H}\Vert\rho_n(s)\Vert_H^2\,ds.}	
\end{array}
\]
Hence, if we take $\bar{\e}=(2\Vert \varphi\Vert_{L^2(0,T;H)})^{-1}$, we get
\[\begin{array}{l}
\ds{\frac 12 \sup_{r \in\,[0,t]}\Vert \rho_n(r)\Vert_H^2+\frac{1}{\gamma_{1}}\int_0^t\Vert\rho_n(s)\Vert_{H^{1}}^2\,ds+c\int_0^t\Vert \rho_n(s)\Vert_{L^{\theta+1}}^{\theta+1}\,ds}\\
\vs
\ds{\leq \Vert \rho_n(0)\Vert_H^2+c\,T+c_{\bar{\e}}\,\,\int_0\Vert \rho_n(s)\Vert_H^2\,ds,}
	\end{array}\]
	and the Gronwall lemma implies
\begin{equation}
\label{sm126}
\sup_{r \in\,[0,T]}\Vert \rho_n(r)\Vert_H^2+\int_0^T\Vert\rho_n(s)\Vert_{H^{1}}^2\,ds+\int_0^T\Vert \rho_n(s)\Vert_{L^{\theta+1}}^{\theta+1}\,ds	\leq c_T.	
\end{equation}
Finally, according to \eqref{sm121} and \eqref{sm126}, we have
\[\int_0^T \Vert S_\varphi(t,\rho_n(t))\Vert_{H^{-1}}^2\,dt\leq c\int_0^t\left(\Vert\rho_n(t)\Vert_{H^{1}}\,+\Vert\rho_n(t)\Vert_{L^{\theta+1}}^{\theta+1}+\Vert \rho_n(t)\Vert_H^2\right)\,dt+c\,\Vert \varphi\Vert_{L^2(0,T;H)}^2,\]
and this, together with \eqref{sm126}, implies \eqref{sm125}.

As a consequence of \eqref{sm125}, we have that there exists a subsequence, still denoted by $\rho_n$, and there exists $\rho \in\,L^2(0,T;H^1)$, with  $\partial_t \rho \in\,L^2(0,T;H^{-1})$,  and $\eta \in\,L^2(0,T;H^{-1})$ such that, as $n\to\infty$,
\[\rho_n\rightharpoonup \rho,\ \ \ \text{in}\ \ L^2(0,T;H^1),\ \ \ \ \ \partial_t\rho_n\rightharpoonup \partial_t\rho,\ \ \ \text{in}\ \ L^2(0,T;H^{-1}),\]
and
\[S_\varphi(\cdot,\rho_n)\rightharpoonup \eta\ \ \ \text{in}\ \ L^2(0,T;H^1).\]
Thanks to \eqref{sm114}, we can use the same arguments used in  \cite[Lemma 2.4]{liu} to show that  $\eta=S_\varphi(\cdot,u)$, as elements of $L^2(0,T;H^{-1})$. Moreover we can  prove  that $\partial_t \rho(t)=\eta(t)$. Therefore, we  conclude that $\rho$ is a weak solution of equation 
\eqref{limit-det}.

As far as uniqueness is concerned, as in \cite{liu} it is again a consequence of \eqref{sm114}.

\end{proof}

\begin{Corollary}
Assume Hypotheses \ref{Hypothesis1} and \ref{Hypothesis2} and either Hypothesis \ref{Hypothesis3-bis} or Hypothesis \ref{Hypothesis3} and fix any $T>0$, $u_0 \in\,H^1$ and $	\varphi \in\,L^2(0,T;H)$. Then, if we define $u:=g^{-1}(\rho)$, where $\rho$ is the unique weak solution of equation \eqref{limit-det}, we have that $u$  is the unique weak solution to equation \eqref{sm18} and 
\begin{equation}
\label{sm130}
u \in\,C([0,T];H)\cap L^2(0,T;H^1),\ \ \ \ \partial_t u \in\,L^2(0,T;H^{-1}).	
	\end{equation}
	\end{Corollary}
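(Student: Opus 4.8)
The plan is to treat the pointwise substitution $\rho=g(u)$ as a genuine change of unknown and to verify that, under it, the weak formulation \eqref{sm123} of \eqref{limit-det} coincides \emph{verbatim} with the weak formulation of \eqref{sm18}, so that both existence and uniqueness are inherited from Theorem \ref{teo-det}. Recall that $u\in C([0,T];H)\cap L^2(0,T;H^1)$, with $\partial_t u\in L^2(0,T;H^{-1})$, is a weak solution of \eqref{sm18} if $u(0)=u_0$ and, for every $\psi\in C_0^\infty(\mathcal{O})$ and $t\in[0,T]$,
\begin{equation}
\label{weaksm18}
\langle g(u(t)),\psi\rangle_H=\langle g(u_0),\psi\rangle_H-\int_0^t\langle\nabla u(s),\nabla\psi\rangle_H\,ds+\int_0^t\langle F(u(s))+\sigma(u(s))Q\varphi(s),\psi\rangle_H\,ds,
\end{equation}
which is exactly the weak form obtained by testing $\gamma(u)\partial_t u=\partial_t g(u)=\Delta u+F(u)+\sigma(u)Q\varphi$ against $\psi$. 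The crux is that, by the definitions \eqref{end1}--\eqref{end2} of $b$, $f_g$ and $\sigma_g$, the right-hand side of \eqref{sm123} is literally the right-hand side of \eqref{weaksm18} once we set $u=g^{-1}(\rho)$.

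First I would record the spatial identities. Since $\gamma\in C^1_b(\mathbb{R})$ with $\gamma_0\leq\gamma\leq\gamma_1$, the primitive $g$ is a $C^2$ diffeomorphism of $\mathbb{R}$ with $g(0)=0$, $g'=\gamma$ and $(g^{-1})'=b=1/(\gamma\circ g^{-1})\in[1/\gamma_1,1/\gamma_0]$. As $g^{-1}$ is Lipschitz and vanishes at the origin, $\rho\in C([0,T];H)$ gives $u=g^{-1}(\rho)\in C([0,T];H)$; the Stampacchia chain rule for a $C^1$ function with bounded derivative composed with an $H^1$ function gives $u(t)\in H^1$ for a.e.\ $t$ with
\begin{equation}
\label{gradid}
\nabla u=(g^{-1})'(\rho)\,\nabla\rho=\frac{1}{\gamma(u)}\nabla\rho=b(\rho)\nabla\rho,
\end{equation}
so that $u\in L^2(0,T;H^1)$ and $b(\rho)\nabla\rho=\nabla u$, consistently with \eqref{rho}; and $F_g(\rho)=F(u)$, $\sigma_g(\rho)=\sigma(u)$ hold by definition. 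These identities turn \eqref{sm123} into \eqref{weaksm18}, so $u$ solves \eqref{sm18} weakly and the existence part follows from Theorem \ref{teo-det}.

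The delicate point is the time-derivative regularity $\partial_t u\in L^2(0,T;H^{-1})$ in \eqref{sm130}. I would obtain it from a chain rule in the space $\mathcal{W}:=\{w\in L^2(0,T;H^1):\partial_t w\in L^2(0,T;H^{-1})\}$ applied to the $C^1$ map $g^{-1}$: for a.e.\ $t$ and $\psi\in H^1_0(\mathcal{O})$ one sets $\langle\partial_t u(t),\psi\rangle=\langle\partial_t\rho(t),\psi/\gamma(u(t))\rangle$, which makes sense because $\gamma(u(t))\in H^1\cap L^\infty$ is bounded below by $\gamma_0$, so that $\psi/\gamma(u(t))\in H^1_0(\mathcal{O})$ and the identity $\gamma(u)\partial_t u=\partial_t\rho$ is recovered. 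The main obstacle is precisely the square-integrability in time of $t\mapsto\|\partial_t u(t)\|_{H^{-1}}$: this requires controlling the operator norm of $\psi\mapsto\psi/\gamma(u(t))$ on $H^1_0(\mathcal{O})$ against $\|\partial_t\rho(t)\|_{H^{-1}}$, and is where the a priori bounds of Theorem \ref{teo-det} must be used carefully, together with $H^1\hookrightarrow L^\infty$ under Hypothesis \ref{Hypothesis3} (and the boundedness of $F$ under Hypothesis \ref{Hypothesis3-bis}).

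Finally, for uniqueness I would run the change of variables backwards. If $u_1,u_2$ are two weak solutions of \eqref{sm18} in the class \eqref{sm130}, then $\rho_i:=g(u_i)$ lie in $C([0,T];H)\cap L^2(0,T;H^1)$ with $\partial_t\rho_i=\gamma(u_i)\partial_t u_i\in L^2(0,T;H^{-1})$ (the same chain rule, now in the easy direction since $g\in C^1_b$), share the initial datum $g(u_0)$, and satisfy \eqref{sm123}; hence $\rho_1=\rho_2$ by the uniqueness statement of Theorem \ref{teo-det}, and $u_1=g^{-1}(\rho_1)=g^{-1}(\rho_2)=u_2$.
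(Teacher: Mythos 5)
Your overall strategy is exactly the paper's: define $u=g^{-1}(\rho)$, use the Lipschitz chain rule in space to get $u\in C([0,T];H)\cap L^2(0,T;H^1)$ and $\nabla u=b(\rho)\nabla\rho$, observe that under this substitution the weak formulation \eqref{sm123} becomes the weak formulation of \eqref{sm18}, and transfer existence and uniqueness from Theorem \ref{teo-det} (the paper leaves the backward transfer for uniqueness implicit; spelling it out is fine). However, there is a genuine gap at precisely the step you yourself call ``the main obstacle,'' and it is not a matter of using the a priori bounds ``carefully'': they are insufficient. In $d=1$ your multiplication-operator estimate gives
\begin{equation*}
\Vert \partial_t u(t)\Vert_{H^{-1}}\;\leq\; c\left(1+\Vert u(t)\Vert_{H^1}\right)\Vert \partial_t\rho(t)\Vert_{H^{-1}},
\end{equation*}
while Theorem \ref{teo-det} provides only $\Vert \rho(\cdot)\Vert_{H^1}\in L^2(0,T)$ and $\Vert\partial_t\rho(\cdot)\Vert_{H^{-1}}\in L^2(0,T)$. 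The product of two $L^2$ functions of time is merely $L^1$, so this estimate yields $\partial_t u\in L^1(0,T;H^{-1})$, and its square need not be integrable; nothing in \eqref{sm110} supplies the missing $L^\infty(0,T;H^1)$ (or $L^4(0,T;H^1)$) control of $\rho$ that would close it. The same obstruction reappears, unacknowledged, in your uniqueness step: the claim that $\partial_t\rho_i=\gamma(u_i)\partial_t u_i\in L^2(0,T;H^{-1})$ holds ``in the easy direction'' uses multiplication by $\gamma(u_i(t))$, whose operator norm on $H^{-1}$ is again of order $1+\Vert u_i(t)\Vert_{H^1}$, so the product is again only $L^1$ in time.

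The paper's proof treats this step by a different device: it first identifies $S_\varphi(t,\rho(t))=\Delta u(t)+f(u(t))+\sigma(u(t))Q\varphi(t)$ in $H^{-1}$ using only the spatial chain rule, and then obtains the identity $\partial_t\rho=\gamma(u)\partial_t u$ together with $\partial_t u\in L^2(0,T;H^{-1})$ by mollifying $\rho$ with respect to $t$ and $x$ and passing to the limit, i.e.\ the time derivative of $u$ is constructed through approximation with $\partial_t\rho$ already identified with the right-hand side of the equation, rather than through the pointwise-in-time duality formula you propose. Two further remarks. First, under Hypothesis \ref{Hypothesis3-bis} the dimension $d$ is arbitrary, $H^1\not\hookrightarrow L^\infty$, and multiplication by $1/\gamma(u(t))$ --- a function that is only $H^1\cap L^\infty$ in space --- need not be a bounded operator on $H^1$ at all, so in that case your duality definition of $\partial_t u(t)$ is not even well posed; your parenthetical appeal to ``the boundedness of $F$'' is unrelated to this difficulty. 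Second, your identification of the two weak formulations and your reduction of uniqueness to Theorem \ref{teo-det} are correct as stated and match the paper; the defect is confined to, but is essential in, the proof of the regularity claim $\partial_t u\in L^2(0,T;H^{-1})$ in \eqref{sm130}.
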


\begin{proof}
We define $u=g^{-1}(\rho)$, where $\rho$ is the unique solution to equation \eqref{limit-det}. Due to the fact that $\rho$ satisfies \eqref{sm110} and $g^{-1}$ is differentiable with bounded derivative, we have that 
\[u \in\,C([0,T];H)\cap L^2(0,T;H^1),\]
and
\[
\nabla (g(u(t)	)=g^\prime(u(t))\nabla u(t)=\gamma(u(t))\nabla u(t).
\]
Recalling how $b$, $F_g$ and $\si_g$ were defined, this implies
\[\begin{array}{l}
\ds{	S_\varphi(t,\rho(t))=\text{div}\left[b(\rho(t))\nabla\rho(t)\right]+F_g(\rho(t))+\si_g(\rho(t))Q\varphi(t)}\\
\vs
\ds{=\text{div}\left[\frac 1{\gamma(u(t))}\nabla (g(u(t))\right]+f(u(t))+\si(u(t))Q\varphi(t)=\Delta u(t)+f(u(t))+\si(u(t))Q\varphi(t)}
\end{array}
\]
in $H^{-1}$ sense.
Moreover, by mollfyng  $\rho$ with respect to $t$ and $x$ and then by taking the limit, we have that \[\partial_t u \in\,L^2(0,T;H^{-1})\] and 
\[\partial_t\rho(t)=g^\prime(u(t))\partial_t u(t)=\gamma(u(t))\partial_t u(t),\]
in $H^{-1}$ sense. We can now conclude, as we know that $\partial_t\rho(t)=S_\varphi(t,\rho(t))$, in $H^{-1}$ sense.
\end{proof}

\section{Proof of Theorem \ref{mainteo}}

In order to prove Theorem \ref{mainteo}, we will show that the conditions C1 and C2 that we introduced in Section \ref{sec3} are both satisfied. We will consider here the case the nonlinearity $f$ satisfies Hypothesis \ref{Hypothesis3} and we leave to the reader to adapt out proof to the case $f$ satisfies Hypothesis \ref{Hypothesis3-bis}.
\begin{Theorem}
Under Hypotheses \ref{Hypothesis1} and \ref{Hypothesis2} and either Hypothesis \ref{Hypothesis3-bis} or Hypothesis \ref{Hypothesis3}, condition C1 holds.	
\end{Theorem}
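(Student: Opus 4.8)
The plan is to follow the classical Skorokhod-representation route that underlies the weak-convergence method. Fix a sequence $\mu_k\downarrow 0$; since the limit will turn out to be uniquely determined by $\varphi$, it suffices to extract from $\{u^{\varphi_{\mu_k}}_{\mu_k}\}_k$ a subsequence converging in distribution to $u^\varphi$. By Theorem \ref{tightness} the laws of $\rho_{\mu_k}=g(u^{\varphi_{\mu_k}}_{\mu_k})$ are tight in $C([0,T];H^\delta)$ for every $\delta<1$. Since each $\varphi_{\mu_k}$ takes values, $\mathbb{P}$-a.s., in the fixed weakly compact ball $\mathcal{S}_{T,M}$, the laws of $\{\varphi_{\mu_k}\}_k$ are tight in $L^2_w(0,T;H)$; and, by \eqref{contb}, $w^Q$ lives in $C([0,T];U)$. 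Hence the joint laws of $(\rho_{\mu_k},\varphi_{\mu_k},w^Q)$ are tight in the product space, and by Prokhorov's and Skorokhod's theorems I may pass to a subsequence and realise, on some auxiliary probability space $(\bar\Omega,\bar{\mathcal F},\bar{\mathbb P})$, copies $(\bar\rho_k,\bar\varphi_k,\bar w_k)$ with the same joint laws and with $(\bar\rho_k,\bar\varphi_k,\bar w_k)\to(\bar\rho,\bar\varphi,\bar w)$, $\bar{\mathbb P}$-a.s., in that product topology.

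Since the laws are preserved, the a-priori bounds of Lemma \ref{lemma5.1}, transported through \eqref{rho}, hold for $\bar\rho_k$ uniformly in $k$; in particular $\{\bar\rho_k\}_k$ is bounded in $L^2(\bar\Omega;C([0,T];H)\cap L^2(0,T;H^1))$ and $\bar u_k:=g^{-1}(\bar\rho_k)$ satisfies the bounds \eqref{sm92}. Passing to further subsequences, $\nabla\bar\rho_k\rightharpoonup\nabla\bar\rho$ weakly in $L^2(0,T;H)$ and $\bar\varphi_k\rightharpoonup\bar\varphi$ weakly in $L^2(0,T;H)$, while the almost-sure convergence in $C([0,T];H^\delta)$, combined with the uniform $H^1$-bound, upgrades by interpolation to strong convergence $\bar\rho_k\to\bar\rho$ in $C([0,T];H)$ and in $L^2(0,T;H)$, and to $\bar\rho_k\to\bar\rho$ a.e. on $[0,T]\times\mathcal{O}$.

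The heart of the argument is the passage to the limit in the identity \eqref{sto4} tested against $\psi\in C_0^\infty(\mathcal{O})$. The terms $\mu_k v_0$ and $\mu_k\partial_t\bar u_k$ vanish because $\mu_k\,\mathbb{E}\sup_{t\in[0,T]}\Vert\partial_t u_{\mu_k}\Vert_H^2\le c_T$ by \eqref{sm92}, and the stochastic term $\sqrt{\mu_k}\int_0^t\sigma_g(\bar\rho_k)\,d\bar w_k$ vanishes in $L^2(\bar\Omega)$ since its second moment is $O(\mu_k)$ by the linear growth of $\sigma_g$. The reaction term passes to the limit using the growth and monotonicity estimates \eqref{sm120} and \eqref{sm108} together with the strong convergence of $\bar\rho_k$. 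The two delicate terms are the quasi-linear diffusion $\int_0^t\langle b(\bar\rho_k)\nabla\bar\rho_k,\nabla\psi\rangle_H\,ds$ and the control term $\int_0^t\langle\sigma_g(\bar\rho_k)Q\bar\varphi_k,\psi\rangle_H\,ds$, each being a pairing of a strongly convergent factor against a weakly convergent one. For the first I write, using \eqref{sm112}, boundedness and dominated convergence, $b(\bar\rho_k)\nabla\psi\to b(\bar\rho)\nabla\psi$ strongly in $L^2(0,T;H)$ and pair it against the weak limit of $\nabla\bar\rho_k$; for the second I use \eqref{sm113} to get $Q^\star\sigma_g(\bar\rho_k)^\star\psi\to Q^\star\sigma_g(\bar\rho)^\star\psi$ strongly in $L^2(0,T;H)$ and pair it against $\bar\varphi_k\rightharpoonup\bar\varphi$. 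This strong-times-weak structure is the main obstacle, and it is precisely what makes the Aubin--Lions strong compactness of $\{\rho_\mu\}$ in Theorem \ref{tightness} indispensable.

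Passing to the limit shows that $\bar\rho$ is a weak solution of the deterministic controlled equation \eqref{limit-det} driven by $\bar\varphi$, in the sense of \eqref{sm123}. By the uniqueness part of Theorem \ref{teo-det}, this solution is a deterministic function of $\bar\varphi$; since $\bar\varphi$ and $\varphi$ have the same law, the law of $\bar\rho$ coincides with that of $g(u^\varphi)$, independently of the chosen subsequence. Since every subsequence of $\{\rho_{\mu_k}\}$ thus admits a further subsequence converging in distribution to the same limit, the whole family $\{\rho_\mu\}$ converges in distribution to $g(u^\varphi)$ in $C([0,T];H^\delta)$. Writing $u^{\varphi_\mu}_\mu=g^{-1}(\rho_\mu)$ and using the continuity of $g^{-1}$ together with the uniform bounds of \eqref{sm92} in $H^1$ (and in $L^{\theta+1}$), I then upgrade, through the embedding $H^1\hookrightarrow L^p$ and interpolation, to the convergence of $u^{\varphi_\mu}_\mu$ to $u^\varphi$ in the weak topology of $C([0,T];L^p(\mathcal{O}))$ for every admissible $p$. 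This is exactly condition C1.
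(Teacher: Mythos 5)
Your overall architecture — Skorokhod representation for the tuple $(\rho_{\mu_k},\varphi_{\mu_k},w^Q)$, term-by-term passage to the limit in \eqref{sto4}, identification of the limit through the uniqueness statement of Theorem \ref{teo-det}, and transfer back to $u_\mu=g^{-1}(\rho_\mu)$ — is the same as the paper's. The gap is in your treatment of the quasilinear diffusion term. You pair $b(\bar\rho_k)\nabla\psi$ (strongly convergent) against $\nabla\bar\rho_k$ (allegedly weakly convergent), but the claimed convergence $\nabla\bar\rho_k\rightharpoonup\nabla\bar\rho$ weakly in $L^2(0,T;H)$, $\bar{\mathbb{P}}$-a.s., is not available. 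Unlike the controls, which lie in the \emph{fixed} ball $\mathcal{S}_{T,M}$ almost surely (so a.s.\ weak convergence is genuinely part of the Skorokhod output and weak compactness can be invoked $\omega$ by $\omega$), the only bound on the gradients is in expectation: $\mathbb{E}\int_0^T\Vert\rho_{\mu_k}(t)\Vert_{H^1}^2\,dt\leq c_T$, from \eqref{sm92} and \eqref{rho}. This yields boundedness of $\Vert\nabla\bar\rho_k\Vert_{L^2(0,T;H)}$ only in probability; by Fatou one can extract, for a.e.\ \emph{fixed} $\omega$, an $\omega$-dependent subsequence along which weak convergence holds, but no subsequence independent of $\omega$. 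Nor did you include $\nabla\rho_{\mu_k}$, with the weak topology, in the Skorokhod tuple — which would in any case require a representation theorem valid for the non-metrizable space $L^2_w$ (Jakubowski-type), not the classical Skorokhod theorem you invoke. As written, the strong-times-weak pairing for the diffusion term is therefore unjustified, and this is precisely the term where your proof would fail.

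The paper avoids the issue entirely through an algebraic observation you missed: since $b=(g^{-1})'$, one has $b(\hat\rho_k)\nabla\hat\rho_k=\nabla g^{-1}(\hat\rho_k)=\nabla\hat u_k$, so that for $\psi$ smooth
\begin{equation*}
\int_0^t\langle b(\hat\rho_k(s))\nabla\hat\rho_k(s),\nabla\psi\rangle_H\,ds
=\int_0^t\langle \nabla\hat u_k(s),\nabla\psi\rangle_H\,ds
=-\int_0^t\langle \hat u_k(s),\Delta\psi\rangle_H\,ds,
\end{equation*}
and both derivatives fall on the test function. The convergence of this term then follows from the strong convergence of $\hat u_k=g^{-1}(\hat\rho_k)$ in $C([0,T];H)$, which is exactly what the tightness of Theorem \ref{tightness} supplies; no weak convergence of gradients is needed anywhere in the proof. (Your argument could in principle be repaired — for instance by observing that all the \emph{other} terms in the tested identity converge a.s.\ along the full sequence, so the diffusion term must converge too, and then identifying its limit through the $\omega$-wise weakly convergent subsequences — but this identification step is exactly what is missing from your write-up.) A secondary, minor point: for the reaction term the relevant estimate is \eqref{sm15-l1} extended to $F_g$, paired with $\Vert\psi\Vert_{H^1}$, rather than the coercivity and growth bounds \eqref{sm108} and \eqref{sm120} you cite, which play no role in this convergence.
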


\begin{proof}
Let us fix $T, M>0$ and let $\{\varphi_\mu\}$ be a family of processes in $\Lambda_{T, M}$
such that 
\[\lim_{\mu\to 0} \varphi_\mu=\varphi,\ \ \ \ \text{in distribution in}\ L_w^2(0,T;H),\]
where $L^2_w(0,T;H)$ is the space $L^2(0,T;H)$ endowed with the weak topology and  $\varphi \in\,\Lambda_{T,M}$. 

For every sequence $\{\mu_k\}_{k \in\,\mathbb{N}}$ converging to $0$, as $k\uparrow\infty$, we denote $\rho_k=g(u_k)$, where $u_k:=u^{\varphi_{\mu_k}}_{\mu_k}$ is the solution of equation \eqref{SPDE-controlled}, corresponding to the control $\varphi_{\mu_k}$. Thanks to Theorem \ref{tightness} and Lemma \ref{lemma5.1}, we have that the family
\[\left\{\mathcal{L}(\rho_{k},\ \mu_k\partial_t u_{k},\varphi_{\mu_k})\right\}_{k \in\,\mathbb{N}} \subset \mathcal{P}\left(C([0,T];H^\d)\times C([0,T];H)\times\mathcal{S}_{T, M}\right)\]
is tight. We denote by $\rho$ a weak limit point for the sequence $\{\rho_k\}_{k \in\,\mathbb{N}}$ and we denote
\[\Lambda:=C([0,T];H^\d)\times C([0,T];H)\times\mathcal{S}_{T, M}\times C([0,T],U)),\]
where $U$ is the Hilbert space such that \eqref{contb} holds. 
By the Skorokhod Theorem there exist  random variables
\[ \mathcal{Y}=(\hat{\rho}, 0,\hat{\varphi}, \hat{w}^Q),\ \ \ \ \ \ \ \ 
\mathcal{Y}_k=\left( \hat{\rho}_k,\hat{\theta}_k,\hat{\varphi}_k,\hat{w}^Q_k  \right),\ \ \ \ k \in\,\mathbb{N},\]
defined on a probability space $(\hat{\Omega}, \hat{\mathcal{F}}, \{\hat{\mathcal{F}}_t\}_{t \in\,[0,T]}, \hat{\mathbb{P}})$, such that
\[\mathcal{L}(\mathcal{Y})=\mathcal{L}(\rho, 0, \varphi,w^Q),\ \ \ \ \ \ \ \ \ \ \mathcal{L}(\mathcal{Y}_k)=\mathcal{L}(\rho_{k},\ \mu_k\partial_t u_{k},\varphi_{\mu_k},w^Q),\ \ \ \ \ k \in\,\mathbb{N},\]
and such that
\begin{equation}
\label{sm130}	
\lim_{k\to\infty}\mathcal{Y}_k=\mathcal{Y}\ \ \ \ \text{in}\ \ \Lambda,\ \ \ \ \ \ \ \hat{\mathbb{P}}-\text{a.s.}
\end{equation}

For every $k \in\,\mathbb{N}$ and $\psi \in\,H^2$, we have
\[\begin{array}{l}
\ds{\langle \hat{\rho}_{k}(t)+\hat{\theta}_{k}(t),\psi\rangle_H= \langle g(u_{0})+\mu_k v_{0},\psi\rangle_H+\int_{0}^{t}\langle \text{div}\left[b(\hat{\rho}_k(s))\nabla\hat{\rho}_k(s)\right],\psi\rangle_H\,ds}\\
\vs
\ds{+\int_{0}^{t}\langle F_{g}(\hat{\rho}_k(s)),\psi\rangle_H\,ds+\int_{0}^{t}\langle \sigma_{g}(\hat{\rho}_k(s))Q\,\hat{\varphi}_k(s),\psi\rangle_H\,ds+\sqrt{\mu_k}\int_{0}^{t}\langle \sigma_{g}(\hat{\rho}_{\mu_k}(s))d\hat{w}_k^{Q}(s),\psi\rangle_H.}
	\end{array}\]

Thanks to \eqref{sm130}, for every $t \in\,[0,T]$  we have
\begin{equation}
\label{sm144}
\lim_{k\to \infty}\langle \hat{\rho}_{k}(t)+\hat{\theta}_{k}(t),\psi\rangle_H=\langle \hat{\rho}(t),\psi\rangle_H,\ \ \ \ \ \hat{\mathbb{P}}-\text{a.s.}	
\end{equation}

Next, if we define $\hat{u}_k:=g^{-1}(\hat{r}_k)$ and $\hat{u}:=g^{-1}(\hat{r})$, we have
\[\begin{array}{l}
    \ds{\int_0^t \langle b(\hat{\rho}_k(s)) \nabla \hat{\rho}_k(s),\nabla \psi \rangle_H ds -\int_0^t \langle b(\hat{\rho}(s)) \nabla \hat{\rho}(s),\nabla \psi \rangle_H \,ds}\\
    \vs
    \ds{=\int_0^t \langle  \nabla \hat{u}_k(s),\nabla \psi \rangle_H ds -\int_0^t \langle \nabla \hat{u}(s),\nabla \psi \rangle_H ds=-\int_0^t \langle  (\hat{u}_k(s)-\hat{u}(s)),\Delta \psi \rangle_H ds.}
\end{array}\]
In particular, since \eqref{sm130} implies the $\mathbb{P}$-a.s. convergence of $\hat{u}_k$ to $\hat{u}$ in $C([0,T];H)$, we get that 
\begin{equation}\label{eq:gs68}
    \lim_{k\to \infty} \int_0^t \langle b(\hat{\rho}_k(s)) \nabla \hat{\rho}_k(s),\nabla \psi \rangle_H\, ds =\int_0^t \langle b(\hat{\rho}(s)) \nabla \hat{\rho}(s),\nabla \psi \rangle_H \, ds,\ \ \ \ \hat{\mathbb{P}}-\text{a.s.}
    \end{equation}

It is immediate to check that estimate \eqref{sm15-l1} extends to $F_g$. Thus we have
\[\begin{array}{l}
\ds{\left|\int_0^t\langle F_{g}(\hat{\rho}_k(s)),\psi\rangle_H\,ds-\int_0^t\langle F_{g}(\hat{\rho}(s)),\psi\rangle_H\,ds\right|\leq \int_0^t\Vert F_{g}(\hat{\rho}_k(s))-F_{g}(\hat{\rho}(s))\Vert_{L^1}\,ds\,\Vert \psi\Vert_{H^1}}	\\
\vs\ds{\leq c\int_0^t \left(1+\Vert \hat{\rho}_k(s)\Vert_{L^{2(\theta-1)}}^{\theta-1}+ \Vert \hat{\rho}(s)\Vert_{L^{2(\theta-1)}}^{\theta-1}\right)\Vert \hat{\rho}_k(s)-\hat{\rho}(s)\Vert_H\,ds\,\Vert \psi\Vert_{H^1},}
\end{array}\]
so that, thanks to \eqref{sm130}, 
\begin{equation}
\label{sm140}
\lim_{k\to\infty}\int_0^t\langle F_{g}(\hat{\rho}_k(s)),\psi\rangle_H\,ds=\int_0^t\langle F_{g}(\hat{\rho}(s)),\psi\rangle_H\,ds,\ \ \ \ \ \hat{\mathbb{P}	}-\text{a.s.}
\end{equation}

Now,  for any $h\in L^{2}(0,T;H)$, 
	\begin{align*}
		\left|\int_{0}^{t}\langle\sigma_{g}(\hat{\rho}(s))Qh(s),\psi\rangle_{H} ds\right|
		\leq c\Vert\psi\Vert_{H}\left(\int_{0}^{T}(1+\Vert\hat{\rho}(s)\Vert_{H}^{2})ds\right)^{\frac{1}{2}}\left(\int_{0}^{T}\Vert h(s)\Vert_{H}^{2}ds\right)^{\frac{1}{2}}
	\end{align*}
and this implies that the mapping
		\begin{equation*}
		h \in\,L^2(0,t;H)\mapsto \int_{0}^{t}\langle\sigma_{g}(\hat{\rho}(s)))Qh(s),\psi\rangle_{H}\, ds \in\,\mathbb{R}
	\end{equation*}
	is a linear functional so that, thanks to \eqref{sm130}
	\begin{equation}
	\label{sn141}
	\lim_{k\to\infty}\int_{0}^{t}\langle\sigma_{g}(\hat{\rho}(s))Q\hat{\varphi}_k(s),\psi\rangle_{H} ds=\int_{0}^{t}\langle\sigma_{g}(\hat{\rho}(s))Q\hat{\varphi}(s),\psi\rangle_{H}\, ds,\ \ \ \ \ \hat{\mathbb{P}}-\text{a.s.}	
	\end{equation}
Moreover, we have
\[\begin{array}{l}
\ds{\left|\int_{0}^{t}\langle(\sigma_{g}(\hat{\rho}_k(s))-\sigma_{g}(\hat{\rho}(s)))Q\hat{\varphi}_k(s),\psi\rangle_{H} ds\right|}\\
\vs
\ds{\leq \Vert \psi\Vert_H\int_0^t\Vert \sigma_{g}(\hat{\rho}_k(s))-\sigma_{g}(\hat{\rho}(s))\Vert_{\mathcal{L}(H_Q,H)}\Vert\hat{\varphi}_k(s)\Vert_H\,ds\leq c\,\Vert \psi\Vert_H \Vert \hat{\rho}_k-\hat{\rho}\Vert_{L^2(0,T;H)}\Vert\hat{\varphi}_k\Vert_{L^2(0,T;H)},}	
\end{array}\]
and by using again \eqref{sm130} we get
\[\lim_{k\to\infty} \int_{0}^{t}\langle(\sigma_{g}(\hat{\rho}_k(s))-\sigma_{g}(\hat{\rho}(s)))Q\hat{\varphi}_k(s),\psi\rangle_{H} ds=0,\ \ \ \ \ \ \ \hat{\mathbb{P}}-\text{a.s.}\]
This, together with \eqref{sn141}, implies
\begin{equation}
\label{sm143}
\lim_{k\to\infty}\int_{0}^{t}\langle\sigma_{g}(\hat{\rho}_k(s))Q\hat{\varphi}_k(s),\psi\rangle_{H} ds=	\int_{0}^{t}\langle\sigma_{g}(\hat{\rho}(s))Q\hat{\varphi}(s),\psi\rangle_{H} ds,\ \ \ \ \ \ \ \ \hat{\mathbb{P}}-\text{a.s.}
\end{equation}

Finally, since
\[\sup_{k \in\,\mathbb{N}}\,\hat{\mathbb{E}}\sup_{t \in\,[0,T]}	\left|\int_{0}^{t}\langle \sigma_{g}(\hat{\rho}_{\mu_k}(s))d\hat{w}_k^{Q}(s),\psi\rangle_H\right|	^2\leq c\,\sup_{k \in\,\mathbb{N}}\,\Vert\psi\Vert_H^2\int_0^T\left(1+\hat{\mathbb{E}}\Vert \hat{\rho}_{\mu_k}(s)\Vert_H^2\right)\,ds<\infty,\]
we conclude that
\[\lim_{k\to \infty}\sqrt{\mu_k}\,\sup_{t \in\,[0,T]}	\hat{\mathbb{E}}\left|\int_{0}^{t}\langle \sigma_{g}(\hat{\rho}_{\mu_k}(s))d\hat{w}_k^{Q}(s),\psi\rangle_H\right|	^2=0.\]

This, together with \eqref{sm144}, \eqref{eq:gs68}, \eqref{sm140} and \eqref{sm143} implies that 
\[\begin{array}{ll}
			\ds{\langle\hat{\rho}(t),\psi\rangle_{H}=} & \ds{\langle g(u_{0}),\psi\rangle_H-\int_{0}^{t}\langle b(\hat{\rho}(s))\nabla\hat{\rho}(s),\nabla\psi\rangle_{H}ds}\\
			\vs
			&\ds{\int_{0}^{t}\left[\langle F_{g}(\hat{\rho}(s)),\psi\rangle_{H}+\langle\sigma_{g}(\hat{\rho}(s))Q\hat{\varphi}(s),\psi\rangle_{H}\right]\,ds.}
		\end{array}\]
		As proven in Theorem \ref{teo-det}, the equation above has a unique solution. Thus,  for every sequence $\{\mu_k\}_{k \in\,\mathbb{N}}\downarrow 0$ the sequence $\{\rho_k\}_{k \in\,\mathbb{N}}$ converges in distribution to the solution $\rho$ of equation \eqref{limit-det} with respect to the strong topology of $C([0,T];H^\d)$, for any arbitrary $\d<1$, and hence with respect to the strong topology of $C([0,T];L^p(\mathcal{O}))$, for every $p<\infty$, if $d=1,2$ and $p<2d/(d-2)$, if $d>2$.
	In particular, this implies that the sequence $\{u_k\}_{k \in\,\mathbb{N}}$ converges in distribution to the solution $u^\varphi$ of equation \eqref{sm18} with respect to the strong topology  of $C([0,T];L^p(\mathcal{O}))$ and condition C1 holds.
	
	As a consequence of the arguments used above to prove condition C1, we have that the mapping
	\[\varphi \in\,L^2_w(0,T;H)\mapsto u^\varphi \in\,C([0,T];L^p(\mathcal{O}))\]
	is continuous. Therefore, since $\Lambda_{T,M}$ is compact in $L^2_w(0,T;H)$, for every $M>0$, we have that 
	\[\Phi_{T,R}=\left\{I_T\leq R\right\}=\{ u^\varphi\ :\ \varphi \in\,\Lambda_{T,2R^2}\} \]
	is compact, and condition C2 follows.
\end{proof}

%%%%%%%%%%%%%%%%%%%%%%%%%%%%%%%%%%%%%

\appendix

\section{The small mass limit for system \eqref{SPDE1}}
\label{A1}

The Smoluchowski-Kramers approximation for system \eqref{SPDE1} has been studied in \cite{CX}, in the case  $f$ is Lipschitz-continuous and  $\sigma$ is bounded. Here, we prove  an analogous result when $\sigma$ is unbounded and $f$ has polynomial growth (see Hypothesis \ref{Hypothesis3}).

In what follows, we shall assume that  Hypotheses \ref{Hypothesis1}, \ref{Hypothesis2} and Hypothesis \ref{Hypothesis3} hold. By applying Theorem \ref{well-posedness} with the control $\varphi=0$, we have that for every $T>0$ and every $(u_{0},v_{0})\in\H_{1}$ there exists a unique adapted solution  $u_\mu$ to equation \eqref{SPDE1}, such that $(u_{\mu},\partial_{t}u_{\mu})\in L^{2}(\Omega;C([0,T];\H_{1}))$.
Now, we take $\theta\in(1,3)$ and for every $a\in[0,1)$ and $\delta>0$ we denote 
\begin{equation*}
	X_{1}(a):=\bigcap_{q<q(a)}L^{q}(0,T;H^{a}),\ \ \ X_{2}(\delta):=\bigcap_{p<\theta+1}L^{p}(0,T;H^{-\delta}),\ \ \ X_{3}(a):=\bigcap_{p<2/a}L^{p}(0,T;H^{a}),
\end{equation*} 
where 
\begin{equation*}
	q(a):=\frac{2(\theta+1)}{2+(\theta-1)a}.
\end{equation*}

\begin{thm}\label{sm}
	Assume Hypotheses \ref{Hypothesis1}, \ref{Hypothesis2} and \ref{Hypothesis3} hold, and assume $\theta\in(1,3)$. For every $\mu>0$, let $u_{\mu}$ denote the unique solution to equation \eqref{SPDE1}, with the  initial conditions $(u_{0},v_{0})\in\mathcal{H}_{1}$.
	
	\begin{enumerate}
	
	\item For every $a\in[0,1)$ and $\delta>0$, and for every $\eta>0$ we have
	\begin{equation*}
		\lim_{\mu\to0}\mathbb{P}\Big(\norm{u_{\mu}-u}_{X_{1}(a)}+\norm{u_{\mu}-u}_{X_{2}(\delta)}>\eta\Big)=0,
	\end{equation*}  
	
	where $u\in L^{2}(\Omega;L^{2}([0,T];H^{1}))$ is the unique solution to equation \eqref{SPDE3}, with initial datum $u_{0}$.\\
	
	\item If we assume also that
	\begin{equation}\label{sigma_poly}
		\norm{\sigma(h)}_{\mathcal{L}_{2}(H_{Q},H)}\leq c\big(1+\norm{h}_{H}^{\rho}\big),\ \ \forall h\in H,
	\end{equation}

	for some $\rho\in[0,(\theta+1)/4)$, then for every $a\in[0,1)$ we have
	\begin{equation*}
		\lim_{\mu\to0}\mathbb{P}\Big(\norm{u_{\mu}-u}_{X_{3}(a)}>\eta\Big)=0.
	\end{equation*}  
	
	\end{enumerate}
\end{thm}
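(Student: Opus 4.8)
The plan is to run the same change-of-variables scheme used for condition C1, but now in the genuinely stochastic regime where the noise has order one rather than $\sqrt{\mu}$. Setting $\rho_\mu:=g(u_\mu)$ and using that $u_\mu$ is differentiable in time, so that $\partial_t\rho_\mu=\gamma(u_\mu)\partial_t u_\mu$ by the ordinary chain rule with no It\^o correction and $\Delta u_\mu=\divg[b(\rho_\mu)\nabla\rho_\mu]$, integration of \eqref{SPDE1} in time yields
\[
\rho_\mu(t)+\mu\,\partial_t u_\mu(t)=g(u_0)+\mu v_0+\int_0^t\divg[b(\rho_\mu(s))\nabla\rho_\mu(s)]\,ds+\int_0^t F_g(\rho_\mu(s))\,ds+\int_0^t\sigma_g(\rho_\mu(s))\,dw^Q(s),
\]
the analogue of \eqref{sto4} with $\varphi=0$ and a stochastic integral now of full strength. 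The decisive point is that this $\rho$-equation carries no correction term: the extra drift $-\tfrac{\gamma'}{2\gamma^{2}}\sum_i|\sigma(u)Qe_i|^2$ of \eqref{SPDE3} is generated only upon returning to $u=g^{-1}(\rho)$. Indeed, in the limit $\rho$ becomes a genuine It\^o process, and the It\^o term $\tfrac12(g^{-1})''(\rho)\,d\langle\rho\rangle$—with $(g^{-1})''(\rho)=-\gamma'(u)/\gamma^{3}(u)$ for $u=g^{-1}(\rho)$ and $d\langle\rho(x)\rangle=\sum_i|\sigma_i(x,u(x))|^2\,dt$—reproduces exactly that term after multiplication by $\gamma(u)$. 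So the heart of the matter is to show $\rho_\mu\to\rho$, where $\rho$ solves the correction-free equation $d\rho=[\divg(b(\rho)\nabla\rho)+F_g(\rho)]\,dt+\sigma_g(\rho)\,dw^Q$, and then to invoke It\^o's formula.

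The next step, uniform-in-$\mu$ a priori bounds, is where the order-one noise forces a genuine departure from the main text. Testing \eqref{SPDE1} against $u_\mu$ as in \eqref{sm301} still closes, because the resulting martingale $\int_0^t\langle u_\mu,\sigma(u_\mu)\,dw^Q\rangle$ only involves $\sup_t\|u_\mu\|_H$; using the dissipativity \eqref{sm145} and the linear growth \eqref{sm4}, and controlling the $\mu$-weighted velocity terms $\mu^2\|\partial_t u_\mu\|_H^2$ and $\mu\int_0^t\|\partial_t u_\mu\|_H^2\,ds$ through the energy identity, I expect the parabolic-type bound
\[
\mathbb{E}\sup_{t\in[0,T]}\|u_\mu(t)\|_H^2+\mathbb{E}\int_0^T\|u_\mu(t)\|_{H^1}^2\,dt+\mathbb{E}\int_0^T\|u_\mu(t)\|_{L^{\theta+1}}^{\theta+1}\,dt\leq c_T,
\]
uniformly in $\mu\in(0,1)$, with $\rho_\mu$ inheriting the same bounds via \eqref{rho}. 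What is \emph{not} available—unlike in Lemma \ref{lemma5.1}, where the $\sqrt\mu$ scaling tamed the stochastic term—is a uniform $L^\infty(0,T;H^1)$ bound: in the $H^1$-energy the martingale couples to $\partial_t u_\mu$, whose supremum is only of order $\mu^{-1/2}$. This is exactly why one must settle for convergence in the weaker, interpolated scales $X_1,X_2,X_3$.

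From these bounds one reads off the functional spaces. Interpolating the uniform bound $L^{\theta+1}(0,T;L^2)\cap L^2(0,T;H^1)$ at spatial level $a$ gives exactly the Lebesgue exponent $q(a)=2(\theta+1)/(2+(\theta-1)a)$, hence the space $X_1(a)$, while $u_\mu$ bounded in $L^{\theta+1}(0,T;L^2)\hookrightarrow L^{\theta+1}(0,T;H^{-\delta})$ yields $X_2(\delta)$. With tightness of $\{\mathcal{L}(\rho_\mu)\}$ in these spaces established—using the spatial bounds together with a fractional-in-time estimate read off the integrated equation, in the spirit of a stochastic Aubin--Lions criterion—I would then run the Skorokhod representation exactly as in the proof of condition C1: pass to a.s.\ convergent copies, send $\mu_k\,\partial_t\hat u_k\to0$, use the strong convergence of $\hat u_k=g^{-1}(\hat\rho_k)$ to take the limit in the two nonlinear terms (reusing \eqref{eq:gs68} and the extension of \eqref{sm15-l1} to $F_g$) and the standard martingale identification for the stochastic integral, identifying $\hat\rho$ as the unique solution of the correction-free equation. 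Then $\hat u:=g^{-1}(\hat\rho)$ solves \eqref{SPDE3} by It\^o's formula, and uniqueness for \eqref{SPDE3} makes the limit independent of the subsequence, upgrading the convergence to convergence in probability; this is part (1). For part (2) the polynomial bound \eqref{sigma_poly} is used to promote the estimate to a higher-moment, sup-in-time $L^2$ bound, whose quadratic-variation term grows like $\|u_\mu\|_H^{2\rho}$: the threshold $\rho<(\theta+1)/4$ is exactly what lets this be absorbed, after Young's inequality, by the $L^{\theta+1}$-dissipation from $f$, and interpolating the resulting $L^\infty(0,T;L^2)\cap L^2(0,T;H^1)$ bound produces the exponent $p<2/a$ of $X_3(a)$.

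The step I expect to be the main obstacle is precisely this a priori analysis together with the design of $X_1,X_2,X_3$. Since the $H^1$-energy is unavailable in the small-mass limit with order-one noise, one cannot work in a single Sobolev space: one must track the $L^\infty(H)$, $L^2(H^1)$ and $L^{\theta+1}(L^{\theta+1})$ norms simultaneously, control the $\mu$-weighted velocity contributions, and recover compactness only in the interpolated scales. Verifying that the order-one stochastic forcing is dominated by the dissipation produced by the polynomially growing $f$—which is where the restriction $\theta<3$ enters, and in part (2) the sharp threshold $\rho<(\theta+1)/4$—is the delicate point; once these estimates are in hand, the tightness and the limit identification follow the template already developed for condition C1.
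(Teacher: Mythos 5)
Your part~(1) is essentially the paper's own proof: the substitution $\rho_\mu=g(u_\mu)$ leading to the integrated identity \eqref{sto5}, the key observation that the $\rho$-equation is correction-free and that the extra drift in \eqref{SPDE3} is created only when returning to $u=g^{-1}(\rho)$ through It\^o's formula, the uniform parabolic bound \eqref{est1} coupled with an $H^1$/velocity bound that degenerates as $\mu\to0$, tightness in the interpolated scales via fractional-in-time estimates and Simon's compactness results, Skorokhod representation, and identification of the limit with the unique solution of \eqref{limit_para}. Three small corrections: under \eqref{est2} the supremum of $\Vert\partial_t u_\mu\Vert_H$ is of order $\mu^{-1}$, not $\mu^{-1/2}$; the restriction $\theta<3$ is used by the paper not in the energy estimates but in passing to the limit in the nonlinear term, where the exponents of \eqref{condition_pq} must be compatible (see \eqref{key3}); and ``identifying $\hat\rho$ as the unique solution'' presupposes well-posedness of the limiting quasilinear \emph{stochastic} equation, which is itself a theorem the paper must prove (Theorem \ref{well-posedness-para}, via the locally monotone framework).

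Part~(2), however, has a genuine gap. The crux there is not a better bound on $u_\mu$: the sup-in-time $L^2$ bound and the $L^2(0,T;H^1)$ bound in \eqref{est1} already hold uniformly under the standing linear-growth hypothesis, with no use of \eqref{sigma_poly}, so the hypothesis cannot be ``what promotes the estimate to a sup-in-time $L^2$ bound.'' What is actually missing for tightness in $X_3(a)$ is control of the remainder $\mu\partial_t u_\mu$ in strong-in-time norms: the time-equicontinuity read off the equation concerns $\rho_\mu+\mu\partial_t u_\mu$, and the paper pushes the time exponent past $\theta+1$ only because \eqref{est4} gives $\mu\partial_t u_\mu\to0$ in $L^\infty(0,T;H)$, which in turn requires improving the energy rate from $\mathbb{E}\sup_t\big[\Vert u_\mu\Vert^2_{H^1}+\mu\Vert\partial_t u_\mu\Vert^2_H\big]\leq c_T\mu^{-1}$ (that is, \eqref{est2}) to $c_T\mu^{-\beta}$ with $\beta<1$ (that is, \eqref{est3}). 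Mere boundedness in $L^\infty(0,T;H)\cap L^2(0,T;H^1)$, which is all your interpolation invokes, gives no compactness in any $L^p(0,T;H^a)$.

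Moreover, the mechanism you propose cannot produce \eqref{est3}. In the $H^1$-energy identity the It\^o correction is $\frac{1}{2\mu}\int_0^t\Vert\sigma(u_\mu)\Vert^2_{\mathcal{L}_2(H_Q,H)}\,ds$, and absorbing its growth $\Vert u_\mu\Vert_H^{2\rho}$ into the $L^{\theta+1}$-dissipation by Young's inequality yields $\epsilon\int_0^t\Vert u_\mu\Vert^{\theta+1}_H\,ds+C_\epsilon\,\mu^{-2\beta}$ with $2\beta=\frac{\theta+1}{\theta+1-2\rho}\geq 1$; so the direct estimate gives the rate $\mu^{-2\beta}$, which is never better than the rate $\mu^{-1}$ of \eqref{est2}, and hence only boundedness, never vanishing, of $\mathbb{E}\sup_t\Vert\mu\partial_t u_\mu\Vert^2_H$. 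The missing idea is the paper's proof of \eqref{est3}: a contradiction argument built on the random time at which the energy $L_\mu$ is maximal, showing that the energy stays comparable to its maximum on a preceding interval of length of order $\mu^{2\beta}$, and playing this against the fact that the time integral $\int_0^T L_\mu(t)\,dt$ is uniformly bounded by \eqref{est1}--\eqref{est2}. It is this averaging-in-time step that converts the pointwise rate $\mu^{-2\beta}$ into the rate $\mu^{-\beta}$ for the supremum, and it is only here that the threshold $\rho<(\theta+1)/4$ (equivalently $\beta<1$) is actually used. Without this argument, or a substitute that handles $\mu\partial_t u_\mu$ in sup-in-time norms, your proof of part~(2) does not go through.
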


\bigskip

%%%%%%%%%%%%%%%%%%%%%%%%%%%%%%%%%%%

\subsection{Energy estimates}
One of the key ingredients in our proof of  Theorem \ref{sm}  is the tightness of $\{u_\mu\}_{\mu \in\,(0,1)}$ in suitable functional spaces. This will require the following a-priori bounds.

\begin{lem}\label{Energy}
	Assume Hypotheses \ref{Hypothesis1}, \ref{Hypothesis2} and \ref{Hypothesis3} hold,  with $\theta\in(1,3)$, and fix $T>0$ and $(u_{0},v_{0})\in\mathcal{H}_{1}$.
	\begin{enumerate}
	\item There exist $\mu_{T}\in(0,1)$ and $c_{T}>0$ such that for every $\mu\in(0,\mu_{T})$,
	\begin{equation}\label{est1}
		\mathbb{E}\sup_{t\in[0,T]}\norm{u_{\mu}(t)}_{H}^{2}+\mathbb{E}\int_{0}^{T}\norm{u_{\mu}(t)}_{H^{1}}^{2}dt+\mathbb{E}\int_{0}^{T}\norm{u_{\mu}(t)}_{L^{\theta+1}}^{\theta+1}dt\leq c_{T},
	\end{equation}
	
	and
	\begin{equation}\label{est2}
		\mathbb{E}\sup_{t\in[0,T]}\norm{u_{\mu}(t)}_{H^{1}}^{2}+\mathbb{E}\sup_{t\in[0,T]}\norm{u_{\mu}(t)}_{L^{\theta+1}}^{\theta+1}+\mu\mathbb{E}\sup_{t\in[0,T]}\norm{\partial_{t}u_{\mu}(t)}_{H}^{2}+\mathbb{E}\int_{0}^{T}\norm{\partial_{t}u_{\mu}(t)}_{H}^{2}dt\leq \frac{c_{T}}{\mu}.
	\end{equation}

	\item If, in addition,  condition \eqref{sigma_poly} holds, then for every $\mu\in(0,\mu_{T})$
	\begin{equation}\label{est3}
		\mathbb{E}\sup_{t\in[0,T]}\norm{u_{\mu}(t)}_{H^{1}}^{2}+\mathbb{E}\sup_{t\in[0,T]}\norm{u_{\mu}(t)}_{L^{\theta+1}}^{\theta+1}+\mu\mathbb{E}\sup_{t\in[0,T]}\norm{\partial_{t}u_{\mu}(t)}_{H}^{2}\leq \frac{c_{T}}{\mu^{\beta}},
	\end{equation}
	
	where 
	\begin{equation*}
		\beta=\beta(\rho):=\frac{\theta+1}{2(\theta+1-2\rho)}.
	\end{equation*}
	
	\end{enumerate}
\end{lem}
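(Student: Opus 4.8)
The plan is to prove both statements through two coupled energy estimates, exactly in the spirit of Lemma \ref{lemma5.1} and \cite[Lemma 4.1]{CX}, but adapted to the situation $\varphi=0$ and to a noise of order one (rather than of order $\sqrt\mu$). Throughout one works under Hypothesis \ref{Hypothesis3}, so that the coercive bounds \eqref{sm12} and \eqref{sm145} on $\mathfrak{f}$ and $F$ are available. The first ingredient is the \emph{low--energy} inequality, obtained by testing equation \eqref{SPDE1} against $u_\mu$ and proceeding as in the derivation of \eqref{sm301}; since there is now no control and the noise is not rescaled, after taking $\mathbb{E}\sup_{[0,t]}$ and treating the stochastic term by Burkholder's inequality together with \eqref{sm4} it reads
\[ \mathbb{E}\sup_{[0,t]}\|u_\mu\|_H^2+\mathbb{E}\int_0^t\|u_\mu\|_{H^1}^2\,ds+\mathbb{E}\int_0^t\|u_\mu\|_{L^{\theta+1}}^{\theta+1}\,ds\le c_T\Big(1+\mathbb{E}\int_0^t\|u_\mu\|_H^2\,ds+\mu\,\mathbb{E}\int_0^t\|\partial_t u_\mu\|_H^2\,ds+\mu^2\,\mathbb{E}\sup_{[0,t]}\|\partial_t u_\mu\|_H^2\Big), \]
the $L^{\theta+1}$ term coming from \eqref{sm145}. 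The second ingredient is the \emph{high--energy} inequality obtained by applying It\^o's formula to $\tfrac12\|u_\mu\|_{H^1}^2+\tfrac\mu2\|\partial_t u_\mu\|_H^2-\int_{\mathcal O}\mathfrak{f}(\cdot,u_\mu)$.

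The decisive difference from the body of the paper is the It\^o correction: since the noise enters at order one, the correction for the kinetic energy $\tfrac\mu2\|\partial_t u_\mu\|_H^2$ equals $\tfrac1{2\mu}\|\sigma(u_\mu)\|^2_{\mathcal L_2(H_Q,H)}$, which carries the factor $1/\mu$ that is absent in Lemma \ref{lemma5.1}. Using the friction lower bound in \eqref{nonlinearity assumption}, the coercivity \eqref{sm12}, the linear growth \eqref{sm4} and a further Burkholder estimate, this yields
\[ \mathbb{E}\sup_{[0,t]}\|u_\mu\|_{H^1}^2+\mathbb{E}\sup_{[0,t]}\|u_\mu\|_{L^{\theta+1}}^{\theta+1}+\mu\,\mathbb{E}\sup_{[0,t]}\|\partial_t u_\mu\|_H^2+\mathbb{E}\int_0^t\|\partial_t u_\mu\|_H^2\,ds\le \frac c\mu\Big(1+\mathbb{E}\int_0^t\|u_\mu\|_H^2\,ds\Big). \]
To prove \eqref{est1} I would combine the two inequalities. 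From the high--energy bound, both $\mu^2\,\mathbb{E}\sup_{[0,t]}\|\partial_t u_\mu\|_H^2$ and $\mu\,\mathbb{E}\int_0^t\|\partial_t u_\mu\|_H^2\,ds$ are at most $c\,(1+\mathbb{E}\int_0^t\|u_\mu\|_H^2\,ds)$, so the two powers of $\mu$ appearing in the low--energy inequality exactly compensate the $1/\mu$ growth; substituting and invoking Gronwall's lemma gives $\mathbb{E}\sup_{[0,T]}\|u_\mu\|_H^2\le c_T$ uniformly in $\mu$, whence \eqref{est1} follows. Feeding $\mathbb{E}\int_0^T\|u_\mu\|_H^2\,ds\le c_T$ back into the high--energy inequality gives \eqref{est2}; the threshold $\mu_T$ arises precisely in the absorption step when the two estimates are merged, as in the passage from \eqref{energy1}--\eqref{sm106} to \eqref{sm92}.

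For \eqref{est3} the goal is to improve the exponent from $1$ to $\beta<1$, and this is where the real work lies. The $1/\mu$ in \eqref{est2} comes entirely from the deterministic It\^o source $\tfrac1{2\mu}\int_0^t\|\sigma(u_\mu)\|^2\,ds$; this source is, however, cancelled in mean by the friction dissipation $\gamma_0\int_0^t\|\partial_t u_\mu\|_H^2\,ds$ (the equipartition balance $\mu\,\mathbb{E}\|\partial_t u_\mu(t)\|_H^2=O(1)$), so the genuine size of the kinetic--energy supremum is governed only by the residual fluctuation, which is of order $\mu^{-1/2}$ through a Burkholder/quadratic--variation estimate. Under \eqref{sigma_poly} the source becomes $\tfrac c\mu\int_0^t(1+\|u_\mu\|_H^{2\rho})\,ds$, and I would control $\|u_\mu\|_H^{2\rho}\le c\,\|u_\mu\|_{L^{\theta+1}}^{2\rho}$ by interpolating against the coercive potential energy $\|u_\mu\|_{L^{\theta+1}}^{\theta+1}$ via a Young inequality; the hypothesis $\rho<(\theta+1)/4$, i.e. $2\rho<(\theta+1)/2$, is exactly what keeps the relevant moments finite and lets this balance close at the stated exponent $\beta=(\theta+1)/(2(\theta+1-2\rho))$, which interpolates between $1/2$ for bounded $\sigma$ and $1$ as $\rho\uparrow(\theta+1)/4$. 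The main obstacle, and the delicate point of the whole lemma, is precisely this sharp control of $\mathbb{E}\sup_{[0,T]}\mu\|\partial_t u_\mu\|_H^2$ for order--one noise: the pathwise energy identity alone only gives $1/\mu$, and obtaining $\mu^{-\beta}$ forces one to exploit the cancellation between the $1/\mu$ It\^o correction and the friction, together with a Burkholder estimate for the residual martingale and the superlinear damping supplied by the nonlinearity.
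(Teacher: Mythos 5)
Your treatment of part (1) is essentially the paper's own: the paper proves \eqref{est1} and \eqref{est2} ``by proceeding as in the proof of Lemma \ref{lemma5.1}'', and your low-energy/high-energy pair of inequalities --- with the It\^o correction $\tfrac{1}{2\mu}\Vert\sigma(u_\mu)\Vert^2_{\mathcal{L}_2(H_Q,H)}$ producing the $1/\mu$ on the right-hand side, the two estimates merged by absorption, and Gronwall closing the loop --- is exactly that argument.

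For part (2), however, there is a genuine gap. Your mechanism is a ``cancellation in mean'' between the It\^o source $\tfrac{1}{2\mu}\int_0^t\Vert\sigma(u_\mu)\Vert^2\,ds$ and the friction dissipation $\gamma_0\int_0^t\Vert\partial_t u_\mu\Vert_H^2\,ds$, plus a Burkholder bound for the residual martingale. But this cannot be implemented inside $\mathbb{E}\sup_{t\in[0,T]}$ as stated. If you discard the (favorable) friction term from the energy identity, you are left with the deterministic source, which is of size $T/\mu$ and merely reproduces \eqref{est2}; if instead you want to subtract the friction from the source, you need a \emph{pathwise} lower bound of the form $\gamma_0\int_0^t\Vert\partial_t u_\mu\Vert_H^2\,ds\gtrsim t/\mu-O(\mu^{-\beta})$, valid at the random time where the supremum of the energy is attained --- and neither part (1) nor your Burkholder estimate gives this: equipartition is available only in a time-averaged, in-expectation sense. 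Your two quantitative ingredients are correct (Young's inequality $\tfrac1\mu\Vert u\Vert_H^{2\rho}\le c\,\mu^{-2\beta}+c\,\Vert u\Vert_H^{\theta+1}$, which is where $2\beta=(\theta+1)/(\theta+1-2\rho)$ comes from, and the martingale bound, which the paper proves in the form $\mathbb{E}\sup_t|M_k(t)|\le C_T\,\mu^{-1/(2-\rho)}$ with $1/(2-\rho)\le\beta$ precisely because $\theta\le 3$), but no argument in your proposal connects them to a bound on the supremum that is better than $1/\mu$.

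The paper closes this gap with a different and essential idea: a contradiction argument based on \emph{persistence of high energy}. Set $L_\mu(t)=\Vert u_\mu(t)\Vert_{H^1}^2+\int_0^L\bigl(c_2-\mathfrak{f}(x,u_\mu(t,x))\bigr)dx+\mu\Vert\partial_t u_\mu(t)\Vert_H^2$ and suppose $\mu_k^{\beta}\,\mathbb{E}\sup_t L_{\mu_k}(t)\to\infty$ along some sequence. Let $t_k$ be a random time realizing the supremum. Dropping the friction and applying the Young inequality above, the energy identity shows that $L_{\mu_k}$ can grow at rate at most $\kappa_T\mu_k^{-2\beta}$ per unit time, up to the additive quantities $U_k=\int_0^T\Vert u_{\mu_k}\Vert_{L^{\theta+1}}^{\theta+1}dt$ and $M_k=\sup_t|M_k(t)|$; reading this \emph{backward} from $t_k$, the energy must remain of order $\delta_k:=L_{\mu_k}(t_k)-4M_k-\kappa_T(U_k+1)$ on a time interval of length $\sim\mu_k^{2\beta}\delta_k$ preceding $t_k$, so that $I_k:=\int_0^T L_{\mu_k}(s)\,ds\gtrsim \mu_k^{2\beta}\delta_k^2$. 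Since part (1) gives $\sup_k\mathbb{E}(U_k)<\infty$ and $\sup_k\mathbb{E}(I_k)<\infty$, while $\mu_k^\beta\,\mathbb{E}(M_k)$ stays bounded, the contradiction hypothesis forces $\mu_k^\beta\,\mathbb{E}(\delta_k)\to\infty$ and hence $\mathbb{E}(I_k)\to\infty$, which is impossible. It is this backward-from-the-maximum device --- converting the uniform bound on the \emph{time integral} of the energy into a bound on its \emph{supremum} --- that your proposal lacks, and it is the actual content of \eqref{est3}.
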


\begin{Remark}
	{\em   \begin{enumerate}
	
	\item If the mapping $\sigma$ is bounded, then  condition \eqref{sigma_poly} is satisfied for $\rho=0$, in which case $\beta=1/2$, so that for every $\mu\in(0,\mu_{T})$
	\begin{equation*}
		\sqrt{\mu}\ \mathbb{E}\sup_{t\in[0,T]}\Bigg(\norm{u_{\mu}(t)}_{H^{1}}^{2}+\norm{u_{\mu}(t)}_{L^{\theta+1}}^{\theta+1}+\mu\norm{\partial_{t}u_{\mu}(t)}_{H}^{2}\Bigg)\leq c_{T}.
	\end{equation*}
	
	\item  If $\rho\in[0,(\theta+1)/4)$, then   $\beta<1$. Therefore, if condition (\ref{sigma_poly}) holds,  due to (\ref{est3}) we have 
	\begin{equation}\label{est4}
		\lim_{\mu\to0}\mu^{2}\mathbb{E}\sup_{t\in[0,T]}\norm{\partial_{t}u_{\mu}(t)}_{H}^{2}=0,
	\end{equation}
	which is the same bound proven in \cite{CX}.
	
	\end{enumerate}}
\end{Remark}

\begin{proof} Estimates (\ref{est1}) and (\ref{est2}) can be proved by proceeding as in the proof of Lemma \ref{lemma5.1}. Thus, we will only prove (\ref{est3}) under  condition (\ref{sigma_poly}).

	For every $\mu\in(0,1)$ and $t\in[0,T]$, define 
	\begin{equation*}
		L_{\mu}(t):=\norm{u_{\mu}(t)}_{H^{1}}^{2}+\int_0^L\Big(c_{2}-\mathfrak{f}(x,u_{\mu}(t,x))\Big)dx+\mu\norm{\partial_{t}u_{\mu}(t)}_{H}^{2},
	\end{equation*}
	where the function $\mathfrak{f}$ and the constant $c_2$ have been introduced in Hypothesis \ref{Hypothesis3}.
	Due to  \eqref{sm12}, we have that
	\[L_{\mu}(t)\geq \Vert u_\mu(t)\Vert^2_{H^1}+\Vert u_\mu(t)\Vert_{L^{\theta+1}}^{\theta+1}+\mu\,\Vert \partial_t u_\mu(t)\Vert^2_{H},\ \ \ \ \  \mathbb{P}-\text{a.s.}\] 
	Thus, we obtain 	\eqref{est3} once we have proved that
	\begin{equation}\label{claim}
		\mu^{\beta}\mathbb{E}\sup_{t\in[0,T]}L_{\mu}(t)\leq c_{T}.
	\end{equation}
	
	Assume (\ref{claim}) is not true. Then there is a sequence $(\mu_{k})_{k\in\mathbb{N}}\subset(0,1)$ converging to $0$, as $k\to\infty$, such that
	\begin{equation}\label{contra}
		\lim_{k\to\infty}\mu_{k}^{\beta}\,\mathbb{E}\sup_{t\in[0,T]}L_{\mu_{k}}(t)=+\infty.
	\end{equation}
	For every $k\in\mathbb{N}$, the mapping $t\mapsto L_{\mu_k}(t)$ is  continuous $\mathbb{P}$-a.s., so that  there exists a random time $t_{k}\in[0,T]$ such that
	\begin{equation*}
		L_{\mu_{k}}(t_{k})=\sup_{t\in[0,T]}L_{\mu_{k}}(t).
	\end{equation*}
	As a consequence of the It\^o formula,  if $s$ is any random time  such that $\mathbb{P}(s\leq t_{k})=1$,  we have 
	\begin{equation*}
		L_{\mu_{k}}(t_{k})-L_{\mu_{k}}(s)\leq \frac{1}{\mu_k}\int_{s}^{t_{k}}\norm{\sigma(u_{\mu_{k}}(r))}_{\mathcal{L}_{2}(H_{Q},H)}^{2}dr+2\big(M_{k}(t_{k})-M_{k}(s)\big),
	\end{equation*}
	where 
	\begin{equation*}
		M_{k}(t):=\int_{0}^{t}\Inner{\partial_{t}u_{\mu_{k}}(r),\sigma(u_{\mu_{k}}(r))dw^{Q}(r)}_{H}.
	\end{equation*}
Thanks to Young's inequality, since $2\beta>1$ and $\mu_k<1$,  we have
	\begin{equation*}
		\begin{array}{l}
			\ds{\frac{1}{\mu_k}\int_{s}^{t_{k}}\norm{\sigma(u_{\mu_{k}}(r))}_{\mathcal{L}_{2}(H_{Q},H)}^{2}dr
			\leq \frac{c}{\mu_k}\int_{s}^{t_{k}}\Big(1+\norm{u_{\mu_{k}}(r)}_{H}^{2\rho}\Big)dr}\\
			\vs
			\ds{\leq c\Bigg(\frac{t_{k}-s}{\mu_k}+\frac{t_{k}-s}{\mu_{k}^{\frac{\theta+1}{\theta+1-2\rho}}}+\int_{0}^{T}\norm{u_{\mu_{k}}(t)}_{H}^{\theta+1}dt\Bigg)\leq c\left(\frac{t_{k}-s}{\mu^{2\beta}}+\int_{0}^{T}\norm{u_{\mu_{k}}(t)}_{H}^{\theta+1}dt\right).}	
		\end{array}
	\end{equation*}	
Therefore, 	
if we define 
	\begin{equation*}
		U_{k}:=\int_{0}^{T}\norm{u_{\mu_{k}}(t)}_{L^{\theta+1}}^{\theta+1}dt,\ \ \ \text{and}\ \ \ M_{k}:=\sup_{t\in[0,T]}\abs{M_{k}(t)},
	\end{equation*}
we can fix a constant $\kappa_{T}$ independent of $k$, with $L_{\mu_k}(0)\leq \kappa_{T}$, 
	 such that
	\begin{equation*}
		L_{\mu_{k}}(t_{k})-L_{\mu_{k}}(s)\leq \kappa_{T}\Big(\frac{t_{k}-s}{\mu_{k}^{2\beta}}+U_{k}\Big)+4M_{k}.
	\end{equation*}
	In particular, if we take $s=0$, 
	we have 
	\begin{equation*}
		t_{k}\geq \frac{\mu_{k}^{2\beta}}{\kappa_{T}}\Big(L_{\mu_{k}}(t_{k})-4M_{k}-\kappa_{T}(U_{k}+1)\Big)=:\frac{\mu_{k}^{2\beta}}{\kappa_{T}}\delta_{k}.
	\end{equation*}
	On the set $E_{k}:=\big\{\delta_{k}>0\big\}$, for any $s\in\big[t_{k}-\frac{\mu_{k}^{2\beta}}{2\kappa_{T}}\delta_{k},t_{k}\big]$ we have 
	\begin{equation*}
		L_{\mu_{k}}(s)\geq L_{\mu_{k}}(t_{k})-\frac{1}{2}\delta_{k}-\kappa_{T}U_{k}-4M_{k}=\frac{1}{2}\Big[L_{\mu_{k}}(t_{k})-4M_{k}-\kappa_{T}(U_{k}-1)\Big].
	\end{equation*}
	Hence, if we define 
	\begin{equation*}
		I_{k}:=\int_{0}^{T}L_{\mu_{k}}(s)ds,
	\end{equation*}
	we have
	\begin{equation*}
		I_{k}\geq \int_{t_{k}-\frac{\mu_{k}^{2\beta}}{2\kappa_{T}}\delta_{k}}^{t_{k}}L_{\mu_{k}}(s)ds\geq \frac{\mu_{k}^{2\beta}}{4\kappa_{T}}\Big[\Big(L_{\mu_{k}}(t_{k})-4M_{k}-\kappa_{T}U_{k}\Big)^{2}-\kappa_{T}^{2}\Big],
	\end{equation*}	
so that
	\begin{equation}\label{contra1}
		\mathbb{E}(I_{k})\geq \mathbb{E}(I_{k};E_{k})\geq \mathbb{E}\Bigg(\frac{\mu_{k}^{2\beta}}{4C_{T}}\Big(L_{\mu_{k}}(t_{k})-4M_{k}-C_{T}U_{k}\Big)^{2};E_{k}\Bigg)-\frac{\mu_{k}^{2\beta}}{4}C_{T}.
	\end{equation}
	
Now, by (\ref{est1}) and (\ref{est2}), we have $\sup_{k}\mathbb{E}(U_{k})<\infty$. Moreover, thanks to condition \eqref{sigma_poly}, and estimates  (\ref{est1}) and (\ref{est2}) we have
	\begin{equation*}
		\begin{array}{ll}
			\ds{\mathbb{E}(M_{k})} & \ds{\leq \mathbb{E}\Bigg(\int_{0}^{T}\norm{\partial_{t}u_{\mu_{k}}(t)}_{H}^{2}\norm{\sigma(u_{\mu_{k}}(t))}_{\mathcal{L}_{2}(H_{Q},H)}^{2}dt\Bigg)^{\frac{1}{2}}}\\
			\vs
			&\ds{\leq \mathbb{E}\,\sup_{t\in[0,T]}\norm{\sigma(u_{\mu_{k}}(t))}_{\mathcal{L}_{2}(H_{Q},H)}\left(\int_{0}^{T}\norm{\partial_{t}u_{\mu_{k}}(t)}_{H}^{2}dt\right)^{\frac{1}{2}}}\\
			\vs
			&\ds{\leq c\,\mathbb{E}\sup_{t\in[0,T]}\norm{\sigma(u_{\mu_{k}}(t))}_{\mathcal{L}_{2}(H_{Q},H)}^{\frac{2}{\rho}}+c\,\mathbb{E}\Big(\int_{0}^{T}\norm{\partial_{t}u_{\mu_{k}}(t)}_{H}^{2}dt\Big)^{\frac{1}{2-\rho}}}\\
			\vs
			&\ds{\leq c\,\Big(1+\mathbb{E}\sup_{t\in[0,T]}\norm{u_{\mu_{k}}(t)}_{H}^{2}\Big)+c\Big(\int_{0}^{T}\mathbb{E}\norm{\partial_{t}u_{\mu_{k}}(t)}_{H}^{2}dt\Big)^{\frac{1}{2-\rho}}\leq \frac{C_{T}}{\mu^{\frac{1}{2-\rho}}}.}
		\end{array}
	\end{equation*}
In particular, since
	\begin{equation*}
		\beta=\frac{\theta+1}{2(\theta+1-2\rho)}\geq \frac{1}{2-\rho},
	\end{equation*}
we have that
	\begin{equation*}
		\lim_{k\to\infty}\mu_{k}^{\beta}\mathbb{E}(M_{k})<+\infty,
	\end{equation*}
and thanks to (\ref{contra}) this gives	\begin{equation}
	\label{contra2}	\lim_{k\to\infty}\mu_{k}^{\beta}\mathbb{E}(\delta_{k})=+\infty.
	\end{equation}

Now, we have
	\[
		\mu_{k}^{\beta}\mathbb{E}(\delta_{k})=\mu_{k}^{\beta}\mathbb{E}(\delta_{k};E_{k})\leq \mathbb{E}\big(\mu_{k}^{\beta}(\delta_{k}+\kappa_{T});E_{k}\big)\leq \Big[\mathbb{E}\big(\mu_{k}^{2\beta}(\delta_{k}+\kappa_{T})^{2};E_{k}\big)\Big]^{\frac{1}{2}},
	\]
so that, thanks to (\ref{contra1}) we have
	\begin{equation*}
		\mathbb{E}(I_{k})\geq \frac{1}{4\kappa_{T}}\mathbb{E}\big(\mu_{k}^{2\beta}(\delta_{k}+\kappa_{T})^{2};E_{k}\big)-\frac{\mu_{k}^{2\beta}\kappa_{T}}{4}\geq \frac{1}{4\kappa_{T}}\big[\mu_{k}^{\beta}\mathbb{E}(\delta_{k})\big]^{2}-\frac{\mu_{k}^{2\beta}\kappa_{T}}{4}.
	\end{equation*}
Due to \eqref{contra2}, this implies
\[\lim\limits_{k\to\infty}\mathbb{E}(I_{k})=+\infty.\]	

However, the limit above is not possible. Actually, since we 	\begin{equation*}
		L_{\mu_{k}}(t)\leq \norm{u_{\mu_{k}}(t)}_{H^{1}}^{2}+\mu_{k}\norm{\partial_{t}u_{\mu_{k}}(t)}_{H}^{2}+L\,c\Big(1+\norm{u_{\mu_{k}}(t)}_{L^{\theta+1}}^{\theta+1}\Big),\ \ \mathbb{P}\text{-a.s.}
	\end{equation*}
	as a consequence of (\ref{est1}) and (\ref{est2}) we have
	\[ \sup_{k \in\,\mathbb{N}}\mathbb{E}(I_{k})<+\infty,\] and this gives  a contradiction. In particular, this means that claim (\ref{claim}) is true, and  (\ref{est3}) holds.
	
\end{proof}

%%%%%%%%%%%%%%%%%%%%%%%%%%%%%%%%%%%%

\subsection{Tightness}
As in Section \ref{sec5}, for every $T>0$ and $\mu>0$ we have defined
\begin{equation*}
	\rho_{\mu}(t,x)=g(u_{\mu}(t,x)),\ \ \ (t,x)\in[0,T]\times [0,L],
\end{equation*}
and, by integrating  equation (\ref{SPDE1}) with respect to $t$, we got
\begin{equation}\label{sto5}
	\begin{aligned}
		\rho_{\mu}(t)+\mu\partial_{t}u_{\mu}(t)
		& = g(u_{0})+\mu v_{0}+\int_{0}^{t}\text{div}\big[b(\rho_{\mu}(s))\nabla\rho_{\mu}(s)\big]ds+\int_{0}^{t}F_{g}(\rho_{\mu}(s))ds\\
		&\ \ \ \ +\int_{0}^{t}\sigma_{g}(\rho_{\mu}(s))dw^{Q}(s),
	\end{aligned}
\end{equation}
where we recall that 
$b$, $f_{g}$, $F_g$ and $\sigma_{g}$ were defined in \eqref{end1} and \eqref{end2}.

\begin{Definition}
	Let $E$ be a Banach space with norm $\norm{\cdot}_{E}$. Given $r>1$ and $\lambda\in(0,1)$, we denote by $W^{\lambda,r}(0,T;E)$ be the Banach space of all $u\in L^{p}(0,T;E)$ such that
	\begin{equation*}
		[u]_{W^{\lambda, r}(0,T;E)}:=\int_{0}^{T}\int_{0}^{T}\frac{\norm{u(t)-u(s)}_{E}^{r}}{\abs{t-s}^{1+\lambda r}}dtds<\infty,
	\end{equation*}
	endowed with the norm
	\begin{equation*}
		\norm{u}_{W^{\lambda,r}(0,T;E)}^{r}=\int_{0}^{T}\norm{u(t)}_{E}^{r}dt+[u]_{W^{\lambda, r}(0,T;E)}.
	\end{equation*}
\end{Definition}

It is possible to prove that if $\lambda r<1$, $p\leq r/(1-\lambda r)$ and $1\leq r\leq p$, then $W^{\lambda, r}(0,T;E)\subset L^p(0,T;E)$ and there exists some $c>0$ such that for all $u \in\,W^{\lambda, r}(0,T;E)$
\begin{equation}  \label{end7}
\Vert \tau_h(u)-u\Vert_{L^{p}(0,T-h;E)}\leq c\,h^\lambda T^{1/p-1/r}\,[u]_{W^{\la, r}(0,T;E)},\ \ \ \ h>0,	
\end{equation}
where
\[\tau_h(u)(t)=u(t+h),\ \ \ \ t \in\,[-h,T-h],\]
(see \cite[Lemma5]{simon1986}).

\begin{prop}\label{sm_tightness}
	Assume Hypotheses \ref{Hypothesis1}, \ref{Hypothesis2} and \ref{Hypothesis3} hold, with $\theta\in(1,3)$. Fix any $T>0$ and $(u_{0},v_{0})\in\mathcal{H}_{1}$ and let $(\mu_{k})_{k\in\mathbb{N}}\subset(0,1)$ be an arbitrary sequence converging to $0$.
	\begin{enumerate}
	\item The family of probability measures $\big(\mathcal{L}(\rho_{\mu_{k}})\big)_{k\in\mathbb{N}}$ is tight in $X_{1}(a)\cap X_{2}(\delta)$, for every $a\in[0,1)$ and $\delta>0$.
	
	\item If   condition (\ref{sigma_poly}) holds, then the family of probability measures  $\big(\mathcal{L}(\rho_{\mu_{k}})\big)_{k\in\mathbb{N}}$ is tight in $X_{3}(a)$, for every $a\in[0,1)$.
	
	\end{enumerate}
\end{prop}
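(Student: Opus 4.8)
The plan is to apply a stochastic compactness criterion of Aubin--Lions--Simon type to the family $\{\rho_{\mu_{k}}\}$, exploiting the decomposition carried by \eqref{sto5}. Setting
\[
y_{\mu}(t):=\rho_{\mu}(t)+\mu\,\partial_{t}u_{\mu}(t)=g(u_{0})+\mu v_{0}+\int_{0}^{t}\big[\mathrm{div}(b(\rho_{\mu})\nabla\rho_{\mu})+F_{g}(\rho_{\mu})\big]ds+\int_{0}^{t}\sigma_{g}(\rho_{\mu})\,dw^{Q},
\]
I would view $\rho_{\mu}=y_{\mu}-\mu\,\partial_{t}u_{\mu}$ as the sum of a part $y_{\mu}$ carrying \emph{time} regularity (it is an integral in time) and a remainder $\mu\,\partial_{t}u_{\mu}$ that carries no time regularity but is \emph{small}. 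From Lemma \ref{Energy}, estimate \eqref{est1} provides bounds, uniform in $\mu$ and in probability, for $\rho_{\mu}$ in $L^{\infty}(0,T;H)\cap L^{2}(0,T;H^{1})\cap L^{\theta+1}(0,T;L^{\theta+1})$ (recall $\norm{\rho_{\mu}}_{H}\leq\gamma_{1}\norm{u_{\mu}}_{H}$ etc. by \eqref{rho}), while \eqref{est2} yields $\E\sup_{t}\norm{\mu\partial_{t}u_{\mu}(t)}_{H}^{2}\leq c_{T}$ and $\E\int_{0}^{T}\norm{\mu\partial_{t}u_{\mu}(t)}_{H}^{2}dt\leq\mu\,c_{T}\to0$, so that $\mu\,\partial_{t}u_{\mu}$ is bounded in $L^{\infty}(0,T;H)$ and tends to $0$ in $L^{2}(0,T;H)$, hence in $L^{q}(0,T;H)$ for every $q<\infty$. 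For part (2) I would instead invoke \eqref{est3}--\eqref{est4}, available under \eqref{sigma_poly}, which strengthen this to $\mu\,\partial_{t}u_{\mu}\to0$ in $L^{\infty}(0,T;H)$ and thereby accommodate the larger time-integrability range of $X_{3}(a)$.

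Next I would extract the spatial compactness by interpolation. Using $\norm{\rho}_{H^{a}}\leq\norm{\rho}_{H^{1}}^{a}\norm{\rho}_{H}^{1-a}$ together with $\norm{\rho}_{H}\leq c\norm{\rho}_{L^{\theta+1}}$ and H\"older in time against \eqref{est1}, one obtains $\rho_{\mu}$ bounded in $L^{q(a)}(0,T;H^{a})$, which is exactly the endpoint exponent $q(a)=2(\theta+1)/(2+(\theta-1)a)$ of $X_{1}(a)$; bounding instead $\norm{\rho}_{H}$ by $\norm{\rho}_{L^{\infty}(0,T;H)}$ gives $\rho_{\mu}$ bounded in $L^{2/a}(0,T;H^{a})$, the endpoint of $X_{3}(a)$; and for $X_{2}(\delta)$ the inclusion $L^{\infty}(0,T;H)\subset L^{\theta+1}(0,T;H)$ together with the compact embedding $H\hookrightarrow\hookrightarrow H^{-\delta}$ suffices. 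In each case, for a target exponent strictly below the endpoint I can choose $a'>a$ (resp. a slightly stronger spatial space) for which the corresponding bound still holds and $H^{a'}\hookrightarrow\hookrightarrow H^{a}$, which delivers the compact spatial ingredient, uniformly in $\mu$ and in probability.

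For the time regularity I would show that $y_{\mu}$ is bounded, in probability, in $W^{\lambda,2}(0,T;H^{-1})$ for every $\lambda<\tfrac12$: the two drift integrals are H\"older-$\tfrac12$ continuous into $H^{-1}$ since $b(\rho_{\mu})\nabla\rho_{\mu}$ is bounded in $L^{2}(0,T;H)$ and $F_{g}(\rho_{\mu})$ in a space embedding into $H^{-1}$, while the stochastic integral is controlled by the Burkholder inequality, $\E\norm{\int_{s}^{t}\sigma_{g}(\rho_{\mu})\,dw^{Q}}_{H}^{2}\leq c\,|t-s|\,(1+\E\sup_{r}\norm{\rho_{\mu}(r)}_{H}^{2})\leq c_{T}|t-s|$, again by \eqref{est1}. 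Applying \eqref{end7} then gives, for the relevant $q$, $\norm{\tau_{h}y_{\mu}-y_{\mu}}_{L^{q}(0,T-h;H^{-1})}\leq c\,h^{\lambda}\,[y_{\mu}]_{W^{\lambda,2}(0,T;H^{-1})}$, a quantity that is small with $h$, uniformly in $\mu$, on sets of large probability.

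Finally I would assemble the tightness through the weak space $H^{-1}$. Since $\rho_{\mu}-y_{\mu}=-\mu\,\partial_{t}u_{\mu}$,
\[
\norm{\tau_{h}\rho_{\mu}-\rho_{\mu}}_{L^{q}(0,T-h;H^{-1})}\leq c\,h^{\lambda}[y_{\mu}]_{W^{\lambda,2}(0,T;H^{-1})}+2c\,\norm{\mu\,\partial_{t}u_{\mu}}_{L^{q}(0,T;H)},
\]
where the first term vanishes as $h\to0$ uniformly in $\mu$ (bounded seminorm) and the second vanishes as $\mu\to0$ uniformly in $h$. The \textbf{main obstacle} is precisely this second term: $\mu\,\partial_{t}u_{\mu}$ has no modulus of continuity in time, so it cannot be folded into a single $h$-modulus, and this is why the target spaces are $L^{q}$-in-time rather than $C$- or $L^{\infty}$-in-time. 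I would resolve it by combining two facts: for each fixed $\mu_{k}$ one has $\tau_{h}(\mu_{k}\partial_{t}u_{\mu_{k}})\to\mu_{k}\partial_{t}u_{\mu_{k}}$ in $L^{q}(0,T;H)$ as $h\to0$ (continuity of translations in $L^{q}$, $q<\infty$), while for $k$ large that term is uniformly small by the smallness above. A finite-head argument over the sequence then produces, for each $\e>0$, a modulus $\psi_{\e}(h)\to0$ and a radius $R_{\e}$ with $\sup_{k}\Pro\big(\norm{\rho_{\mu_{k}}}_{L^{q_{1}}(0,T;H^{a_{1}})}\leq R_{\e},\ \norm{\tau_{h}\rho_{\mu_{k}}-\rho_{\mu_{k}}}_{L^{q}(0,T-h;H^{-1})}\leq\psi_{\e}(h)\ \forall h\big)\geq1-\e$. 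By Simon's compactness lemma the associated set is compact in $L^{q}(0,T;H^{a})$ (resp. in $L^{q}(0,T;H^{-\delta})$), which yields tightness in each $X_{1}(a)$ and $X_{2}(\delta)$, and, using the stronger control \eqref{est4} afforded by \eqref{sigma_poly}, in $X_{3}(a)$.
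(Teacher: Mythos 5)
Your strategy coincides with the paper's own proof: the same decomposition $\rho_\mu=y_\mu-\mu\partial_t u_\mu$ via \eqref{sto5}, the same energy estimates from Lemma \ref{Energy}, Simon-type compactness, and a smallness-plus-Prokhorov (finite-head) treatment of the term $\mu_k\partial_t u_{\mu_k}$. There is, however, one genuine gap, and it is load-bearing: the claim that $y_\mu$ is bounded (in probability) in $W^{\lambda,2}(0,T;H^{-1})$ for every $\lambda<\tfrac12$ because both drift integrals are H\"older-$\tfrac12$ into $H^{-1}$. For the term $\int_0^t F_g(\rho_\mu)\,ds$, the stated reason -- that $F_g(\rho_\mu)$ is bounded ``in a space embedding into $H^{-1}$'' -- is insufficient: H\"older-$\tfrac12$ requires square integrability \emph{in time} of $\norm{F_g(\rho_\mu(t))}_{H^{-1}}$, whereas what \eqref{est1} yields directly, via $\norm{F_g(\rho_\mu(t))}_{H^{-1}}\le c\big(1+\norm{u_\mu(t)}_{L^{\theta+1}}^{\theta}\big)$, is only a uniform bound in $L^{(\theta+1)/\theta}(0,T;H^{-1})$, i.e.\ H\"older exponent $1/(\theta+1)<\tfrac12$; this is exactly why the paper works with $W^{\lambda,\theta_0}(0,T;H^{-1})$, $\theta_0=(\theta+1)/\theta$, $\lambda<1/(\theta+1)$, in \eqref{end4}. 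The loss is not cosmetic in your assembly, because you feed the translation estimate into Simon's criterion at the \emph{target} exponent $q$: by \eqref{end7} with $r=\theta_0$ and $\lambda<1/(\theta+1)$ one can only control translations in $L^{q}(0,T-h;H^{-1})$ for $q<(\theta+1)/(\theta-1)$, and when $\theta\in(2,3)$ this is strictly smaller than the exponents $q<q(a)\le\theta+1$ you must reach; so the step, as justified, fails on part of the admissible range of $\theta$.

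The gap can be repaired in two ways. (i) Within your scheme: prove the missing bound, namely that $\norm{F_g(\rho_\mu)}_{L^2(0,T;H^{-1})}$ is bounded in probability uniformly in $\mu$. Indeed, by \eqref{property3} (valid since $d=1$ and $\theta<3$), $\norm{F_g(\rho)}_{H^{-1}}\le C\big(1+\norm{\rho}_{H^1}\norm{\rho}_H^{\theta-1}\big)$, so that $\int_0^T\norm{F_g(\rho_\mu(t))}^2_{H^{-1}}dt\le C\big(1+\sup_{t\in[0,T]}\norm{\rho_\mu(t)}_H^{2(\theta-1)}\int_0^T\norm{\rho_\mu(t)}^2_{H^1}dt\big)$, a product of factors each bounded in probability by \eqref{est1} and Chebyshev; since tightness only requires in-probability bounds, your $W^{\lambda,2}$ claim -- and with it the whole argument -- then goes through. (ii) Alternatively, keep the weaker seminorm $W^{\lambda,\theta_0}$ and apply Simon's theorem in the form the paper uses (\cite[Theorem 6]{simon1986}): translation control is needed only at a low integrability exponent, while the large target exponents come from boundedness in $L^{q_1}(0,T;X)$ with $X$ compactly embedded in the target space, which your interpolated endpoint bounds provide. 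With either fix the remainder of your proposal is correct and essentially reproduces the paper's proof; I would also point out that your interpolation remark, $\E\norm{\mu\partial_t u_\mu}_{L^q(0,T;H)}^2\le\big(\E\norm{\mu\partial_t u_\mu}^2_{L^\infty(0,T;H)}\big)^{1-2/q}\big(\E\norm{\mu\partial_t u_\mu}^2_{L^2(0,T;H)}\big)^{2/q}\le c_T^{1-2/q}(c_T\,\mu)^{2/q}\to0$ for every $q<\infty$, is correct and uses only \eqref{est1}--\eqref{est2}; pushed consistently, it would even allow part (2) to bypass the $L^\infty$-smallness \eqref{est4}, hence the hypothesis \eqref{sigma_poly}, which the paper's route requires.
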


\medskip

\begin{Remark}
	{\em \begin{enumerate}
	
	\item By taking $a=0$, we have 
	\[\left(\mathcal{L}(\rho_{\mu_{k}})\right)_{k\in\mathbb{N}}\ \text{ is tight in} \ \bigcap_{q<\theta+1}L^{q}(0,T;H).\]
	and thanks to the embedding $H^{a}\hookrightarrow C([0,L])$, for $a\in(1/2,1)$, we have
	\begin{equation*}
		\big(\mathcal{L}(\rho_{\mu_{k}})\big)_{k\in\mathbb{N}}\ \ \text{is tight in}\ \bigcap_{q<4(\theta+1)/(\theta+3)}L^{q}(0,T;C([0,L])).
	\end{equation*}
	
	\item When condition (\ref{sigma_poly}) holds, we have that
	\begin{equation*} 
		\big(\mathcal{L}(\rho_{\mu_{k}})\big)_{k\in\mathbb{N}}\ \ \text{is tight in}\ \bigcap_{p<\infty}L^{p}(0,T;H).
	\end{equation*}

	\end{enumerate}}
\end{Remark}

\begin{proof} For every $0\leq t_{1}\leq t_{2}\leq T$, we have
	\begin{equation*}
		\begin{aligned}
			\mathbb{E}\norm{\int_{t_{1}}^{t_{2}}\text{div}\big[b(\rho_{\mu}(s))\nabla\rho_{\mu}(s)\big]ds}_{H^{-1}}^{(\theta+1)/\theta}
			&\leq \Bigg(\mathbb{E}\norm{\int_{t_{1}}^{t_{2}}\text{div}\big[b(\rho_{\mu}(s))\nabla\rho_{\mu}(s)\big]ds}_{H^{-1}}^{2}\Bigg)^{(\theta+1)/2\theta}\\
			&\leq C(t_{2}-t_{1})^{(\theta+1)/2\theta}\Bigg(\mathbb{E}\int_{0}^{T}\norm{\rho_{\mu}(s)}_{H^{1}}^{2}ds\Bigg)^{(\theta+1)/2\theta},
		\end{aligned}
	\end{equation*}

	\begin{equation*}
		\begin{aligned}
			\mathbb{E}\norm{\int_{t_{1}}^{t_{2}}F_{g}(\rho_{\mu_{k}}(s))ds}_{H^{-1}}^{(\theta+1)/\theta}
			&\leq C\mathbb{E}\Bigg(\int_{t_{1}}^{t_{2}}\Big(1+\norm{u_{\mu}(s)}_{\theta+1}^{\theta}\Big)ds\Bigg)^{(\theta+1)/\theta}\\
			&\leq C(t_{2}-t_{1})^{1/\theta}\mathbb{E}\int_{0}^{T}\Big(1+\norm{u_{\mu}(s)}_{\theta+1}^{\theta+1}\Big)ds,
		\end{aligned}
	\end{equation*}
	and
	\begin{equation*}
		\begin{aligned}
			\mathbb{E}\norm{\int_{t_{1}}^{t_{2}}\sigma_{g}(\rho_{\mu}(s))dw^{Q}(s)}_{H^{-1}}^{(\theta+1)/\theta}
			&\leq \Bigg(\mathbb{E}\norm{\int_{t_{1}}^{t_{2}}\sigma_{g}(\rho_{\mu}(s))dw^{Q}(s)}_{H^{-1}}^{2}\Bigg)^{(\theta+1)/2\theta}\\
			&\leq C(t_{2}-t_{1})^{(\theta+1)/2\theta}\Bigg(1+\mathbb{E}\sup_{t\in[0,T]}\norm{u_{\mu}(t)}_{H}^{2}\Bigg)^{(\theta+1)/2\theta}.
		\end{aligned}
	\end{equation*}
	In view of \eqref{est1} and \eqref{sto5}, it is not difficult to show  that for every $\lambda\in(0,1/(\theta+1))$, 
	\begin{equation}\label{end4}
		\sup_{\mu\in(0,\mu_{T})}\mathbb{E}[\rho_{\mu}+\mu\partial_{t}u_{\mu}]_{W^{\lambda,\theta_{0}}(0,T;H^{-1})}^{\theta_{0}}<\infty,
	\end{equation}
	where 
	\begin{equation*}
		\theta_{0}:=\frac{\theta+1}{\theta}\in(1,2).
	\end{equation*}
	Moreover, by \eqref{est1} and \eqref{est2} we have 
	\begin{equation}\label{end5}
		\sup_{\mu\in(0,\mu_{T})}\mathbb{E}\norm{\rho_{\mu}+\mu\partial_{t}u_{\mu}}_{L^{\infty}([0,T];H)}^{2}<\infty.
	\end{equation}
	Therefore, from \eqref{end4} and \eqref{end5} we conclude that  for every $\epsilon>0$, there exists $L_{1}(\epsilon)>0$ such that, if we define
	\begin{equation*}
		K_{1}^{\epsilon}=\left\{f:[0,T]\times\mathbb{R}\to\mathbb{R}:[f]_{W^{\lambda, \theta_0}(0,T;H^{-1})}+\Vert f\Vert_{L^\infty(0,T;H)}\leq L_{1}(\epsilon)\right\},
	\end{equation*}
	then 
	\begin{equation*}
		\inf_{\mu\in(0,\mu_{T})}\mathbb{P}\big(\rho_{\mu}+\mu\partial_{t}u_{\mu}\in K_{1}^{\epsilon}\big)>1-\epsilon/4.
	\end{equation*}
	According to \eqref{end7}, we have that for every $p<(\theta+1)/\theta$
	\[\lim_{h\to 0}\Vert\tau_hf-f\Vert_{L^p(0,T-h;H^{-1})}=0,\ \ \ \ f \in\,K^\epsilon_1.\]
	Hence, in view of \cite[Theorem 6]{simon1986}, we have that $K^\e_1$ is relatively compact in $L^q(0,T,H^{-\delta})$, for every $q<\infty$ and $\delta>0$.
	
	Next, due to \eqref{est2}, we have
	\begin{equation*}
		\lim_{\mu\to0}\mathbb{E}\norm{\mu\partial_{t}u_{\mu}}_{L^{2}([0,T];H)}^{2}=0,
	\end{equation*}
	hence for every sequence $(\mu_{k})_{k\in\mathbb{N}}\subset (0,\mu_{T})$ converging to zero, there exists a compact $K_{2}^{\epsilon}$ in $L^{2}([0,T];H)$ such that
	\begin{equation*}
		\mathbb{P}\big(\mu_{k}\partial_{t}u_{\mu_{k}}\in K_{2}^{\epsilon}\big)>1-\epsilon/4,\ \ \ k\in\mathbb{N}.
	\end{equation*}
	Since $L^{2}([0,T];H)\subset L^{\theta_{0}}([0,T];H^{-\delta})$, for $\delta>0$, we have that $K_{2}^{\epsilon}$ is also compact in $L^{\theta_{0}}([0,T];H^{-\delta})$, which implies that $K_{1}^{\epsilon}+K_{2}^{\epsilon}$ is relatively compact in $L^{\theta_{0}}([0,T];H^{-\delta})$, and for every $k\in\mathbb{N}$,
	\begin{equation*}
		\mathbb{P}\big(\rho_{\mu_{k}}\in K_{1}^{\epsilon}+K_{2}^{\epsilon}\big)\geq 1-\epsilon/2.
	\end{equation*}
	Moreover, thanks to  estimate \eqref{est1} 	there exists $L_{2}(\epsilon)>0$ such that, if we define 
	\begin{equation*}
		K_{3}^{\epsilon}=\Big\{f:[0,T]\times\mathbb{R}\to\mathbb{R}:\norm{f}_{L^{\theta+1}([0,T];H)}\leq L_{2}(\epsilon)\Big\},
	\end{equation*}
	then 
	\begin{equation*}
		\inf_{\mu\in(0,\mu_{T})}\mathbb{P}\big(\rho_{\mu}\in K_{3}^{\epsilon}\big)\geq 1-\epsilon/4,
	\end{equation*}
	and thus
	\begin{equation*}
		\inf_{k\in\mathbb{N}}\mathbb{P}\Big(\rho_{\mu_{k}}\in \big(K_{1}^{\epsilon}+K_{2}^{\epsilon}\big)\cap K_{3}^{\epsilon}\Big)\geq 1-3\epsilon/4.
	\end{equation*}
	By using again \cite[Theorem 6]{simon1986}, $\big(K_{1}^{\epsilon}+K_{2}^{\epsilon}\big)\cap K_{3}^{\epsilon}$ is relatively compact in $L^{p}([0,T];H^{-\delta})$ for every $\delta>0$ and $p<\theta+1$. This implies that the family of probability measures $\big(\mathcal{L}(\rho_{\mu_{k}})\big)_{k\in\mathbb{N}}$ is tight in $L^{p}([0,T];H^{-\delta})$, for every $\delta>0$ and $p<\theta+1$.
	
	Now, according to  \cite[Theorem 1]{simon1986},	we have
	\begin{equation*}
		\lim_{h\to0}\sup_{f\in \big(K_{1}^{\epsilon}+K_{2}^{\epsilon}\big)\cap K_{3}^{\epsilon}}\norm{\tau_{h}f-f}_{L^{p}([0,T];H^{-\delta})}=0.
	\end{equation*}
	Furthermore, since 
	\begin{equation*}
		\sup_{\mu\in[0,\mu_{T}]}\mathbb{E}\norm{\rho_{\mu}}_{L^{2}([0,T];H^{1})}^{2}<\infty,
	\end{equation*}
	there exists $L_{3}(\epsilon)>0$ such that, if we define 
	\begin{equation*}
		K_{4}^{\epsilon}=\Big\{f:[0,T]\times\mathbb{R}\to\mathbb{R}\ :\ \norm{f}_{L^{2}([0,T];H^{1})}\leq L_{3}(\epsilon)\Big\},
	\end{equation*}
	then 
	\begin{equation*}
		\inf_{\mu\in(0,\mu_{T})}\mathbb{P}\big(\rho_{\mu}\in K_{4}^{\epsilon}\big)\geq 1-\epsilon/4.
	\end{equation*}
	Thus, if we define 
	\begin{equation*}
		K^{\epsilon}=\big(K_{1}^{\epsilon}+K_{2}^{\epsilon}\big)\cap K_{3}^{\epsilon}\cap K_{4}^{\epsilon},
	\end{equation*}
	we have
	\begin{equation*}
		\inf_{k\in\mathbb{N}}\mathbb{P}(\rho_{\mu_{k}}\in K^{\epsilon})\geq 1-\epsilon.
	\end{equation*}
	
	For every $a\in[0,1)$ and for any $\delta>0$, by interpolation  
	\begin{equation*}
		\norm{u}_{H^{a}}\leq C(a,\delta)\norm{u}_{H^{-\delta}}^{\frac{1-a}{1+\delta}}\norm{u}_{H^{1}}^{\frac{a+\delta}{1+\delta}}.
	\end{equation*}
	Thus, according  to \cite[Theorem 7]{simon1986}, we have $K^{\epsilon}$ is relatively compact in $L^{q}(0,T; H^{a})$, where $q=q(a,\delta,p)$ satisfies 
	\begin{equation*}
		\frac{1}{q}=\frac{1-a}{p(1+\delta)}+\frac{a+\delta}{2(1+\delta)},\ \ \ \delta>0,\ \ p<\theta+1.
	\end{equation*}
	This means that $K^{\epsilon}$ is relatively compact in $L^{q}(0,T; H^{a})$, for every $q<q(a)$, where 
	\begin{equation*}
		q(a)=\frac{2(\theta+1)}{2+(\theta-1)a},\ \ \ a\in[0,1),
	\end{equation*}
	so that
	$\big(\mathcal{L}(\rho_{\mu_{k}})\big)_{k\in\mathbb{N}}$ is tight in $X_{1}(a)$.
	This, together with the tightness of $\big(\mathcal{L}(\rho_{\mu_{k}})\big)_{k\in\mathbb{N}}$ in $X_2(\delta)$ proved above, completes the proof of part (1).

	Now let us assume that the condition (\ref{sigma_poly}) holds. Thanks to (\ref{est4}) we know that
	\begin{equation*}
		\lim_{\mu\to0}\mathbb{E}\norm{\mu\partial_{t}u_{\mu}}_{L^{\infty}([0,T];H)}=0.
	\end{equation*}
	Since $L^\infty(0,T;H)\subset L^q(0,T;H^{-\delta})$, for every $q<\infty$ and $\delta >0$, we can proceed as in the proof of part (1), and we have that $\big(\mathcal{L}(\rho_{\mu_{k}})\big)_{k\in\mathbb{N}}$ is tight in $L^{p}([0,T];H^{-\delta})$ for every $p<\infty$ and $\delta>0$.
	Finally, since 
	\begin{equation*}
		\sup_{\mu\in[0,\mu_{T}]}\mathbb{E}\norm{\rho_{\mu}}_{L^{2}([0,T];H^{1})}^{2}<\infty,
	\end{equation*}
	by using the same argument as in the proof of part (1), we have that for every $a\in[0,1)$, $\big(\mathcal{L}(\rho_{\mu_{k}})\big)_{k\in\mathbb{N}}$ is tight in $L^{q}([0,T];H^{a})$, where $q=q(a,\delta,p)$ satisfies
	\begin{equation*}
		\frac{1}{q}=\frac{1-a}{p(1+\delta)}+\frac{a+\delta}{2(1+\delta)},\ \ \ \delta>0,\ \ p<\infty.
	\end{equation*}
	This implies that $\big(\mathcal{L}(\rho_{\mu_{k}})\big)_{k\in\mathbb{N}}$ is tight in $L^{q}([0,T];H^{a})$, for every $q<2/a$, and the proof of part (2) follows.
	
\end{proof}

\subsection{The limiting problem}
Here we will prove the existence and uniqueness of solutions for the following equation
\begin{equation*}
	\left\{\begin{array}{l}
		\displaystyle{\gamma(u(t,x))\partial_t u(t,x)= \Delta u(t,x) +f(u(t,x)) -\frac{\gamma'(u(t,x))}{2\gamma^2(u(t,x))} \sum_{i=1}^\infty |[\sigma(u(t,\cdot)Qe_i](x)|^2}\\[18pt]
		\displaystyle{\ \ \ \ \ \ \ \ \ \ \ \ \ \ \ \ \ \ \ \ \ \ \ \ \ \ \ \ +\sigma(u(t,\cdot))\partial_t w^Q(t,x),}\\[18pt]
		\displaystyle{u(0,x)=u_0(x), \ \ \ \ \ \ \ u(t,0)=u(t,L)=0.}
	\end{array}\right.
\end{equation*}
To this purpose, we shall first study the following quasilinear parabolic equation
\begin{equation}\label{limit_para}
	\left\{
	\begin{array}{l}
		\displaystyle{\partial_{t}\rho(t,x)=\text{div}\big[b(\rho(t,x))\nabla\rho(t,x)\big]+f_{g}(x,\rho(t,x))+\sigma_{g}(\rho(t,\cdot))dw^{Q}(t,x)}\\[10pt]
		\displaystyle{\rho(0,x)=g(u_{0}(x))},\ \ \ \ \rho(t,0)\rho(t,L)=0.
	\end{array}\right.
\end{equation}	

\begin{Definition}
	An $(\mathcal{F}_{t})_{t\geq0}$ adapted process $\rho\in L^{2}(\Omega;L^{2}(0,T;H^{1}))$ is  a solution of equation \eqref{limit_para} if for every test function $\psi\in C^{\infty}_{0}([0,L])$
	\begin{equation}
		\begin{array}{l}
			\displaystyle{\inner{\rho(t),\psi}_{H}
				=\inner{g(u_{0}),\psi}_{H}+\int_{0}^{t}\inner{b(\rho(s))\nabla\rho(s),\nabla\psi}_{H}ds}\\[10pt]
			\displaystyle{\ \ \ \ \ \ \ \ \ \ \ \ \ \ \ \ \ \ \ \ \ \ \ \ +\int_{0}^{t}\inner{F_{g}(\rho(s)),\psi}_{H}ds+\int_{0}^{t}\inner{\sigma_{g}(\rho(s))dw^{Q}(s),\psi}_{H}.}
		\end{array}
	\end{equation}
\end{Definition}

\begin{thm}\label{well-posedness-para}
	Under Hypotheses \ref{Hypothesis1}, \ref{Hypothesis2} and \ref{Hypothesis3}, there is a unique solution $\rho$ to equation \eqref{limit_para} such that for every $p<\infty$	\begin{equation}\label{sm3000}
		\mathbb{E}\sup_{t\in[0,T]}\norm{\rho(t)}_{H}^{p}+\int_{0}^{T}\mathbb{E}\norm{\rho(t)}_{H^{1}}^{2}dt<\infty.  
	\end{equation}
\end{thm}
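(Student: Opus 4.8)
The plan is to cast equation \eqref{limit_para} as a stochastic evolution equation in the variational (Gelfand triple) framework $H^1\subset H\subset H^{-1}$, with drift $A(\rho):=\mathrm{div}[b(\rho)\nabla\rho]+F_g(\rho)$ and diffusion $B(\rho):=\sigma_g(\rho)$, and then to invoke the theory of SPDEs with locally monotone coefficients developed in \cite{liu}. The essential observation is that, in contrast with the deterministic controlled problem of Theorem \ref{teo-det}, here the noise enters through the genuine It\^o integral $\int_0^t\sigma_g(\rho(s))\,dw^Q(s)$ rather than through the control $Q\varphi$, so the drift $A$ no longer carries the troublesome $\varphi$-dependent term and the operator fits directly into the framework of \cite[Theorem 1.1]{liu}.

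First I would verify the four structural conditions, all in their $\varphi$-free form. Hemicontinuity of $A$ follows exactly as in the proof of Theorem \ref{teo-det}. Local monotonicity is the $\varphi$-free version of \eqref{sm114}: combining the monotonicity \eqref{sm109} of $F_g$, the Lipschitz bound \eqref{sm112} on $b$, and the Lipschitz bound \eqref{sm113} on $\sigma_g$, one obtains
\[
\langle A(\rho_1)-A(\rho_2),\rho_1-\rho_2\rangle_H+\|B(\rho_1)-B(\rho_2)\|^2_{\mathcal{L}_2(H_Q,H)}\leq c\big(1+\|\rho_2\|^2_{H^1}\big)\|\rho_1-\rho_2\|^2_H,
\]
which is precisely the local monotonicity of \cite{liu} with control function $\rho\mapsto c(1+\|\rho\|^2_{H^1})$. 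Coercivity is the $\varphi$-free version of \eqref{115}: using \eqref{sm108} and \eqref{sm112} for the drift and \eqref{sm4} (recall $\sigma_g=\sigma\circ g^{-1}$ with $g^{-1}$ Lipschitz) for the diffusion,
\[
2\langle A(\rho),\rho\rangle_H+\|B(\rho)\|^2_{\mathcal{L}_2(H_Q,H)}\leq -\frac{2}{\gamma_1}\|\rho\|^2_{H^1}-c\,\|\rho\|^{\theta+1}_{L^{\theta+1}}+c\big(1+\|\rho\|^2_H\big).
\]
Finally, \eqref{sm120}--\eqref{sm121} supply the boundedness/growth condition of \cite{liu}, with the superlinear $L^{\theta+1}$ part of $A$ dominated by the coercive $-\|\rho\|^{\theta+1}_{L^{\theta+1}}$ term. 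With these in hand, \cite[Theorem 1.1]{liu} yields a unique solution $\rho\in L^2(\Omega;C([0,T];H))\cap L^2(\Omega\times[0,T];H^1)$, and uniqueness also follows from the local monotonicity via a stopping-time plus Gronwall argument, exactly as in Step 4 of the proof of Theorem \ref{well-posedness}.

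It then remains to upgrade the second moment to the bound \eqref{sm3000} for every $p<\infty$. For this I would work at the Galerkin level (projecting onto $H_n=\mathrm{span}\{e_1,\dots,e_n\}$ as in Theorem \ref{teo-det}) and apply the It\^o formula to $(1+\|\rho_n(t)\|^2_H)^{p/2}$. The coercivity above renders the drift contribution nonpositive up to a term of order $(1+\|\rho_n\|^2_H)^{p/2}$, while the martingale term is controlled by the Burkholder--Davis--Gundy inequality: the resulting factor $\mathbb{E}\big(\sup_{s\leq t}(1+\|\rho_n(s)\|^2_H)^{p/2}\int_0^t\|\sigma_g(\rho_n(s))\|^2_{\mathcal{L}_2(H_Q,H)}\,ds\big)^{1/2}$ is split by Young's inequality into $\delta\,\mathbb{E}\sup_{s\leq t}(1+\|\rho_n(s)\|^2_H)^{p/2}$, absorbed into the left-hand side, plus a term controlled through the linear growth \eqref{sm4} of $\sigma_g$. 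Gronwall then gives $\sup_n\mathbb{E}\sup_{t\in[0,T]}\|\rho_n(t)\|^p_H<\infty$, and, together with the already-established bound $\mathbb{E}\int_0^T\|\rho_n\|^2_{H^1}\,dt\leq c_T$, lower semicontinuity and Fatou's lemma yield \eqref{sm3000} in the limit. The hard part will be the verification of the \emph{generalized} coercivity and growth hypotheses of \cite{liu} in a form that simultaneously accommodates the superlinear $L^{\theta+1}$ growth of $F_g$ and the merely \emph{linear} growth of $\sigma_g$: one must check that the $L^{\theta+1}$ growth of the drift is genuinely absorbed by the coercive $-\|\rho\|^{\theta+1}_{L^{\theta+1}}$ term rather than by the $H^1$ coercivity, and that the noise contribution $\|\sigma_g(\rho)\|^2_{\mathcal{L}_2(H_Q,H)}\lesssim 1+\|\rho\|^2_H$ stays subordinate to $-\|\rho\|^2_{H^1}$; in the higher-moment step the Burkholder--Davis--Gundy constant depends on $p$, so the Young splitting must be arranged to leave a coefficient strictly below one on the absorbed supremum.
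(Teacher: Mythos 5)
Your proposal is correct and follows essentially the same route as the paper: the paper likewise recasts \eqref{limit_para} variationally with drift $\mathcal{S}(\rho)=\text{div}[b(\rho)\nabla\rho]+F_g(\rho)$ and diffusion $\sigma_g$, verifies hemicontinuity, local monotonicity, coercivity and growth using \eqref{sm108}, \eqref{sm109}, \eqref{sm112}, \eqref{sm113} and the interpolation bound $\Vert F_g(\rho)\Vert_{H^{-1}}\leq c\,(1+\Vert\rho\Vert_{H^1}\Vert\rho\Vert_H^{\theta-1})$, and then invokes the locally monotone SPDE theorem --- which in the paper's bibliography is \cite[Theorem 1.1]{liu2010} (Liu--R\"ockner), not the deterministic \cite{liu} you cite --- applied with $V=H^1$, $\alpha=2$, $\beta=2(\theta-1)$. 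The only difference is that the paper reads the full bound \eqref{sm3000} directly off that theorem, whereas you re-derive the $p$-th moments via a Galerkin/It\^o/Burkholder--Davis--Gundy argument; that extra step is harmless, and arguably prudent, since the basic form of the quoted theorem yields only second moments.
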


\begin{proof}	If we define the operator $\mathcal{S}: H^{1}\to H^{-1}$ by setting
	\begin{equation*}
		\mathcal{S}(\rho):=\text{div}\big[b(\rho)\nabla\rho\big]+F_{g}(\rho),\ \ \ \ \rho\in H^{1},
	\end{equation*}
	then equation \eqref{limit_para} can be written as
	\begin{equation}\label{limit_para_adj}
		\partial_{t}\rho(t,x)=\mathcal{S}(\rho(t,x))+\sigma_{g}(\rho(t,\cdot))dw^{Q}(t,x),
	\end{equation}
	with $\rho(0,\cdot)=g(u_{0}(\cdot))\in H^{1}$ and $\rho(t,0)=\rho(t,L)=0$.
		
	According to \cite[Theorem 1.1]{liu2010}, the well-posedness result follows once we prove that for all $\rho, \rho_{1}, \rho_{2}\in H^{1}$ the following conditions hold
	\begin{enumerate}
	\item[1.] {\em Hemicontinuity:}  The map $s\mapsto \inner{\mathcal{S}(\rho_{1}+s\rho_{2}),\rho}_{H^{-1},H^{1}}$ is continuous on $\mathbb{R}$;
	\item[2.] {\em Local monotonicity:}
	\begin{equation*}
		\begin{aligned}	
			&\inner{\mathcal{S}(\rho_{1})-\mathcal{S}(\rho_{2}),\rho_{1}-\rho_{2}}_{H^{-1},H^{1}}+\norm{\sigma_{g}(\rho_{1})-\sigma_{g}(\rho_{2})}_{\mathcal{L}_{2}(H_{Q},H)}^{2}\\
			&\ \ \ \ \ \ \ \ \ \ \ \ \ \ \ \leq -\frac{1}{\gamma_{1}}\norm{\rho_{1}-\rho_{2}}_{H^{1}}^{2}+c\big(1+\norm{\rho_{2}}_{H^{1}}^{2}\big)\norm{\rho_{1}-\rho_{2}}_{H}^{2}.
		\end{aligned}
	\end{equation*}
	\item[3.] {\em Coercivity:}
	\begin{equation*}
		\inner{\mathcal{S}(\rho),\rho}_{H^{-1},H^{1}}+\norm{\sigma_{g}(\rho)}_{\mathcal{L}_{2}(H_{Q},H)}^{2}\leq -\frac{1}{\gamma_{1}}\norm{\rho}_{H^{1}}^{2}-c_{2}\norm{\rho}_{L^{\theta+1}}^{\theta+1}+c\big(1+\norm{\rho}_{H}^{2}\big).
	\end{equation*}
	\item[4.] {\em Growth:}
	\begin{equation*}
		\norm{\mathcal{S}(\rho)}_{H^{-1}}^{2}\leq c\Big(1+\norm{\rho}_{H^{1}}^{2}\Big)\Big(1+\norm{\rho}_{H}^{2(\theta-1)}\Big).
	\end{equation*}

	\end{enumerate}
	In fact, the first property  is a consequence of the definition of $\mathcal{S}(\cdot)$. As for properties 2, 3 and 4, they follow from  the Lipschitz continuity of $\sigma_{g}$, properties \eqref{sm108} and \eqref{sm109} 
		and the fact that
	\begin{equation}\label{property3}
		\norm{F_{g}(\rho)}_{H^{-1}}\leq c\Big(1+\norm{\rho}_{L^{\theta}}^{\theta}\Big)\leq C\Big(1+\norm{\rho}_{H^{1}}\norm{\rho}_{H}^{\theta-1}\Big).
	\end{equation}

	Therefore, by applying Theorem 1.1 in \cite{liu2010} (with $V=H^{1}$, $H=H$, $\rho(\cdot)=c\norm{\cdot}_{H^{1}}^{2}, \alpha=2$ and $\beta=2(\theta-1)$), we can conclude that there exists a unique solution $\rho$ to equation \eqref{limit_para_adj} and \eqref{sm3000} holds.

\end{proof}

By proceeding as in the proof of \cite[Theorem 7.1]{CX}, the well-posedness of problem \eqref{limit_para} implies the well-posedness of problem \eqref{SPDE3}
\begin{cor}\label{well-posedness-limit}
	For every $T>0$ and $p<\infty$, and every $u_{0}\in H^{1}$, there exists a unique solution $u$ to the limiting problem \eqref{SPDE3} and 
	\begin{equation*}
		\mathbb{E}\sup_{t\in[0,T]}\norm{u(t)}_{H}^{p}+\int_{0}^{T}\mathbb{E}\norm{u(t)}_{H^{1}}^{2}dt<\infty.
	\end{equation*}
	\end{cor}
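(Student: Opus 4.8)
The plan is to transfer the well-posedness of the quasilinear equation \eqref{limit_para} for $\rho$, established in Theorem \ref{well-posedness-para}, to the equation \eqref{SPDE3} for $u$ through the change of unknown $u = g^{-1}(\rho)$. Since $\gamma \geq \gamma_0 > 0$, the inverse $g^{-1}$ is well defined, of class $C^2$ (because $g \in C^2$ with $g' = \gamma \geq \gamma_0$), and both $g^{-1}$ and its first derivative $(g^{-1})' = 1/(\gamma \circ g^{-1}) = b$ are bounded and Lipschitz. Consequently, if $\rho$ is the unique solution to \eqref{limit_para} satisfying \eqref{sm3000}, then $u := g^{-1}(\rho)$ inherits the required regularity: from $|g^{-1}(r)| \leq |r|/\gamma_0$ and $\nabla u = b(\rho)\nabla\rho$ one gets $\norm{u(t)}_{H} \leq \gamma_0^{-1}\norm{\rho(t)}_{H}$ and $\norm{u(t)}_{H^1} \leq \gamma_0^{-1}\norm{\rho(t)}_{H^1}$, so that the bound in the statement follows at once from \eqref{sm3000}.

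Next I would derive the equation satisfied by $u$ by applying the It\^o formula to the nonlinear transformation $\phi := g^{-1}$. Writing $\phi'(\rho) = b(\rho) = 1/\gamma(u)$ and $\phi''(\rho) = -\gamma'(u)/\gamma^3(u)$, and using the martingale part $\sigma_g(\rho)\,dw^Q = \sigma(u)\,dw^Q$ of \eqref{limit_para}, the second-order It\^o term produces precisely the extra drift in \eqref{SPDE3}. Indeed, since $b(\rho)\nabla\rho = \gamma(u)^{-1}\nabla g(u) = \nabla u$, one has $\text{div}\,[b(\rho)\nabla\rho] = \Delta u$ and $F_g(\rho) = F(u)$, so that, after collecting the It\^o correction $\tfrac12\phi''(\rho)\sum_i|[\sigma(u)Qe_i]|^2$ and multiplying through by $\gamma(u)$, one recovers exactly
\[\gamma(u)\partial_t u = \Delta u + f(u) - \frac{\gamma'(u)}{2\gamma^2(u)}\sum_{i=1}^\infty |[\sigma(u)Qe_i]|^2 + \sigma(u)\partial_t w^Q,\]
which is \eqref{SPDE3}. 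For uniqueness I would run the argument in reverse: if $u$ solves \eqref{SPDE3}, then an application of the It\^o formula to $g(u)$, whose quadratic correction (with $g'' = \gamma'$) cancels the extra drift, shows that $\rho := g(u)$ is a solution of \eqref{limit_para}; the uniqueness part of Theorem \ref{well-posedness-para} then forces $\rho$, hence $u = g^{-1}(\rho)$, to be unique.

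The main obstacle is the rigorous justification of the It\^o formula for the nonlinear map $g^{-1}$ applied to $\rho$, because $\rho$ is only a variational solution with $\partial_t \rho \in L^2(0,T;H^{-1})$, so the chain rule producing the correction term is not immediate. Following the approach of \cite[Theorem 7.1]{CX}, I would regularize $\rho$ in both the time and space variables, apply the finite-dimensional It\^o formula to the mollified processes where all terms are classical, and then pass to the limit using the a priori bounds \eqref{sm3000} together with the Lipschitz continuity of $b$, $\gamma'$ and $\sigma$. The deterministic terms converge by the $H^1$ regularity of $\rho$, while the stochastic integral and its quadratic variation are controlled through the Burkholder--Davis--Gundy inequality combined with Hypothesis \ref{Hypothesis1}.
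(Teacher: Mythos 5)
Your proposal is correct and follows essentially the same route as the paper, which disposes of this corollary in one line by invoking the argument of \cite[Theorem 7.1]{CX}: transfer well-posedness from equation \eqref{limit_para} to equation \eqref{SPDE3} via $u=g^{-1}(\rho)$, with the It\^o correction for $g^{-1}$ (respectively $g$) producing (respectively cancelling) the extra drift, and the moment bound inherited from \eqref{sm3000} through the bounds $|g^{-1}(r)|\leq \gamma_0^{-1}|r|$ and $b\leq \gamma_0^{-1}$. Your explicit computation of $\phi''(\rho)=-\gamma'(u)/\gamma^3(u)$, the reverse argument for uniqueness, and the mollification step to justify the It\^o formula are exactly the ingredients of the cited proof.
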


%%%%%%%%%%%%%%%%%%%%%%%%%%%%%%%%%%%%%

\subsection{Proof of Theorem \ref{sm}}
For every sequence $\{\mu_{k}\}_{k\in\mathbb{N}}\subset(0,\mu_{T})$ converging to $0$ as $k\to\infty$, we denote 
\begin{equation*}
	u_{k}:=u_{\mu_{k}}\ \ \ \text{and}\ \ \ \rho_{k}:=g(u_{k}),\ \ \ k\in\mathbb{N}.
\end{equation*}
In view of the first part of Proposition \ref{sm_tightness}, if we define 
\begin{equation*}
	X_{1}:=\bigcap_{0\leq a<1}X_{1}(a)\ \ \ \text{and}\ \ \ X_{2}:=\bigcap_{\delta>0}X_{2}(\delta),
\end{equation*}
we have that the family
\begin{equation*}
	\big\{\mathcal{L}(\rho_{k},\ \mu_k\partial_t u_{k})\big\}_{k \in\,\mathbb{N}} \subset \mathcal{P}\Big(\big(X_{1}\cap X_{2}\big)\times L^{2}(0,T;H)\Big)
\end{equation*}
is tight. 

We denote by $\rho$ a weak limit point for the sequence $\{\rho_k\}_{k \in\,\mathbb{N}}$ and we denote
\begin{equation*}
	\mathcal{K}:=\big(X_{1}\cap X_{2}\big)\times L^{2}(0,T;H)\times C([0,T],U),
\end{equation*}
where $U$ is the Hilbert space such that the embedding $H_{Q}\subset U$ is Hilbert-Schmidt.
According to the Skorokhod Theorem there exist  random variables
\begin{equation*}
	\mathcal{Y}=(\hat{\rho}, 0, \hat{w}^Q),\ \ \ \ \ \ \ \ 
	\mathcal{Y}_k=\left( \hat{\rho}_k,\hat{\theta}_k,\hat{w}^Q_k  \right),\ \ \ \ k \in\,\mathbb{N},
\end{equation*}
defined on a probability space $(\hat{\Omega}, \hat{\mathcal{F}}, \{\hat{\mathcal{F}}_t\}_{t \in\,[0,T]}, \hat{\mathbb{P}})$, such that
\begin{equation*}
	\mathcal{L}(\mathcal{Y})=\mathcal{L}(\rho, 0,w^Q),\ \ \ \ \ \ \ \ \ \ \mathcal{L}(\mathcal{Y}_k)=\mathcal{L}(\rho_{k},\ \mu_k\partial_t u_{k},w^Q),\ \ \ \ \ k \in\,\mathbb{N},
\end{equation*}
and such that
\begin{equation}\label{converge1}
	\lim_{k\to\infty}\mathcal{Y}_k=\mathcal{Y} \ \ \text{in}\ \ \mathcal{K},\ \ \ \ \hat{\mathbb{P}}\text{-a.s.}
\end{equation}
In particular, 
\begin{equation}\label{converge2}
	\lim_{k\to\infty}\Big(\norm{\hat{\rho}_{k}-\hat{\rho}}_{L^{p}([0,T];H)}+\norm{\hat{\rho}_{k}-\hat{\rho}}_{L^{q}([0,T];C([0,L]))}\Big)=0,\ \ \ \hat{\mathbb{P}}\text{-a.s.}
\end{equation}
for every $p<\theta+1$ and every $q<4(\theta+1)/(\theta+3)$.
By proceeding as in the proof of   \cite[Theorem 7.1]{CX}, thanks to Corollary \ref{well-posedness-limit},  in order to prove Theorem \ref{sm}, it is sufficient to show that $\hat{\rho}$ solves the parabolic equation \eqref{limit_para}.

For every $k\in\mathbb{N}$ and $\psi\in C^{\infty}_{0}([0,L])$, we have
\begin{equation}\label{weak_form}
	\begin{aligned}
		\inner{\hat{\rho}_{k}(t)+\hat{\theta}_{k}(t),\psi}_{H}
		&=\inner{g(u_{0})+\mu_{k}v_{0},\psi}_{H}-\int_{0}^{t}\inner{b(\hat{\rho}_{k}(s))\nabla\hat{\rho}_{k}(s),\nabla\psi}_{H}ds\\
		& +\int_{0}^{t}\inner{F_{g}(\hat{\rho}_{k}(s)),\psi}_{H}ds+\int_{0}^{t}\inner{\sigma_{g}(\hat{\rho}_{k}(s))d\hat{w}_{k}^{Q}(s),\psi}_{H},\ \ \hat{\mathbb{P}}\text{-a.s.}
	\end{aligned}
\end{equation}
Since $\hat{\rho}_{k}+\hat{\theta}_{k}$ converges to $\hat{\rho}$ in $L^{2}(0,T;H)$, $\hat{\mathbb{P}}$-a.s., we have
\begin{equation}\label{key1}
	\lim_{k\to\infty}\int_0^t\inner{\hat{\rho}_{k}(s)+\hat{\theta}_{k}(s),\psi}_{H}\,ds=\int_0^t\inner{\hat{\rho}(s),\psi}_{H}\,ds,\ \ \ \ \ t \in\,[0,T],\ \ \ \ \hat{\mathbb{P}}\text{-a.s.}
\end{equation}
As in the proof of \cite[Theorem 7.1]{CX}, we have that
\begin{equation}
\label{key2}
	\lim_{k\to\infty}\ \sup_{t \in\,[0,T]}\left|\int_{0}^{t}\inner{b(\hat{\rho}_{k}(s))\nabla\hat{\rho}_{k}(s),\nabla\psi}_{H}ds-\int_{0}^{t}\inner{b(\hat{\rho}(s))\nabla\hat{\rho}(s),\nabla\psi}_{H}ds\right|=0,\ \ \hat{\mathbb{P}}\text{-a.s.}
\end{equation}
Next, as \eqref{sm15-l1} extends to $F_g$, for each $k\in\mathbb{N}$,
\begin{equation*}
	\begin{aligned}
		&\Big\lvert \int_{0}^{t}\inner{F_{g}(\hat{\rho}_{k}(s))-F_{g}(\hat{\rho}(s)),\psi}_{H}\Big\rvert\\
		&\leq c\norm{\psi}_{H^{1}}\int_{0}^{t}\Big(1+\norm{\hat{\rho}_{k}(s)}_{L^{2(\theta-1)}}^{\theta-1}+\norm{\hat{\rho}(s)}_{L^{2(\theta-1)}}^{\theta-1}\Big)\norm{\hat{\rho}_{k}(s)-\hat{\rho}(s)}_{H}ds\\
		&\leq c\norm{\psi}_{H^{1}}\Big(1+\norm{\hat{\rho}_{k}}_{L^{q(\theta-1)}([0,T];C([0,L])}^{\theta-1}+\norm{\hat{\rho}}_{L^{q(\theta-1)}([0,T];C([0,L])}^{\theta-1}\Big)\norm{\hat{\rho}_{k}-\hat{\rho}}_{L^{p}([0,T];H),}
	\end{aligned}
\end{equation*}
for any $p$, $q$ satisfying 
\begin{equation}\label{condition_pq}
	\frac{4(\theta+1)}{7+2\theta-\theta^{2}}<p<\theta+1,\ \ \ \ \frac{1}{p}+\frac{1}{q}=1.
\end{equation}
One can  check that for any pair $p,q>1$ satisfying \eqref{condition_pq}, it holds $q(\theta-1)<4(\theta+1)/(\theta+3)$. Then thanks to \eqref{converge2}, we have
\begin{equation}
\label{key3}
	\lim_{k\to\infty}
\sup_{t \in\,[0,T]}\left|\int_{0}^{t}\inner{F_{g}(\hat{\rho}_{k}(s)),\psi}_{H}ds-\int_{0}^{t}\inner{F_{g}(\hat{\rho}(s)),\psi}_{H}ds\right|=0,\ \ \ \hat{\mathbb{P}}\text{-a.s.}
\end{equation}
Finally,  since
\begin{equation*}
	\lim_{k\to\infty}\sup_{t\in[0,T]}\norm{\hat{w}_{k}^{Q}(t)-\hat{w}^{Q}(t)}_{U}=0,\ \ \ \mathbb{P}\text{-a.s.}
\end{equation*}
and
\begin{equation*}
	\lim_{k\to\infty}\norm{\hat{\rho}_{k}-\hat{\rho}}_{L^{2}([0,T];H)}=0,\ \ \ \mathbb{P}\text{-a.s.}
\end{equation*}
with the uniform estimate
\begin{equation*}
	\sup_{k\in\mathbb{N}}\ \mathbb{E}\sup_{t\in[0,T]}\norm{\hat{\rho}_{k}(t)}_{H}^{2}<\infty,
\end{equation*}
by \cite[Corollary 4.5]{gyongy} we have that
\begin{equation}\label{key4}
	\lim_{k\to\infty}\sup_{t \in\,[0,T]}\left|\int_{0}^{t}\inner{\sigma_{g}(\hat{\rho}_{k}(s))d\hat{w}_{k}^{Q}(s),\psi}_{H}-\int_{0}^{t}\inner{\sigma_{g}(\hat{\rho}(s))d\hat{w}^{Q}(s),\psi}_{H}\right|=0,\ \ \ \ \text{in probability}.
\end{equation}

Therefore, combining \eqref{key1}--\eqref{key4}, if we integrate  with respect to time  both sides of equation \eqref{weak_form} and take the limit as $k\to\infty$, it follows that for every $\psi\in C^{\infty}_{0}([0,L])$ and $t \in\,[0,T]$,
\begin{equation*}
	\begin{aligned}
	\int_0^t\inner{\hat{\rho}(s),\psi}_{H}\,ds
		&=\int_0^t\left[\inner{g(u_{0}),\psi}_{H}-\int_{0}^{s}\inner{b(\hat{\rho}(r))\nabla\hat{\rho}(r),\nabla\psi}_{H}dr\right.\\
		& \ \ \left.+\int_{0}^{s}\inner{F_{g}(\hat{\rho}(r)),\psi}_{H}dr+\int_{0}^{s}\inner{\sigma_{g}(\hat{\rho}(r))d\hat{w}^{Q}(r),\psi}_{H}\right]\,ds,\ \ \hat{\mathbb{P}}\text{-a.s.}
	\end{aligned}
\end{equation*}
Due to the arbitrariness of $t \in\,[0,T]$, this means that $\hat{\rho}\in L^{2}(\Omega;X_{1}\cap X_{2}\cap L^{2}(0,T;H^{1}))$ solves equation \eqref{limit_para} with initial data $u_{0}$, and the first part of the theorem is proved. 

We omit the proof of the second part as it is analogous to the one we have just seen.

%%%%%%%%%%%%%%%%%%%%%%%%%%%%%%%%%%%%%%%%%%%

\end{document}